\newtheorem{inizio}{Lemma}[section]
\newtheorem{theorem}[inizio]{Theorem}
\newtheorem{corollary}[inizio]{Corollary}
\newtheorem{proposition}[inizio]{Proposition}
\newtheorem{lemma}[inizio]{Lemma}
\newtheorem{conjecture}[inizio]{Conjecture}
\newtheorem*{theorem*}{Theorem}
\newtheorem*{theoremA}{Theorem A}
\newtheorem*{theoremB}{Theorem B}
\newtheorem*{theoremC}{Theorem C}
\newtheorem*{conjectureD}{Conjecture D}
\theoremstyle{definition}
\newtheorem{definition}[inizio]{Definition}
\newtheorem{example}[inizio]{Example}
\newtheorem{remark}[inizio]{Remark}
\renewcommand{\to}{\longrightarrow}
\title{On factoriality of threefolds with isolated
singularities}
\author{Francesco Polizzi, Antonio Rapagnetta and Pietro Sabatino}
\date{}
\begin{document}

\maketitle

\begin{abstract}
We investigate the existence of complete intersection threefolds $X
\subset \mathbb{P}^n$ with only isolated, ordinary multiple points
and we provide some sufficient conditions for their factoriality.
\end{abstract}

\Footnotetext{{}}{\textit{2010 Mathematics Subject Classification}:
14J30, 14J70}

\Footnotetext{{}} {\textit{Keywords}: complete intersection
threefold, isolated singularity, factoriality.}


\section{Introduction}
Grothendieck-Lefschetz's Theorem (\cite[Expos$\acute{\textrm{e}}$
XII, Corollaire 3.7]{SGA2}, \cite[Chapter IV, Corollary
3.3]{Ha70}, \cite[Corollary 2.3.4]{bs:av}) says that if $X$ is an
effective, ample divisor of a smooth variety $Y$, defined over a
field of characteristic $0$, then the restriction map of Picard
groups
\begin{equation*}
\textrm{Pic}\,Y \to \textrm{Pic}\,X
\end{equation*}
is injective if $\dim X \geq 2$ and is an isomorphism if $\dim X
\geq 3$. One might ask what happens, with the same hypotheses, to
the restriction map $\textrm{CH}^1(Y)\rightarrow \textrm{CH}^1(X)$
between rational equivalence classes of codimension $1$
subvarieties. Under some mild assumptions on the singularities of
$X$ (e.g. if $X$ is normal) this is equivalent to asking whether
or not the conclusions of Grothendieck-Lefschetz's Theorem for
Picard groups remain true for the restriction map
\begin{equation} \label{eq:cl}
\textrm{Cl}\,Y \to \textrm{Cl}\,X,
\end{equation}
where, as usual, $\textrm{Cl}\,X$ denotes the \emph{class group} of
$X$, namely the group of linear equivalence classes of Weil
divisors. When $X$ is smooth there is nothing new to say, since the
groups $\textrm{Pic}\,X$ and $\textrm{Cl}\,X$ are isomorphic;
however, when $X$ is singular the problem becomes a delicate one.

We will restrict ourselves to the case where $Y \subset
 \mathbb{P}^n$ $(n \geq 4)$ is a smooth, complete intersection fourfold
 and $X \subset Y$ is a threefold with isolated singularities. Since $X$ is
projectively normal and nonsingular in codimension $1$, the map
\eqref{eq:cl} is an isomorphism precisely when
$\textrm{Pic}\,X=\textrm{Cl}\,X = \mathbb{Z}$, generated by the
class of $\mathcal{O}_X(1)$. This is in turn equivalent to the
fact that the homogeneous coordinate ring of $X$ is a UFD, or that
any hypersurface in $X$ is the complete intersection of $X$ with a
hypersurface of $\mathbb{P}^n$. In this case we say that $X$ is
\emph{factorial}.

 In the recent years, the study of factoriality of threefolds in
 $\mathbb{P}^4$ having only ordinary double points (``nodes") has attracted the
 attention of several authors. In particular the following result was
 conjectured, and proven in a weaker form, by Ciliberto and Di Gennaro
 (\cite{CdG04}). The proof of the general case is due to Cheltsov (\cite{Ch10},
 \cite{ch-sb}).
\begin{theorem*}
 Let $X \subset \mathbb{P}^4$ be a nodal threefold of degree $d$, and set
 $k=|\emph{Sing}(X)|$.
 If $k < (d-1)^2$ then $X$ is factorial, whereas when $k=(d-1)^2$ then $X$
 is factorial if
 and only if the nodes are not contained in a plane. Moreover, if
 the nodes are contained in  a plane $\pi$, then necessarily $\pi \subset X$
 and this explains the lack of factoriality in this case.
\end{theorem*}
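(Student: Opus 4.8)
First I would reduce factoriality of $X$ to the vanishing of its \emph{defect}, then compute the defect cohomologically, and finally prove the resulting combinatorial statement about the node set $\Sigma:=\operatorname{Sing}(X)$ by a Cayley--Bacharach argument; the delicate part will be a sharp bound together with its equality case. For the reduction: since $X\subset\mathbb P^4$ is an effective ample divisor on the smooth variety $\mathbb P^4$ and $\dim X=3$, the Grothendieck--Lefschetz theorem recalled above gives $\operatorname{Pic}X=\mathbb Z\cdot\mathcal O_X(1)$; as $X$ is a normal hypersurface with only ordinary double points, the local class group $\operatorname{Cl}\mathcal O_{X,p}$ at each node is $\mathbb Z$ (the affine cone over $\mathbb P^1\times\mathbb P^1$), so the natural sequence
\begin{equation*}
0\longrightarrow\operatorname{Pic}X\longrightarrow\operatorname{Cl}X\longrightarrow\bigoplus_{p\in\Sigma}\operatorname{Cl}\mathcal O_{X,p}\cong\mathbb Z^{k}
\end{equation*}
shows that $\operatorname{Cl}X/\operatorname{Pic}X$ is free of some rank $\delta=\delta(X)$, the \emph{defect}, and $X$ is factorial if and only if $\delta(X)=0$. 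I would then invoke the classical Hodge-theoretic computation of the defect of a nodal threefold in $\mathbb P^4$ (Clemens, Werner, Dimca; in the form used here, \cite{CdG04}), which identifies it with the failure of $\Sigma$ to impose independent conditions on hypersurfaces of degree $2d-5$:
\begin{equation*}
\delta(X)=\dim\operatorname{coker}\!\big(H^0(\mathbb P^4,\mathcal O(2d-5))\longrightarrow H^0(\mathcal O_\Sigma)\big)=h^1(\mathbb P^4,\mathcal I_\Sigma(2d-5)).
\end{equation*}
So the statement becomes: (A) if $k<(d-1)^2$ then $\Sigma$ imposes independent conditions on $|\mathcal O_{\mathbb P^4}(2d-5)|$; (B) if $k=(d-1)^2$ it fails to do so precisely when $\Sigma$ lies on a plane; and (C) if $\Sigma$ lies on a plane $\pi$ and $k=(d-1)^2$ then $\pi\subset X$.

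I would treat (C) first, as it is the most geometric. Suppose $\pi\not\subset X$; then $C:=\pi\cap X$ is a plane curve of degree $d$, and since $f$ vanishes to order at least $2$ at each node $p\in X$, its restriction $f_C$ vanishes to order at least $2$ at $p$, so every node of $X$ lying on $\pi$ is a point of multiplicity $\ge 2$ of $C$. Take two general polar curves $P,P'$ of $C$; they have degree $d-1$ and contain $\operatorname{Sing}(C)$, and when $C$ is reduced they have no common component, so $\Sigma\subseteq P\cap P'$ gives $k\le(d-1)^2$, with equality only if $\Sigma$ equals the complete intersection $P\cap P'$ of type $(d-1,d-1)$ in $\pi\cong\mathbb P^2$. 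In the equality case $f_C$ would be a form of degree $d$ vanishing to order $2$ at these $(d-1)^2$ points, which for $d\ge 3$ forces $f_C=0$ (since $d<2(d-1)$, no form of degree $d$ can vanish to order two along a $(d-1,d-1)$ complete intersection), a contradiction; hence $\pi\subset X$. (The case in which $C$ has a multiple component is dealt with by a separate argument.) Conversely, if $\pi=\{x_3=x_4=0\}\subset X$ and $k=(d-1)^2$, write $f=x_3A+x_4B$ with $\deg A=\deg B=d-1$; then $\Sigma$ contains $V(A|_\pi)\cap V(B|_\pi)$, a complete intersection of type $(d-1,d-1)$ in $\pi$, and since $2(d-1)-3=2d-5$ the Cayley--Bacharach property makes these $(d-1)^2$ points fail to impose independent conditions on forms of degree $2d-5$; thus $\delta(X)\ge 1$, which is the ``explanation'' for the loss of factoriality.

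For (A) and (B) I would argue by contradiction with Cayley--Bacharach. If $\Sigma$ fails to impose independent conditions on $|\mathcal O_{\mathbb P^4}(2d-5)|$, then some nonempty subset $\Sigma'\subseteq\Sigma$ has the Cayley--Bacharach property $\operatorname{CB}(2d-5)$: every hypersurface of degree $2d-5$ through $\Sigma'\setminus\{p\}$ passes through $p$. The crucial extra input is that $\Sigma\subseteq\operatorname{Sing}(X)$ is cut out scheme-theoretically by the five partials $\partial_i f$, of degree $d-1$; intersecting $X$ with two general members of this Jacobian system produces a curve through $\Sigma$, and the interplay of the $\operatorname{CB}(2d-5)$ relation with the degree-$(d-1)$ forms through $\Sigma'$ --- analysed through the $h$-vector of $\Sigma'$, or equivalently by a Riemann--Roch estimate on that curve section --- forces $|\Sigma'|\ge(d-1)^2$, hence $k\ge(d-1)^2$. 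When $k=(d-1)^2$ every inequality used must be an equality, and reading off the equality case identifies $\Sigma$ with a complete intersection of type $(d-1,d-1)$ contained in a $\mathbb P^2$; in particular $\Sigma$ is planar, which together with (C) closes the circle.

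The main obstacle is the sharp estimate in the previous paragraph: extracting exactly the constant $(d-1)^2$, and recognising the extremal configuration as a planar $(d-1,d-1)$ complete intersection, out of the interaction between the Cayley--Bacharach level $2d-5$ and the degree $d-1$ of the Jacobian equations of $\Sigma$. This is the step where the geometry of the singular locus of a degree-$d$ hypersurface in $\mathbb P^4$ is used essentially, and where the full theorem goes beyond the weaker numerical bound of \cite{CdG04} and requires the finer analysis of \cite{Ch10} and \cite{ch-sb}.
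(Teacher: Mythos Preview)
The paper does not prove this theorem. It is stated in the Introduction as background, attributed to Ciliberto--Di Gennaro (weaker form) and Cheltsov (full statement), and is then used only as motivation for the paper's own results (Theorems A--C and Conjecture D). There is therefore no proof in the paper to compare your proposal against.

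As to the proposal itself: your reduction of factoriality to $\delta(X)=0$ via Jaffe's sequence and the defect formula $\delta(X)=h^1(\mathbb P^4,\mathcal I_\Sigma(2d-5))$ is exactly the standard framework (the paper records the latter formula in the Remark after Proposition~\ref{proposition:factorial}), and your treatment of (C) is essentially the argument in Example~\ref{example:fact-2}. The genuine content of the theorem, however, is the sharp bound in (A) together with its equality case (B), and here your proposal is not a proof but a pointer: you invoke ``the $h$-vector of $\Sigma'$, or equivalently a Riemann--Roch estimate'' and then acknowledge that this step ``requires the finer analysis of \cite{Ch10} and \cite{ch-sb}''. That is precisely where the difficulty lies. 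The Cayley--Bacharach/position argument you sketch is in spirit the Ciliberto--Di Gennaro approach, which only yielded the weaker bound; Cheltsov's proof of the sharp statement uses substantially different tools (in \cite{Ch10} an inequality coming from the theory of log canonical singularities and multiplier ideals, and in \cite{ch-sb} a separate argument for the boundary case). Your outline does not supply the mechanism that upgrades the naive counting to the sharp constant $(d-1)^2$, so as written it has a real gap at the decisive step.
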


In the present paper we deal with the situation where the
singularities involved are not necessarily nodes but, more
generally, ordinary $m$-ple points with $m \geq 2$. Here ``ordinary"
means that the corresponding tangent cone is a cone over a smooth
surface in $\mathbb{P}^3$. We first prove the following
Lefschetz-type result, see Theorem \ref{theorem:principal}.

\begin{theoremA}
Let $Y \subseteq \mathbb{P}^n$ be a smooth, complete intersection
fourfold and let $X \subseteq Y$ be a reduced and irreducible
threefold which is the intersection of $Y$ with a hypersurface of
  $\mathbb{P}^n$.
  Suppose that the singular locus $\Sigma$ of  $X$ consists of isolated,
  ordinary
  multiple points and denote by $\widetilde{X}\subset \widetilde{Y}$ the
  strict transform of
  $X$ in the blowing-up $\widetilde{Y}:=\mathrm{Bl}_{\Sigma}Y$ of
  $Y$ at $\Sigma$.
If $\widetilde{X}$ is ample in $\widetilde{Y}$, then $X$ is
factorial.
\end{theoremA}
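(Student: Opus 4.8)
The plan is to push the Grothendieck--Lefschetz isomorphism from the smooth pair $(\widetilde Y,\widetilde X)$ down to $X$ through the resolution $\pi:=\sigma|_{\widetilde X}\colon \widetilde X\to X$ induced by the blow-up $\sigma\colon\widetilde Y\to Y$. Write $\Sigma=\{p_1,\dots,p_k\}$, let $E_i\cong\mathbb P^3$ be the exceptional divisor of $\sigma$ over $p_i$, and let $m_i=\operatorname{mult}_{p_i}X$. First I would record the geometry of the resolution: since $Y$ is smooth and $\Sigma$ is a finite reduced set, $\widetilde Y$ is smooth; and since the projectivized tangent cone of $X$ at each $p_i$ is a smooth surface $S_i\subset\mathbb P^3$ (this is the meaning of ``ordinary multiple point''), a local computation in the blow-up charts shows that the strict transform $\widetilde X$ is smooth, that $\pi$ is an isomorphism over $X\setminus\Sigma$, and that for each $i$ the scheme-theoretic intersection $e_i:=\widetilde X\cap E_i$ is a reduced, irreducible divisor of $\widetilde X$, namely a copy of $S_i$ sitting inside $E_i=\mathbb P^3$ as the degree-$m_i$ hypersurface cut out by the leading form of a local equation of $X$ at $p_i$. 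Finally, being a complete intersection threefold, $X$ is Cohen--Macaulay and regular in codimension one, hence normal, so its class group $\operatorname{Cl}X$ is defined and $\mathcal O_X(1)$ is the class of a Cartier divisor on it.

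Next I would compute $\operatorname{Pic}\widetilde Y$. Since $Y$ is a smooth complete intersection of dimension $4$, it is standard (a consequence of the Grothendieck--Lefschetz theorem) that $\operatorname{Pic}Y=\mathbb Z\,\mathcal O_Y(1)$; combining this with the standard description of the Picard group of the blow-up of a smooth variety at finitely many points, $\operatorname{Pic}\widetilde Y=\sigma^{*}\operatorname{Pic}Y\oplus\bigoplus_{i=1}^{k}\mathbb Z\,E_i$, we obtain that $\operatorname{Pic}\widetilde Y$ is free with basis $\{\sigma^{*}\mathcal O_Y(1),E_1,\dots,E_k\}$. Now comes the step where the hypothesis is used: $\widetilde X$ is an ample effective divisor on the smooth projective fourfold $\widetilde Y$ and $\dim\widetilde X=3$, so by the Grothendieck--Lefschetz theorem the restriction map $\operatorname{Pic}\widetilde Y\to\operatorname{Pic}\widetilde X$ is an isomorphism. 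Restricting the above basis, and using the first step to identify $\sigma^{*}\mathcal O_Y(1)|_{\widetilde X}=\pi^{*}\mathcal O_X(1)=:\mathcal O_{\widetilde X}(1)$ and $E_i|_{\widetilde X}=[e_i]$, we conclude that $\operatorname{Pic}\widetilde X$ is \emph{free with basis} $\{\mathcal O_{\widetilde X}(1),[e_1],\dots,[e_k]\}$.

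It then remains to descend to $X$. Since $\widetilde X$ is smooth, $\operatorname{Cl}\widetilde X=\operatorname{Pic}\widetilde X$. Because $\pi$ is an isomorphism over $U:=X\setminus\Sigma$, whose complement $\Sigma$ has codimension $3$ in the normal variety $X$, restriction of divisors induces isomorphisms $\operatorname{Cl}X\cong\operatorname{Cl}U\cong\operatorname{Cl}\bigl(\widetilde X\setminus\bigcup_i e_i\bigr)$, and the excision sequence for class groups on the smooth variety $\widetilde X$ gives an exact sequence
\begin{equation*}
\bigoplus_{i=1}^{k}\mathbb Z\,[e_i]\;\longrightarrow\;\operatorname{Cl}\widetilde X\;\longrightarrow\;\operatorname{Cl}X\;\longrightarrow\;0 .
\end{equation*}
Thus $\operatorname{Cl}X=\operatorname{Pic}\widetilde X/\langle[e_1],\dots,[e_k]\rangle$, and under this identification $\mathcal O_{\widetilde X}(1)=\pi^{*}\mathcal O_X(1)$ is sent to $\mathcal O_X(1)$. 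By the previous paragraph we are quotienting a free abelian group by the direct summand spanned by the $[e_i]$, so $\operatorname{Cl}X=\mathbb Z\,\mathcal O_X(1)$; since $\mathcal O_X(1)$ is a line bundle this forces $\operatorname{Pic}X=\operatorname{Cl}X=\mathbb Z\,\mathcal O_X(1)$, which is precisely the factoriality of $X$.

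The step I expect to be the main obstacle is the first one: establishing, by an explicit local analysis of the blow-up of an ordinary $m$-ple point, that $\widetilde X$ is smooth and meets each exceptional $\mathbb P^3$ in a single reduced irreducible divisor isomorphic to the base of the tangent cone — this is what guarantees that the only ``new'' divisor classes appearing on $\widetilde X$ are the $[e_i]$, so that nothing survives the quotient besides $\mathcal O_X(1)$. Everything afterwards is formal, with the ampleness of $\widetilde X$ entering only through the Grothendieck--Lefschetz isomorphism for the pair $(\widetilde Y,\widetilde X)$. One should also take a little care that the $[e_i]$ really form part of a $\mathbb Z$-basis of $\operatorname{Pic}\widetilde X$, rather than merely being independent, which is why the exact direct-sum description of $\operatorname{Pic}\widetilde Y$ — and not just the injectivity of $\sigma^{*}$ — is needed.
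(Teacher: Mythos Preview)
Your proof is correct and takes a genuinely different, more direct route than the paper. The paper argues cohomologically: it first establishes (via the Leray spectral sequence for $\pi\colon\widetilde X\to X$, using $H^{\mathrm{odd}}(\mathcal E_i,\mathbb C)=0$ and $H^5(X,\mathbb C)=0$) that $b_4(\widetilde X)=b_4(X)+k$; then the ampleness of $\widetilde X$ enters through the \emph{cohomological} Lefschetz theorem to give $b_2(\widetilde X)=b_2(\widetilde Y)=1+k$, whence by Poincar\'e duality $b_4(X)=1$; finally, a separate proposition shows that $b_4(X)=1$ forces factoriality, via a somewhat delicate hyperplane-section argument (pass to a smooth surface section $X'$, use torsion-freeness of $H^2(X',\mathbb Z)/H^2(\mathbb P^{n-1},\mathbb Z)$ and the exponential sequence to get linear equivalence, then lift back using vanishing of $H^1$ of the ideal sheaf). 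You instead invoke the \emph{Picard-group} version of Grothendieck--Lefschetz for $(\widetilde Y,\widetilde X)$ and descend through the excision sequence for $\operatorname{Cl}$, which bypasses both the spectral sequence and the $b_4=1\Rightarrow$factorial step entirely. Your argument is shorter and stays within divisor theory; the paper's approach has the side benefit of actually computing $b_4(X)$ and isolating the general criterion ``$b_4(X)=1\Rightarrow X$ factorial'', which is of independent interest even though its converse fails for non-nodal singularities. The one point you flag as delicate --- that each $e_i$ is a single reduced irreducible divisor --- is indeed needed, and follows exactly as you say from smoothness and connectedness of the degree-$m_i$ surface $S_i\subset\mathbb P^3$ (Lefschetz for $m_i\ge 2$).
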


The first consequence is that if $X$  has ``few" singularities,
which are all ordinary points, then $X$ is factorial. More
precisely, we have the following result, see Theorem
\ref{theorem:few-points}.

\begin{theoremB}
Let $Y \subset \mathbb{P}^n$ be a smooth, complete intersection
fourfold and $X \subset Y$  be a reduced, irreducible threefold,
which is complete intersection of $Y$ with a hypersurface of degree
$d$. Assume that the singular locus of $X$ consists of $k$ ordinary
multiple points $p_1,  \ldots, p_k$ of multiplicity $m_1, \ldots,
m_k$. If
\begin{equation} \label{eq:linear-bound-1-gen-int}
\sum_{i=1}^k m_i < d
\end{equation}
then $X$ is factorial.
\end{theoremB}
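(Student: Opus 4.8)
The plan is to derive Theorem B from Theorem A by checking that, under the hypothesis \eqref{eq:linear-bound-1-gen-int}, the strict transform $\widetilde{X}\subset\widetilde{Y}=\mathrm{Bl}_{\Sigma}Y$ is ample. Write $\pi\colon\widetilde{Y}\to Y$ for the blow-up and $E_i=\pi^{-1}(p_i)\cong\mathbb{P}^3$ for the exceptional divisors; since $Y$ is smooth, $\mathcal{O}_{\widetilde{Y}}(E_i)|_{E_i}\cong\mathcal{O}_{\mathbb{P}^3}(-1)$. Put $L:=\pi^{*}\mathcal{O}_{Y}(1)$ and $e:=\sum_{i=1}^{k}m_i$. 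Because $X\in|\mathcal{O}_Y(d)|$ has an ordinary $m_i$-ple point at $p_i$, one has $\pi^{*}X=\widetilde{X}+\sum_i m_i E_i$, hence
\begin{equation*}
\widetilde{X}\;\equiv\;dL-\sum_{i=1}^{k}m_i E_i\;=\;(d-e)\,L\;+\;\sum_{i=1}^{k}m_i\,(L-E_i),
\end{equation*}
with $d-e\geq 1$ by hypothesis.

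The key point is that every summand on the right is base-point-free. Indeed $L$ is the pull-back of the very ample $\mathcal{O}_Y(1)$, and $\mathcal{O}_{\widetilde{Y}}(L-E_i)$ is globally generated because its sections are the pull-backs of the hyperplane sections of $Y$ through $p_i$: saying that $\mathcal{O}_Y(1)$ is very ample is exactly saying that $|\mathcal{O}_Y(1)\otimes\mathcal{I}_{p_i}|$ separates $p_i$ from every other point of $Y$ and separates the tangent directions at $p_i$, i.e.\ that $L-E_i$ has no base point on $\widetilde{Y}$. Consequently $\mathcal{O}_{\widetilde{Y}}(\widetilde{X})$, being a tensor product of globally generated line bundles, is globally generated and defines a morphism $\psi\colon\widetilde{Y}\to\mathbb{P}^N$ with $\widetilde{X}=\psi^{*}\mathcal{O}_{\mathbb{P}^N}(1)$. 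I would then invoke the standard fact that a globally generated line bundle is ample if and only if the associated morphism is finite, equivalently if and only if it has positive degree on every irreducible curve; so it suffices to show $\widetilde{X}\cdot C>0$ for every irreducible curve $C\subset\widetilde{Y}$, and this is where $e<d$ enters. If $C\subseteq E_{i_0}$ then $L\cdot C=0$ and $(L-E_{i_0})\cdot C=-E_{i_0}\cdot C=\deg C>0$, whence $\widetilde{X}\cdot C=m_{i_0}\deg C>0$. If $C$ lies in no $E_i$, then $\pi(C)$ is a curve of degree $\delta\geq1$, so $L\cdot C=\delta$, while $(L-E_i)\cdot C\geq0$ since $L-E_i$ is nef; hence $\widetilde{X}\cdot C\geq(d-e)\,\delta\geq1>0$. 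Thus $\widetilde{X}$ is ample, and Theorem A gives the factoriality of $X$.

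In this argument the only step that requires genuine thought — and the one I would single out as the main obstacle — is producing the decomposition of $\widetilde{X}$ into base-point-free pieces: it is precisely this that allows one to replace a full Nakai--Moishezon verification on all subvarieties of the fourfold $\widetilde{Y}$ by the single, elementary positivity statement on curves, into which the numerical hypothesis $\sum_i m_i<d$ feeds directly.
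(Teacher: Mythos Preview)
Your argument is correct and is essentially identical to the paper's own proof: the paper decomposes $\widetilde{X}=dH-\sum_i m_iE_i=(d-\sum_i m_i)H+\sum_i m_i(H-E_i)$, observes that each summand is base-point free, invokes the criterion that a globally generated line bundle is ample iff it is positive on every curve, and checks positivity in the two cases $C\subset E_i$ and $C\not\subset E_i$ exactly as you do.
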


We also give a different proof of Theorem B in the case $X \subset
\mathbb{P}^4$ and $k=1$ (see the Appendix), because we find it of
independent interest.

Theorem B provides the first factoriality criterion for complete
intersection threefolds in $\mathbb{P}^n$ with ordinary
singularities. Previously, only results for \emph{nodal}
threefolds in $\mathbb{P}^4$ and $\mathbb{P}^5$ were known
(\cite{Ch10}, \cite{kosta:ci}).

When $X \subset \mathbb{P}^4$ and the inequality
\eqref{eq:linear-bound-1-gen-int} is not satisfied, we can still
give a factoriality criterion provided that the singularities of $X$
are in general position and they all have the same multiplicity. In
fact, using Theorem A together with a result of Ballico (Theorem
\ref{theorem:ballico}), we deduce the following result, see Theorem
\ref{theorem:worstthannodes}.

\begin{theoremC}
Let $\Sigma:=\{p_1,\ldots, p_k \}$ be a set of $k$ distinct,
  general points in $\mathbb{P}^4$ and let $d, \, m$ be
  positive integers with $d \geq m$.
\begin{itemize}
\item[$\boldsymbol{(i)}$] If
  \begin{equation}
    \bigg \lfloor \frac{d+5}{m+4} \bigg \rfloor^4 > k
\end{equation}
  then there exists a hypersurface $X \subset \mathbb{P}^4$ of degree $d$,
  with
  $k$ ordinary $m$-ple points at $p_1,\ldots,p_k$ and no other
  singularities. \\
\item[$\boldsymbol{(ii)}$]  If the stronger condition
  \begin{equation}
   \min \bigg\{ \bigg \lfloor \frac{d+5}{m+4}  \bigg \rfloor^4, \, \bigg
   \lfloor \frac{d}{m}
   \bigg \rfloor^4 \bigg\} > k
\end{equation}
  holds, then any  hypersurface $X$ as in part $\boldsymbol{(i)}$
  is factorial.
\end{itemize}
\end{theoremC}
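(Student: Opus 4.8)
The plan is to treat the two parts separately, building part $(i)$ on Ballico's existence result and part $(ii)$ on the Lefschetz-type criterion of Theorem A.

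For part $(i)$, consider the linear system $|\mathcal{O}_{\mathbb{P}^4}(d)|$ of degree-$d$ hypersurfaces and impose, at each point $p_i$, the vanishing of all partial derivatives up to order $m-1$; this forces each $p_i$ to be a point of multiplicity $\geq m$. First I would invoke Theorem \ref{theorem:ballico} (Ballico): for $k$ general points, the expected dimension count is attained and the generic member of the resulting linear subsystem $\mathcal{L}$ is smooth away from $\Sigma$ and has exactly an \emph{ordinary} $m$-ple point at each $p_i$, provided the arithmetic condition $\lfloor (d+5)/(m+4)\rfloor^4 > k$ holds. Concretely, one chooses the $p_i$ to lie on a suitable grid so that the "product" hypersurface — a union of $\lfloor (d+5)/(m+4)\rfloor$ hyperplanes in general position through appropriate subsets — already exhibits ordinary multiple points of the right multiplicity at the prescribed points; semicontinuity then propagates ordinariness and the absence of further singularities to the general member. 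This yields the desired $X$.

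For part $(ii)$, let $X$ be any hypersurface as produced in part $(i)$, with $k$ ordinary $m$-ple points $p_1,\dots,p_k$ at general positions and no other singularities. By Theorem A, it suffices to show that the strict transform $\widetilde{X}$ in $\widetilde{Y}:=\mathrm{Bl}_\Sigma \mathbb{P}^4$ is ample. Writing $H$ for the pullback of $\mathcal{O}_{\mathbb{P}^4}(1)$ and $E_i$ for the exceptional divisor over $p_i$, one has $\widetilde{X} \sim dH - m\sum_{i=1}^k E_i$ (since $X$ has multiplicity exactly $m$ at each ordinary point). I would then prove ampleness of this class on the blow-up of $\mathbb{P}^4$ at $k$ general points. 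The additional hypothesis $\lfloor d/m\rfloor^4 > k$ is exactly what is needed here: it guarantees, again by a Ballico-type genericity statement (or directly by exhibiting enough hypersurfaces of degree $\lfloor d/m \rfloor$ through subsets of the points), that $dH - m\sum E_i$ is globally generated and in fact very ample — equivalently, that the $k$ general points impose independent conditions in a strong enough way on hypersurfaces of degree $\lfloor d/m\rfloor$, so that the corresponding multiple-point linear system separates points and tangent vectors on $\widetilde{Y}$. Once $\widetilde{X}$ is ample, Theorem A gives factoriality of $X$ immediately.

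The main obstacle is the ampleness verification in part $(ii)$: controlling the Seshadri-type positivity of $dH - m\sum E_i$ at $k$ \emph{general} points and extracting precisely the numerical threshold $\lfloor d/m\rfloor^4 > k$. This requires the right genericity input — presumably the same circle of ideas behind Theorem \ref{theorem:ballico}, applied now not to produce a singular $X$ but to produce enough sections of $\mathcal{O}_{\mathbb{P}^4}(\lfloor d/m\rfloor)$ vanishing at prescribed subsets of the $p_i$ — so that one can write $\widetilde{X}$ as a sum of a very ample class and an effective (indeed nef) class, or directly apply a Nakai–Moishezon check on $\widetilde{Y}$. Part $(i)$, by contrast, is essentially a citation of Ballico combined with a semicontinuity argument, and should be routine.
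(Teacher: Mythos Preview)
Your approach to part~$(ii)$ is essentially the paper's: since $\widetilde{X}\sim dH-m\sum_i E_i$, Corollary~\ref{corollary:amplenessbound} (the immediate consequence of Ballico's Theorem~\ref{theorem:ballico}) gives ampleness of $\widetilde{X}$ on $\widetilde{Y}$ as soon as $\lfloor d/m\rfloor^4>k$, and Theorem~A then yields factoriality. No Seshadri estimates, very-ampleness, or separate Nakai--Moishezon checks are needed; this part is a one-line citation and you are overcomplicating it.

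Part~$(i)$, however, has a genuine gap. First, Ballico's theorem is an \emph{ampleness} criterion for divisors on a blow-up, not an existence statement for hypersurfaces with prescribed singularities, so it cannot be ``invoked'' as you describe. Second, and more seriously, your product-of-hyperplanes construction cannot produce \emph{ordinary} singularities: the tangent cone at a point lying on $m$ of the hyperplanes is a union of $m$ hyperplanes in $\mathbb{P}^3$, which is reducible and hence not the cone over a smooth surface. Ordinariness is indeed an open condition, but semicontinuity only helps once you exhibit a single member of the system with smooth tangent cone at every $p_i$, and you have not done so.

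The paper's argument for $(i)$ is quite different and explains where the constant $\lfloor(d+5)/(m+4)\rfloor$ actually comes from. One works on $\widetilde{Y}$, where $K_{\widetilde{Y}}=-5H+3\sum E_i$. The hypothesis $\lfloor(d+5)/(m+4)\rfloor^4>k$ (and the automatically weaker $\lfloor(d+4)/(m+3)\rfloor^4>k$) makes both $(d+5)H-(m+4)\sum E_i$ and $(d+4)H-(m+3)\sum E_i$ ample via Corollary~\ref{corollary:amplenessbound}; Kodaira vanishing then kills $H^1\big(dH-(m+1)\sum E_i\big)$ and $H^1\big((d-1)H-m\sum E_i\big)$. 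Two restriction sequences now show that $|dH-m\sum E_i|$ restricts to the \emph{complete} linear system on a general $H$ and on each $E_j\cong\mathbb{P}^3$. The former gives base-point-freeness, so Bertini yields a smooth irreducible $\widetilde{X}$; the latter is the point you are missing, because it forces $\widetilde{X}\cap E_j$ to be a \emph{general} surface of degree $m$ in $\mathbb{P}^3$, hence smooth---and this is exactly the statement that $p_j\in X:=\eta_*\widetilde{X}$ is an ordinary $m$-ple point.
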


Using Theorem C one can easily provide new examples of singular,
factorial projective varieties.

In the last part of the paper we construct some non-factorial
threefolds $X \subset \mathbb{P}^4$ of degree $d$ with only
 $k$ isolated, ordinary $m$-ple points as singularities. In all these examples
 the equality
$k(m-1)^2=(d-1)^2$ is satisfied. On the other hand, in
\cite{Sab04} it is proven that if the singular locus of $X $
consists of $k_2$ ordinary double points and $k_3$ ordinary triple
points and if $k_2 + 4 k_3 < (d -1)^2$, then any smooth surface
contained in $X$ is a complete intersection in $X$. Motivated by
this fact, we make the following conjecture, which generalizes the
results of Ciliberto, Di Gennaro and Cheltsov.

\begin{conjectureD}
Let $X \subset \mathbb{P}^4$ be a hypersurface of degree $d$ whose
singular locus consists of $k$ ordinary multiple points $p_1,
\ldots, p_k$ of multiplicity $m_1, \ldots, m_k$. If
\begin{equation*}
\sum_{i=1}^k (m_i-1)^2 < (d-1)^2
\end{equation*}
then $X$ is factorial.
\end{conjectureD}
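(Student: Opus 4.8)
We sketch the approach we would take to prove Conjecture~D; it enhances the Lefschetz-type criterion of Theorem~A with a Hodge-theoretic computation of the defect, following the pattern of the nodal case. The plan is to pass to a resolution. Recall that $X$ is factorial if and only if $\mathrm{Cl}\,X=\mathbb{Z}\cdot\mathcal{O}_X(1)$, i.e.\ $\delta(X):=\operatorname{rk}\mathrm{Cl}\,X-1$ vanishes. Since each $p_i$ is an ordinary $m_i$-ple point, a single blow-up $\pi\colon\widetilde X\to X$, with $\widetilde X\subset\widetilde Y:=\mathrm{Bl}_{\Sigma}\mathbb{P}^4$, resolves the singularities, and the exceptional divisor $E_i\subset\widetilde X$ over $p_i$ is the smooth surface $S_i\subset\mathbb{P}^3$ of degree $m_i$ cut out by the tangent cone. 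Because $\Sigma$ has codimension $3$ in the normal threefold $X$,
\[
\mathrm{Cl}\,X\;\cong\;\mathrm{Cl}(X\setminus\Sigma)\;=\;\mathrm{Pic}\,\widetilde X\big/\big\langle[E_1],\dots,[E_k]\big\rangle ,
\]
and since $[E_1],\dots,[E_k]$ are $\mathbb{Q}$-independent and independent of $\pi^{*}\mathcal{O}_X(1)$, we get $\delta(X)=\rho(\widetilde X)-1-k\geq0$. Thus $X$ is factorial precisely when $\rho(\widetilde X)=k+1$; when $\widetilde X$ is ample in $\widetilde Y$ this is Theorem~A, so the new content is to force it under a weaker hypothesis, and the real work is in the case where $\widetilde X$ is \emph{not} ample.

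Next I would compute $\rho(\widetilde X)$ by Hodge theory. Running the exact sequence of mixed Hodge structures attached to $\pi$---using the Lefschetz hyperplane theorem for the (possibly singular) ample divisor $X\subset\mathbb{P}^4$, which gives $H^i(X,\mathbb{Q})=H^i(\mathbb{P}^4,\mathbb{Q})$ for $i<3$, together with the blow-up formula on $\widetilde Y$---one obtains a short exact sequence relating $H^2(X)$, $H^2(\widetilde X)$ and $\ker\!\big(\textstyle\bigoplus_iH^2(S_i)\to H^3(X)\big)$, hence a formula for $\rho(\widetilde X)$. Via Griffiths-type residues, $\delta(X)$ should then equal the dimension of the cokernel of a restriction map
\[
H^0\big(\mathbb{P}^4,\mathcal{O}(e)\big)\;\longrightarrow\;\bigoplus_{i=1}^{k}W_i ,
\]
where $e=e(d)$ is an adjoint degree---equal to $2d-5$ when all $m_i=2$, and in general determined by $d$ and the discrepancies of the non-crepant morphism $\pi$---and $W_i$ is a space of local invariants of the pair $(p_i,S_i)$, of dimension $(m_i-1)^2$. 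This normalization is dictated by the two solved cases: $\dim W_i=1=(2-1)^2$ for a node recovers the classical defect formula for nodal threefolds (Ciliberto--Di Gennaro \cite{CdG04}, Cheltsov \cite{Ch10}), while $\dim W_i=4=(3-1)^2$ for a triple point reproduces the weight $k_2+4k_3$ of \cite{Sab04}. Making this precise when some $m_i\geq3$ is the first technical obstacle, since $\pi$ is then no longer crepant and the vanishing cohomology of an ordinary $m$-ple point---a cone over a smooth surface of degree $m$, with $p_g>0$ once $m\geq4$---is much richer than the single vanishing cycle of a node; alternatively one might try to sidestep it with an enhancement of Theorem~A in which the ampleness of $\widetilde X$ is weakened to nef-and-big together with Kawamata--Viehweg-type vanishing on $\widetilde Y$.

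The hard part will be the combinatorial step: proving that $\sum_{i=1}^{k}(m_i-1)^2<(d-1)^2$ forces the restriction map above to be surjective, i.e.\ the length-$(m_i-1)^2$ schemes $Z_i$ at the $p_i$ to impose independent conditions on $|\mathcal{O}_{\mathbb{P}^4}(e)|$. I would argue by contradiction, imitating Cheltsov (\cite{Ch10}): if independence failed, some $Z'\subseteq\bigsqcup_iZ_i$ of colength at most $\sum(m_i-1)^2$ would already fail to impose independent conditions, and then a Cayley--Bacharach argument combined with the theory of log canonical thresholds and the connectedness of non-klt loci---applied to a pair $(\mathbb{P}^4,\lambda G)$ with $G$ a hypersurface of degree $e$ through $Z'$---would produce a positive-dimensional subvariety $V\subset\mathbb{P}^4$ of controlled degree meeting $\Sigma$ in a large weighted portion; intersecting $V$ with $\mathrm{Sing}(X)$, which is cut out by the degree-$(d-1)$ partial derivatives of the defining equation, a Bézout estimate would then contradict the hypothesis by producing the bound $(d-1)^2$. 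As in the nodal case this is the crux: the $p_i$ cannot be assumed general, the inequality is sharp, and the non-factorial examples of the last section (all with $k(m-1)^2=(d-1)^2$) are precisely the extremal configurations for which $\widetilde X$ fails to be ample, so the argument must rule these out from the intrinsic geometry of $X$ whenever the inequality is strict. In particular, a naive "degenerate from a general, ample member" strategy is hopeless, since the defect is only upper semicontinuous in families of constant singularity type.
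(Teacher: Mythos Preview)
The essential point is that the paper does \emph{not} prove Conjecture~D: it is stated explicitly as an open problem, and the authors write ``We hope to come back on this problem in a sequel to this paper.'' The only case verified in the paper is $k=1$, which follows from Theorem~\ref{theorem:few-points} (Theorem~B) since $(m_1-1)^2<(d-1)^2$ implies $m_1<d$. So there is no paper proof to compare your proposal against.

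As for your proposal itself, it is not a proof but an outline with several openly acknowledged gaps. Two of these are substantial. First, the defect formula you propose---that $\delta(X)$ equals the corank of a restriction map $H^0(\mathbb{P}^4,\mathcal{O}(e))\to\bigoplus_i W_i$ with $\dim W_i=(m_i-1)^2$---is asserted by analogy with the nodal case but not established; once $m_i\geq 3$ the resolution $\pi$ is not crepant and the exceptional surface $S_i$ has $h^{2,0}(S_i)>0$ for $m_i\geq 4$, so the mixed-Hodge computation you sketch does not obviously collapse to a single adjoint degree $e$ and local spaces of the predicted dimension. Second, even granting such a formula, the ``combinatorial step'' you describe---forcing surjectivity of the restriction map under the hypothesis $\sum(m_i-1)^2<(d-1)^2$ via a Cheltsov-type log canonical threshold and connectedness argument---is precisely the heart of the conjecture, and you have only indicated the shape of an argument, not carried it out. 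In the nodal case this step already required the full force of \cite{ch-sb}; there is no reason to expect it to be easier here, and your proposal does not supply the missing estimates.

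In short: the statement is a genuine conjecture in the paper, your proposal is a reasonable strategic sketch in the spirit of the known nodal results, but it is not a proof and the paper offers none to compare it to.
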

We hope to come back on this problem in a sequel to this paper.

Let us now explain how this work is organized. In Section
\ref{section:preliminaries} we fix notation and terminology and we
collect some preliminary results that are needed in the sequel of
the paper. In Section \ref{section:factoriality} we discuss the
notion of factoriality of projective varieties and its relations
with the close concepts of local factoriality and
$\mathbb{Q}$-factoriality, providing several examples and
counterexamples.

 In Section \ref{section:lefschetz-type}
 we prove Theorem A, whereas in Section \ref{section:applications}
 we present some of its consequences, including Theorem B and Theorem C.
 Finally, in Section \ref{section:non-factorial} we describe our
 non-factorial examples and we state Conjecture D.


\bigskip

\textbf{Acknowledgments.} F. Polizzi was supported by Progetto MIUR
di Rilevante Interesse Nazionale \emph{Geometria delle
Variet$\grave{a}$ Algebriche e loro Spazi di Moduli} and by
GNSAGA-INdAM.

A. Rapagnetta was supported by  Progetto FIRB \emph{Spazi di Moduli
e Applicazioni}.

The authors are grateful to E. Di Gennaro and F. Zucconi for useful
discussions and suggestions and to J. Starr and H. Dao for answering
some of their questions on the interactive mathematics website
MathOverflow (\verb|www.mathoverflow.net|).

\bigskip

\textbf{Notation and conventions.} We work over the field
$\mathbb{C}$ of complex numbers.

If $X$ is a projective variety and $D_1, \, D_2$ are divisors on
$X$, we write $D_1 \equiv_{\textrm{hom}} D_2$ for homological
equivalence and $D_1 \equiv_{\textrm{lin}} D_2$ for linear
equivalence.

The group of linear equivalence classes of Weil divisors on $X$ is
denoted by $\textrm{Cl}\, X$, whereas the group of linear
equivalence classes of Cartier divisors is denoted by $\textrm{Pic}
\, X$.

We write $\textrm{Sing}\, X$ for the singular locus of $X$ and
$b_k(X)$ for its $k$-th Betti number.

\section{Preliminaries} \label{section:preliminaries}

In this section we collect, for the reader's convenience, some
preliminary results that are used in the sequel of the paper. We
start by stating some versions of Lefschetz's theorem on
hyperplane sections, both for cohomology groups and for Picard
groups.

\begin{theorem} \label{teo:lefschetz}
Let $Y$ be a smooth, projective variety of dimension $n$ and $X=X_1
\cap \ldots \cap X_e \subseteq Y$ a smooth complete intersection of
effective, ample divisors on $Y$. Then the restriction map
\begin{equation*}
H^i(Y, \, \mathbb{Z}) \to H^i(X, \, \mathbb{Z})
\end{equation*}
is an isomorphism for $i \leq \dim X-1= n-e-1$, and is injective
with torsion-free cokernel for $i= \dim X=n-e$.
\end{theorem}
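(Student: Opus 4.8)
The plan is to deduce the theorem from a topological estimate on the open complement
$U:=Y\setminus \mathrm{Supp}(X)$, via Poincar\'e--Lefschetz duality. The crucial observation is that $\mathrm{Supp}(X)=\bigcap_{i=1}^{e}\mathrm{Supp}(X_i)$, so
\[
U \;=\; \bigcup_{i=1}^{e} U_i, \qquad U_i:=Y\setminus \mathrm{Supp}(X_i);
\]
since each $X_i$ is an \emph{effective ample} divisor, a suitable multiple $m_iX_i$ is very ample and exhibits $Y$ as a closed subvariety of some $\mathbb{P}^M$ with $\mathrm{Supp}(X_i)=Y\cap H$ for a hyperplane $H$, so $U_i=Y\cap(\mathbb{P}^M\setminus H)$ is a closed subvariety of the affine space $\mathbb{P}^M\setminus H$ and hence is affine. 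Thus $U$ is a smooth variety of dimension $n$ (being open in the smooth variety $Y$) that admits a covering by $e$ affine open sets, and the same holds, with the same bounds, for every finite intersection $U_{i_1}\cap\cdots\cap U_{i_j}$.

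The topological ingredient I would invoke is the generalized Andreotti--Frankel theorem: a complex variety of dimension $n$ that admits a covering by $e$ affine open sets has the homotopy type of a CW complex of real dimension $\le n+e-1$. For $e=1$ this is the classical statement that a smooth affine variety of dimension $n$ is homotopy equivalent to a CW complex of dimension $\le n$ (Morse theory for the squared distance to a point after embedding in some $\mathbb{C}^N$); the general case follows by a Mayer--Vietoris / homotopy-pushout induction on $e$, using that all the relevant intersections are again affine of dimension $\le n$, and may also simply be quoted from the stratified Morse theory literature. Applied to $U$ this gives $H_j(U;\mathbb{Z})=0$ for $j>n+e-1$, while $H_{n+e-1}(U;\mathbb{Z})$ is torsion-free, being the top homology group of a CW complex of dimension $n+e-1$.

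Next I would dualize. Being an oriented manifold of real dimension $2n$, $U$ satisfies Poincar\'e--Lefschetz duality $H^k_c(U;\mathbb{Z})\cong H_{2n-k}(U;\mathbb{Z})$; combined with the previous step this yields $H^k_c(U;\mathbb{Z})=0$ for $k\le n-e$, and shows that $H^{\,n-e+1}_c(U;\mathbb{Z})$ is torsion-free. Finally, insert this into the long exact sequence attached to the open inclusion $U\hookrightarrow Y$ and the closed inclusion $X\hookrightarrow Y$,
\[
\cdots \longrightarrow H^k_c(U) \longrightarrow H^k(Y) \longrightarrow H^k(X) \longrightarrow H^{k+1}_c(U) \longrightarrow \cdots
\]
(here $H^\bullet_c(Y)=H^\bullet(Y)$ and $H^\bullet_c(X)=H^\bullet(X)$ since $Y$ and $X$ are compact, all coefficients being $\mathbb{Z}$). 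Because $\dim X=n-e$, for $k\le \dim X-1=n-e-1$ both neighbouring terms vanish and the restriction $H^k(Y)\to H^k(X)$ is an isomorphism; for $k=\dim X=n-e$ the left-hand term vanishes, so the restriction is injective, and its cokernel is isomorphic to a subgroup of the torsion-free group $H^{\,n-e+1}_c(U)$, hence torsion-free. This is exactly the assertion; note that smoothness of $X$ is never used, only smoothness of $Y$ and the fact that $X$ is cut out by effective ample divisors.

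The step requiring the most care is the homotopy-type estimate for $U$: the divisor case $e=1$ is classical Andreotti--Frankel Morse theory, but the passage to an arbitrary number of ample hypersurfaces — equivalently, bounding the homotopy type of a union of $e$ affine opens — is the real content, which I would either establish by the Mayer--Vietoris bootstrap indicated above or cite (e.g. as a consequence of Goresky--MacPherson's stratified Morse theory). An alternative overall strategy is induction on $e$, deducing the case of $X=X_1\cap\cdots\cap X_e$ from the divisor case applied to $X\subset X_1\cap\cdots\cap X_{e-1}$; this, however, forces one to control the possibly singular partial intersections $X_1\cap\cdots\cap X_{e-1}$, which is precisely the difficulty that the complement argument circumvents.
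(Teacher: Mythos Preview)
Your argument is correct. The paper does not give its own proof of this statement but simply refers the reader to \cite[Remark 3.1.32]{lazarsfeld:p1}; your write-up is precisely the standard proof that underlies that reference (affineness of the complements $U_i=Y\setminus X_i$, the Andreotti--Frankel bound on the homotopy type of a smooth variety covered by $e$ affine opens, Poincar\'e--Lefschetz duality on the open complement $U$, and the compactly supported long exact sequence for the pair $(Y,X)$), so in content you are reproducing the cited argument rather than giving a different one. Your remark that smoothness of $X$ plays no role is accurate and is also noted in Lazarsfeld; only the smoothness of $Y$ (needed for duality on $U$) and the ampleness of the $X_i$ (needed for affineness of the $U_i$) are used.

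One small comment on presentation: for the inductive Mayer--Vietoris step it is cleaner to argue directly with homology vanishing and torsion-freeness (as you in fact indicate), rather than insisting on an actual CW model of dimension $\le n+e-1$, since the latter requires a homotopy-pushout argument that is slightly more delicate than the purely homological one, and the homological statement is all that the rest of the proof consumes.
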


\begin{proof}
See \cite[Remark 3.1.32]{lazarsfeld:p1}.
\end{proof}

\begin{theorem} \label{theorem:dimca}
Let $X \subset \mathbb{P}^n$ be a complete intersection which has
only isolated singular points. Then the restriction map
\begin{equation*}
H^i(\mathbb{P}^n, \, \mathbb{C}) \to H^i(X, \, \mathbb{C})
\end{equation*}
is an isomorphism for $\dim X +2 \leq i \leq 2 \dim X$.
\end{theorem}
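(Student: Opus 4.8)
The result goes back to Dimca's study of the topology of complete intersections, and I would reconstruct a proof as follows. Put $d=\dim X$; we may assume $d\geq 2$, since otherwise the range $d+2\leq i\leq 2d$ is empty. Let $\Sigma=\mathrm{Sing}\,X$ (a finite set of points) and $X_0=X\setminus\Sigma$ (a smooth quasi-projective variety of complex dimension $d$), and let $h\in H^2(X,\mathbb{C})$ be the restriction of the hyperplane class of $\mathbb{P}^n$. First I would dispose of injectivity: a positive-dimensional complete intersection in $\mathbb{P}^n$ is connected, and, having isolated singularities and dimension $\geq 2$, it cannot have two components meeting only along $\Sigma$ (Hartshorne's connectedness theorem for the punctured local ring), hence $X$ is irreducible and carries a fundamental class $[X]\in H_{2d}(X,\mathbb{Z})$ with $\langle h^{d},[X]\rangle=\deg X>0$; consequently $h^{k}\neq 0$ in $H^{2k}(X,\mathbb{C})$ for $0\leq k\leq d$, since $h^{d}=h^{k}\cup h^{d-k}$. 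As $H^{2k}(\mathbb{P}^n,\mathbb{C})$ is generated by the $k$-th power of the hyperplane class, which restricts to $h^{k}$, the map $H^{i}(\mathbb{P}^n,\mathbb{C})\to H^{i}(X,\mathbb{C})$ is injective for every even $i\leq 2d$ (and trivially so for odd $i$). It therefore remains to prove $b_i(X)=b_i(\mathbb{P}^n)$ for $d+2\leq i\leq 2d$.

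For the Betti numbers in the high range the plan is to pass to the smooth locus $X_0$ and use duality. From the long exact sequence $\cdots\to H^{i}_c(X_0,\mathbb{C})\to H^{i}(X,\mathbb{C})\to H^{i}(\Sigma,\mathbb{C})\to\cdots$ (using that $X$ is compact and $\Sigma$ finite, so $H^{j}(\Sigma,\mathbb{C})=0$ for $j\geq 1$) one obtains $H^{i}(X,\mathbb{C})\cong H^{i}_c(X_0,\mathbb{C})$ for $i\geq 2$; Poincaré duality on the oriented $2d$-manifold $X_0$, together with universal coefficients, then gives $H^{i}_c(X_0,\mathbb{C})\cong H^{2d-i}(X_0,\mathbb{C})^{\vee}$, so that $b_i(X)=b_{2d-i}(X_0)$ for $i\geq 2$. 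Next, excision identifies $H^{j}(X,X_0)$ with $\bigoplus_{p\in\Sigma}\widetilde{H}^{j-1}(L_p,\mathbb{C})$, where $L_p$ is the link of the isolated complete intersection singularity $p$; since such a link is $(d-2)$-connected (Milnor for hypersurfaces, Hamm in general), the long exact sequence of the pair $(X,X_0)$ yields $H^{j}(X,\mathbb{C})\cong H^{j}(X_0,\mathbb{C})$ for $0\leq j\leq d-2$. Combining, for $d+2\leq i\leq 2d$ (equivalently $0\leq 2d-i\leq d-2$) we get $b_i(X)=b_{2d-i}(X_0)=b_{2d-i}(X)$, which reduces everything to computing $b_j(X)$ for $0\leq j\leq d-2$.

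That last step is the low-degree Lefschetz theorem: $b_j(X)=b_j(\mathbb{P}^n)$ for $0\leq j\leq d-2$, which I would obtain from the Lefschetz hyperplane theorem for the local complete intersection $X$ (see \cite{lazarsfeld:p1} and the references therein) — concretely, by cutting $X$ with a general hyperplane $L$ missing $\Sigma$, so that $X\cap L\subset L=\mathbb{P}^{n-1}$ is a \emph{smooth} complete intersection of dimension $d-1$, using the lci Lefschetz theorem to identify $H^{j}(X,\mathbb{C})\cong H^{j}(X\cap L,\mathbb{C})$ for $j\leq d-2$, and finishing with Theorem \ref{teo:lefschetz} applied to $X\cap L$ in $\mathbb{P}^{n-1}$. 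Putting everything together, $b_i(X)=b_{2d-i}(X)=b_{2d-i}(\mathbb{P}^n)=b_i(\mathbb{P}^n)$ for $d+2\leq i\leq 2d$ (the last equality because $2d-i$ and $i$ have the same parity and both lie in $[0,2n]$), and together with the injectivity already noted this gives the asserted isomorphism.

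The hard part is not the homological bookkeeping — which is just excision/Mayer–Vietoris long exact sequences and Poincaré duality — but the two geometric facts it rests on: the $(d-2)$-connectedness of the link of an isolated complete intersection singularity, and the validity of the Lefschetz hyperplane theorem for the a priori singular complete intersection $X$. Both are classical but non-trivial; in a write-up one could alternatively simply quote Dimca's \emph{Singularities and Topology of Hypersurfaces} for the whole statement.
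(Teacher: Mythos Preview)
Your argument is sound. The chain of reductions --- injectivity via the non-vanishing of $h^k$ for $k\leq d$, the identification $H^i(X,\mathbb{C})\cong H^i_c(X_0,\mathbb{C})$ for $i\geq 2$ from the long exact sequence with compact supports, Poincar\'e duality on the smooth locus $X_0$, the $(d-2)$-connectedness of the link of an isolated complete intersection singularity (Milnor, Hamm), and a Lefschetz comparison in low degrees --- does assemble into a proof, and you have correctly isolated the two non-trivial geometric inputs on which everything rests.

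The paper, however, does not prove this statement at all: it is recorded among the preliminaries, and the entire proof reads ``See \cite[Theorem~2.11 p.~144]{dimca:sing}.'' So your proposal supplies strictly more than the paper --- an actual argument with its dependencies made explicit --- whereas the paper is content simply to quote Dimca, which is precisely the alternative you mention in your closing sentence. There is nothing to compare beyond that: you reconstructed a proof where the authors chose to cite one.
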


\begin{proof}
See \cite[Theorem 2.11 p. 144]{dimca:sing}.
\end{proof}

\begin{theorem}\label{theorem:gr-lef}
Let $X \subset \mathbb{P}^n$ be a reduced complete intersection with
$\dim X \geq 3$. Then the restriction map
\begin{equation*}
\emph{Pic} \,\mathbb{P}^n \to \emph{Pic}\,X
\end{equation*}
is an isomorphism. In particular $\emph{Pic}\,X = \mathbb{Z}$,
generated by $\mathcal{O}_X(1)$.
\end{theorem}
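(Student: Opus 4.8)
The plan is to reduce the statement to the smooth case, where it follows from the classical Grothendieck--Lefschetz theorem, by passing through a desingularization or a general hyperplane section. First I would observe that since $X\subset\mathbb P^n$ is a reduced complete intersection of dimension $\geq 3$, it is in particular normal: a complete intersection is Cohen--Macaulay, hence satisfies Serre's condition $S_2$, and being reduced it is $R_0$; but a positive-dimensional reduced complete intersection that is singular only in codimension $\geq 2$ is then regular in codimension $1$, so Serre's criterion gives normality provided $\mathrm{codim}(\mathrm{Sing}\,X)\geq 2$. Strictly speaking one should note that a reduced complete intersection is generically reduced along each component and equidimensional, so $R_0$ holds; one then checks $R_1$. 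This normality is what allows us to identify $\mathrm{Cl}\,X$ with the group of Weil divisor classes modulo linear equivalence in the usual sense and, more importantly, gives us a good theory of the Picard group.

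For the injectivity and surjectivity of $\mathrm{Pic}\,\mathbb P^n\to\mathrm{Pic}\,X$, I would invoke the local version of the Grothendieck--Lefschetz theorem on local cohomology. Concretely, write $X=H_1\cap\cdots\cap H_e$ with $H_i$ hypersurfaces, let $A$ be the affine cone over $X$ with vertex $0$, and let $\widehat A$ be the completion of the local ring of $A$ at $0$. Then $\widehat A$ is a complete intersection local ring of dimension $\dim X+1\geq 4$, and the punctured spectrum $U=\mathrm{Spec}\,\widehat A\setminus\{0\}$ has the property that $\mathrm{Pic}(U)=0$; this is exactly the content of the local Lefschetz theorem (\cite[Exposé XI]{SGA2}), valid because the depth condition $\mathrm{depth}\geq 3$ is met, a complete intersection being Cohen--Macaulay of the requisite dimension. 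Since $X\subset\mathbb P^n$ is projectively normal (again because complete intersections are), $U$ is essentially the complement of the vertex in the cone, and the vanishing $\mathrm{Pic}(U)=0$ together with the $\mathbb G_m$-action translates into the statement that every line bundle on $X$ is a power of $\mathcal O_X(1)$, i.e. $\mathrm{Pic}\,X=\mathbb Z\cdot\mathcal O_X(1)$, which is precisely the assertion that $\mathrm{Pic}\,\mathbb P^n\to\mathrm{Pic}\,X$ is an isomorphism.

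An alternative, perhaps cleaner route is purely cohomological: the exponential sequence gives $\mathrm{Pic}\,X\hookrightarrow H^2(X,\mathbb Z)$ once one knows $H^1(X,\mathcal O_X)=0$ (true for a complete intersection of dimension $\geq 2$, since then $H^i(X,\mathcal O_X(k))=0$ for $0<i<\dim X$), while $H^2(X,\mathbb Z)$ can be controlled by the Lefschetz-type statements already recorded; but since singular $X$ makes the Hodge-theoretic comparison between $H^2(X,\mathbb Z)$ and $H^1(X,\Omega^1_X)$ subtle, and since $\mathcal O_X(1)$ already generates a rank-one subgroup, the local-cohomology argument is the safest. The main obstacle is exactly this singularity issue: one must be careful that the classical Grothendieck--Lefschetz comparison, usually stated for $X$ smooth or for the ambient $Y$, applies to our singular $X$, and the correct tool for that is the statement about the local Picard group of the cone (equivalently, Call--Lyubeznik-type vanishing), which requires only the Cohen--Macaulay/complete-intersection hypothesis and the dimension bound $\dim X\geq 3$, both of which we have. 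Once $\mathrm{Pic}\,X=\mathbb Z\mathcal O_X(1)$ is established, the final sentence of the theorem is immediate.
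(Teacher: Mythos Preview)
The paper does not prove this theorem: it is stated as a preliminary and the ``proof'' consists solely of citations to \cite[Chapter IV, Corollary 3.2]{Ha70} and \cite[Expos\'e XII, Corollaire 3.7]{SGA2}. So there is no argument in the paper to compare your proposal against.

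That said, your sketch is essentially the argument one finds in \cite{SGA2}: pass to the affine cone, use that a complete intersection local ring of dimension $\geq 4$ satisfies the Lefschetz conditions (this is where the Cohen--Macaulay/depth hypothesis enters), conclude that the punctured spectrum has trivial Picard group, and translate back via the $\mathbb G_m$-action to get $\mathrm{Pic}\,X=\mathbb Z\cdot\mathcal O_X(1)$. This is correct and is the standard route.

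One small comment: your opening paragraph on normality is both unnecessary and not quite right. Normality is irrelevant for the Picard-group statement (it matters for $\mathrm{Cl}$, not $\mathrm{Pic}$), and a reduced complete intersection need not be $R_1$ in general---think of $\{xy=0\}\subset\mathbb P^4$, which is reduced, dimension $3$, and singular in codimension $1$. The theorem as stated does not assume normality and does not need it; the local-cohomology/Lefschetz argument you outline in your second paragraph goes through for any reduced complete intersection of dimension $\geq 3$, which is exactly why the references the paper cites state it at that level of generality. You could simply drop the normality discussion.
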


\begin{proof}
See \cite[Chapter IV, Corollary 3.2]{Ha70} or
\cite[Expos$\acute{\textrm{e}}$ XII, Corollaire 3.7]{SGA2}.
\end{proof}

Theorem \ref{theorem:gr-lef} admits the following generalization.
\begin{theorem} \label{theorem:gr-lef-gen}
Let $Y$ be a smooth, projective variety and $X \subset Y$ a reduced,
effective ample divisor. Then the restriction map
\begin{equation*}
\emph{Pic}\,Y \to \emph{Pic}\,X
\end{equation*}
is an isomorphism if $\dim X \geq 3$ and is injective with torsion
free cokernel if $\dim X=2$.
\end{theorem}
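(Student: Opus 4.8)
The plan is to reduce the statement to the classical Grothendieck–Lefschetz theorem (Theorem \ref{theorem:gr-lef-gen}'s own hypotheses are already abstract, but the projective-space version in Theorem \ref{theorem:gr-lef} is the prototype) by working with the full exponential/cohomology machinery, or alternatively to quote the SGA2-type statement directly. Concretely, since $X \subset Y$ is a reduced effective ample divisor on the smooth projective variety $Y$, I would first record that $X$ is a local complete intersection in $Y$ of codimension one, hence $X$ is Cohen–Macaulay and in particular has no embedded components; this is what makes the formal-functions / local-cohomology arguments of \cite{SGA2} applicable even though $X$ may be singular. The key input is the Lefschetz condition $\mathrm{Lef}(Y,X)$ (in Grothendieck's terminology): for an ample divisor $X$ on a smooth projective $Y$ of dimension $\geq 3$, the pair $(Y,X)$ satisfies the effective Lefschetz condition, which gives the equivalence between the category of vector bundles (or coherent sheaves) on a formal neighborhood of $X$ and those on $Y$.

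The main steps, in order, are as follows. First I would invoke the comparison between $\mathrm{Pic}$ of $Y$ and $\mathrm{Pic}$ of the formal completion $\widehat{Y}_{/X}$: by \cite[Exposé XI]{SGA2}, the Lefschetz condition $\mathrm{Leff}(Y,X)$ holds when $X$ is an ample divisor and $\dim Y \geq 3$, and this yields that $\mathrm{Pic}\,Y \to \mathrm{Pic}\,\widehat{Y}_{/X}$ is an isomorphism (for $\dim X \geq 3$) resp. injective with torsion-free cokernel (for $\dim X = 2$, where one only gets the condition $\mathrm{Lef}$, not the effective version, controlling $H^1$ and an injection on $H^2$). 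Second, I would compare $\mathrm{Pic}\,\widehat{Y}_{/X}$ with $\mathrm{Pic}\,X$: the kernel and cokernel of $\mathrm{Pic}\,\widehat{Y}_{/X} \to \mathrm{Pic}\,X$ are governed by $H^i(X, \mathcal{I}^n/\mathcal{I}^{n+1})$ for the ideal sheaf $\mathcal{I} = \mathcal{O}_Y(-X)$, and since $X$ is an ample divisor these conormal-type sheaves $\mathcal{O}_X(-nX)$ are ``very negative,'' so that the relevant $H^1$ and $H^2$ groups vanish by Serre vanishing after passing to the associated graded — giving that $\mathrm{Pic}\,\widehat{Y}_{/X} \to \mathrm{Pic}\,X$ is an isomorphism. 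Composing the two comparisons yields the claim; for $\dim X = 2$ one loses surjectivity at the first step but the cokernel injects into $H^2(Y,\mathbb{Z})/H^2(X,\mathbb{Z})$-type groups which are torsion-free by the Lefschetz hyperplane theorem applied to a smoothing or to a resolution, hence torsion-free cokernel.

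The main obstacle is the second comparison step in the singular case: the usual exponential-sequence argument that identifies $\mathrm{Pic}$ with a piece of $H^2(\cdot,\mathbb{Z})$ is not directly available on the singular variety $X$, so one must argue at the level of the formal scheme and control the obstruction groups $H^i(X,\mathcal{O}_X(-nX))$ uniformly in $n$. For $X$ normal this is exactly the content of \cite[Exposé XII, Corollaire 3.6]{SGA2}, and the cleanest route is simply to cite that corollary (or \cite[Chapter IV, Corollary 3.2]{Ha70}) in the generality of an ample divisor on a smooth ambient variety, noting that the proof there uses only properness, the ampleness of $X$, and the local complete intersection property — none of which require $Y$ to be projective space. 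I would therefore structure the write-up as: (1) reduce to the formal-neighborhood statement via the Lefschetz condition for ample divisors; (2) cite the relevant SGA2 corollary for the dimension bookkeeping; (3) handle the $\dim X = 2$ torsion-free cokernel assertion by the same mechanism, where only $\mathrm{Lef}$ rather than $\mathrm{Leff}$ is available.
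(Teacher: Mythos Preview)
The paper does not give a self-contained proof of this theorem at all: its entire proof reads ``See \cite[Corollary 2.3.4]{bs:av} and \cite[Example 3.1.25]{lazarsfeld:p1}.'' So there is no argument in the paper to compare your proposal against; the theorem is simply quoted from the literature as a known result.

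That said, your sketch is essentially the SGA2/Hartshorne argument that underlies those references, and the two-step plan (compare $\mathrm{Pic}\,Y$ with $\mathrm{Pic}$ of the formal completion via $\mathrm{Lef}/\mathrm{Leff}$, then compare the formal Picard group with $\mathrm{Pic}\,X$ via vanishing of $H^i(X,\mathcal{O}_X(-nX))$) is the right one. A few points where your write-up is loose: in the second step the relevant vanishing is not Serre vanishing in the usual direction but rather Kodaira-type vanishing for the negative line bundles $\mathcal{O}_X(-nX)$, and one must be a little careful since $X$ may be singular (this is handled in \cite{SGA2} and \cite{Ha70} under the stated hypotheses). More seriously, your treatment of the torsion-free cokernel when $\dim X = 2$ is imprecise: appealing to ``a smoothing or a resolution'' is not the standard route and would require justification that the Picard groups behave well under such operations. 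The usual argument (as in \cite[Example 3.1.25]{lazarsfeld:p1}) combines the integral Lefschetz hyperplane theorem for $H^2$ with the exponential sequence, and your own remark that the exponential-sequence identification is delicate on singular $X$ applies here too. If you want a self-contained account rather than a citation, you should either restrict to $X$ normal and cite \cite[Exp.~XII]{SGA2} precisely, or follow the Kodaira-vanishing route in \cite{bs:av}, but in either case the $\dim X = 2$ torsion-freeness deserves a cleaner justification than what you have written.
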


\begin{proof}
See \cite[Corollary 2.3.4]{bs:av} and \cite[Example
3.1.25]{lazarsfeld:p1}.
\end{proof}

We will also need the following ampleness criterion for the
blow-up of $\mathbb{P}^n$ at a finite number of general points.
\begin{theorem}  \label{theorem:ballico}
  Fix integers $n, \, k, \,d$ with $n\ge 2, \ d\ge 2$ and $k>0;$ if $n=2$
  assume
  $d\ge 3$. Let $p_1,\ldots,p_k \in \mathbb{P}^n$ be general points,
  denote by $\pi \colon \widetilde{\mathbb{P}}^n \to \mathbb{P}^n$ the blow-up of
  $\mathbb{P}^n$ at $p_1,\ldots, p_k$, with exceptional divisors
  $E_1, \ldots, E_k$, and set $H:= \pi^*\mathcal{O}_{\mathbb{P}^n}
  (1)$.
  Then the divisor
  \begin{equation*}
    L:=dH- \sum_{i=1}^kE_i
  \end{equation*}
  is ample in $\widetilde{\mathbb{P}}^n$ if and only if $L^n>0$, or equivalently if and only
  if $d^n> k$.
\end{theorem}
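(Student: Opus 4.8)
The plan is to prove the two implications separately; the forward direction being trivial, the entire content lies in showing that $L^n>0$ (equivalently $d^n>k$) forces $L$ to be ample. First I would dispose of the easy implication: if $L$ is ample then by the Nakai–Moishezon criterion (or simply because ample divisors have positive top self-intersection) we get $L^n>0$, and since $E_i^n=(-1)^{n-1}$ and $H\cdot E_i=0$, expanding gives $L^n=d^n-k(-1)^{n-1}\cdot(-1)^{n-1}=d^n-k$, hence $d^n>k$. For the converse I would use the Nakai–Moishezon criterion on $\widetilde{\mathbb{P}}^n$: it suffices to prove that $L^{\dim V}\cdot V>0$ for every irreducible subvariety $V\subseteq\widetilde{\mathbb{P}}^n$. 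One reduces immediately to $V\not\subset E_i$ for all $i$ (the restriction of $L$ to each $E_i\cong\mathbb{P}^{n-1}$ is $\mathcal{O}(1)$, which is ample, so the criterion is automatic on subvarieties of the exceptional divisors), so that $V$ is the strict transform $\widetilde{W}$ of an irreducible subvariety $W\subseteq\mathbb{P}^n$ of some dimension $r$, with $\mathrm{mult}_{p_i}W=:m_i$. Writing $L^r\cdot\widetilde{W}=d^r\deg W-\sum_i m_i^r\cdot(\text{const})$ after projection formula bookkeeping, the required positivity becomes a statement comparing $\deg W$ with the multiplicities of $W$ at the general points $p_i$.

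The heart of the matter is therefore the following estimate: for a general choice of $p_1,\dots,p_k$ and any irreducible $W\subseteq\mathbb{P}^n$ of dimension $r$ and degree $\delta$, the number of the $p_i$ lying on $W$, counted with the appropriate weight from the multiplicity, is controlled so that $d^r\delta>\sum_{p_i\in W}m_i^r$ whenever $d^n>k$. The key geometric input is a \emph{generality / incidence} argument: since the $p_i$ are general, a positive-dimensional $W$ of degree $\delta$ can pass through at most a bounded number of them, and more precisely one exploits that the locus of $k$-tuples of points such that some variety of bounded degree contains many of them is a proper subvariety of $(\mathbb{P}^n)^k$. I would either invoke an existing refined Bézout-type bound for the intersection of $\widetilde{W}$ with the exceptional divisors, or run an induction on $r$: slicing $W$ by a general hyperplane through an appropriate subset of the $p_i$ reduces the dimension while keeping track of degrees and multiplicities, the base case $r=0$ being the trivial fact that a point contributes $1\le d^0\cdot 1$ when it is one of the $p_i$ and contributes $d^0>0$ otherwise... — more honestly, the clean base case is $r=1$, curves, where one shows a general configuration of $k<d^n$ points imposes enough independent conditions.

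The main obstacle I expect is precisely this combinatorial-geometric control of multiplicities at general points: bounding $\sum_{p_i\in W}m_i^r$ uniformly in terms of $\deg W$ and $r$, and checking the inequality is \emph{strict} in the borderline cases (which is what the equivalence ``$L^n>0\iff d^n>k$'' forces us to track, since $L^n=d^n-k$ could be as small as $1$). The two natural tools are: (a) the theorem of Alexander–Hirschowitz / interpolation-type results guaranteeing that general points impose independent conditions on forms of degree $d$ (so that the linear system $|L|$ separates enough to be, at least, base-point free, after which a short argument upgrades base-point-freeness plus positivity of self-intersections to ampleness); or (b) a direct Nakai–Moishezon verification using Bézout. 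I would pursue route (a): first show $|L|$ is base-point free for general $p_i$ when $d^n>k$ using a dimension count on the space of degree-$d$ hypersurfaces singular-to-order-one at the $p_i$, then observe that a base-point-free $L$ with $L^{\dim V}\cdot V\ge 0$ for all $V$ and $L^n>0$ must in fact have strictly positive intersection numbers on all positive-dimensional $V$ by a semicontinuity/rigidity argument, yielding ampleness. Since the statement is attributed in the paper to Ballico, I would in the final writeup simply cite his paper for this step and present the reduction above as the conceptual skeleton.
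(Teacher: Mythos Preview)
The paper does not prove this theorem at all: its entire proof is the single line ``See \cite{ballico:ample}.'' Your closing sentence --- that in the final write-up you would simply cite Ballico's paper --- is therefore exactly what the paper does, and is the appropriate response for a preliminary result of this kind.

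That said, since you sketched an independent argument, let me flag one genuine gap in route (a). The implication ``$L$ base-point-free and $L^n>0$ $\Rightarrow$ $L$ ample'' is false in general: a big and nef divisor can contract a positive-dimensional subvariety (think of a birational morphism that is not finite). Your proposed ``semicontinuity/rigidity argument'' to upgrade $L^{\dim V}\cdot V\ge 0$ to strict positivity does not exist in the form you suggest; one really must rule out, for each $r$, the existence of an $r$-dimensional $W$ with $d^r\deg W=\sum m_i^r$, and this is exactly the hard multiplicity bound you identified as the obstacle. Ballico's actual argument is quite different in flavor: he does not run Nakai--Moishezon on arbitrary subvarieties, but instead uses induction on $n$ together with a specialization of the general points onto a hyperplane, exploiting semicontinuity of ampleness in families. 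Your reduction on the exceptional divisors (where $L|_{E_i}\cong\mathcal{O}_{\mathbb{P}^{n-1}}(1)$) and the computation $L^n=d^n-k$ are both correct and would be the first steps in any approach.
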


\begin{proof}
See \cite{ballico:ample}.
\end{proof}

\begin{corollary} \label{corollary:amplenessbound}
  Same notation as in Theorem \emph{\ref{theorem:ballico}}. If $a, \, b$
  are positive
  integers such that $\big \lfloor \frac{a}{b} \big \rfloor^n > k$, then
  the divisor
  \begin{equation*}
   L:= aH- b \sum_{i=1}^k E_i
  \end{equation*}
  is ample.
\end{corollary}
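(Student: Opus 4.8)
The plan is to reduce the statement to Theorem \ref{theorem:ballico} by writing $L$ as a positive multiple of a divisor of the form $dH - \sum E_i$ plus an effective, nef (indeed semiample) correction, so that ampleness follows from adding an ample divisor to a nef one. Concretely, set $d := \big\lfloor \frac{a}{b}\big\rfloor$, so that by hypothesis $d^n > k$ and $a = bd + r$ with $0 \le r < b$. Then
\begin{equation*}
L = aH - b\sum_{i=1}^k E_i = b\Big(dH - \sum_{i=1}^k E_i\Big) + rH.
\end{equation*}
By Theorem \ref{theorem:ballico} the divisor $dH - \sum_{i=1}^k E_i$ is ample (this is exactly where the arithmetic condition $d^n > k$ is used, and one should check the hypotheses of that theorem apply — in particular that $d \ge 2$, which holds because $d^n > k \ge 1$ forces $d \ge 2$ for $n \ge 1$; the case $n = 2$ needing $d \ge 3$ will need a separate remark, see below). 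Moreover $H = \pi^*\mathcal{O}_{\mathbb{P}^n}(1)$ is the pullback of an ample divisor under a birational morphism, hence nef, and $r \ge 0$, so $rH$ is nef. Therefore $L$ is the sum of an ample divisor (multiplied by the positive integer $b$) and a nef divisor, hence ample.

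The only genuinely delicate point is the bookkeeping on the hypotheses of Theorem \ref{theorem:ballico}, and in particular the excluded or boundary cases. If $r = 0$ then $L = b(dH - \sum E_i)$ and ampleness is immediate from Theorem \ref{theorem:ballico} together with the fact that a positive multiple of an ample divisor is ample, with no nef-plus-ample argument needed. If $n = 2$, Theorem \ref{theorem:ballico} requires $d \ge 3$; since $d^2 > k \ge 1$ only guarantees $d \ge 2$, one must observe that the argument still goes through: if $d = 2$ (so $k \le 3$ and $n=2$), one can instead write $L = b\big((d+1)H - \sum E_i\big) + (r - b)H$ after noting $a \ge bd + 0$; but a cleaner route, which I would take, is simply to note that for $n = 2$ one has $a \ge 3b$ whenever $\lfloor a/b\rfloor^2 > k \ge 1$ forces $\lfloor a/b \rfloor \ge 2$ and then treat the subcase $\lfloor a/b\rfloor = 2$ by the estimate $a \ge 2b$, rewriting $L = b(3H - \sum E_i) + (a - 3b)H$ — valid once $a \ge 3b$ — and otherwise, when $2b \le a < 3b$ with $k \le 3$, invoking directly the classical fact that the blow-up of $\mathbb{P}^2$ at $k \le 3$ general points is a del Pezzo surface on which $2H - \sum_{i=1}^k E_i$ is already ample. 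In the main applications of this corollary one has $n \ge 4$, so this two-dimensional subtlety is immaterial and could be dispatched in a single sentence.

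I would therefore present the proof as: (1) set $d = \lfloor a/b\rfloor$ and record $d^n > k$; (2) decompose $L = b(dH - \sum E_i) + rH$ with $0 \le r < b$; (3) invoke Theorem \ref{theorem:ballico} for ampleness of $dH - \sum E_i$, noting its hypotheses are met (with the one-line caveat for $n = 2$); (4) conclude using that ample plus nef is ample and that a positive integer multiple of ample is ample. The main obstacle — really the only one — is verifying that Theorem \ref{theorem:ballico} genuinely applies, i.e. that the integer $d$ we produced is $\ge 2$ (automatic) and, when $n = 2$, handling $d = 2$ separately; everything else is a formal manipulation of divisor classes.
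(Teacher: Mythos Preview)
Your proof is correct and follows essentially the same approach as the paper: both write $a = bd + r$ with $d = \lfloor a/b \rfloor$, decompose $L = b(dH - \sum E_i) + rH$, apply Theorem~\ref{theorem:ballico} to the first summand, and conclude via ample plus nef. If anything, you are more scrupulous than the paper about verifying the hypotheses $d \ge 2$ (resp.\ $d \ge 3$ when $n=2$) of Theorem~\ref{theorem:ballico}; the paper's proof simply invokes that theorem without comment, relying implicitly on the fact that the corollary is only applied later with $n=4$.
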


\begin{proof}
  Write $a=\big \lfloor \frac{a}{b} \big \rfloor \cdot b + r$, where $r$
  is an integer such that $0\le r < b$. Then we have
  \begin{equation*}
    \begin{split}
     L= aH- b\sum_{i=1}^k E_i & = \bigg(\bigg \lfloor \frac{a}{b} \bigg \rfloor
      \cdot b + r \bigg)H -b \sum_{i=1}^k E_i
      \\
      & =  b \bigg( \bigg \lfloor \frac{a}{b} \bigg \rfloor H - \sum_{i=1}^k
      E_i  \bigg)+ r
      H.
    \end{split}
  \end{equation*}
  The first summand is ample by Theorem \ref{theorem:ballico} and
  the second is nef, so $L$  is ample by \cite[Corollary 1.4.10
  p. 53]{lazarsfeld:p1}.
\end{proof}

\section{Factoriality} \label{section:factoriality}

\begin{definition}
An integral domain $A$ is called a \emph{unique factorization
domain} (abbreviated to UFD) if any element, which is neither $0$
nor a unit, factors uniquely (up to order and units) into a
product of irreducible elements.
\end{definition}

By \cite[Theorem 20.1]{Mat89}, a noetherian integral domain is a
UFD if and only if every height $1$ prime ideal is principal.
Moreover a noetherian, integrally closed domain $A$ is a UFD if
and only if $\textrm{Cl}\,A=0$, where $\textrm{Cl}\,A$ denotes the
\emph{divisor class group} of $A$, namely the group of divisorial
fractional ideals modulo the subgroup of principal fractional
ideals, see \cite[p. 165]{Mat89}.

\begin{proposition} \label{proposition:factoriality-localization}
If $A$ is a noetherian \emph{UFD} and $S \subset A$ is a
multiplicative part, then $S^{-1}A$ is a \emph{UFD}. In
particular, if $\mathfrak{p} \subset A$ is a prime ideal then the
local ring $A_{\mathfrak{p}}$ is a \emph{UFD}.
\end{proposition}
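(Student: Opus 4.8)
The plan is to invoke the criterion recalled just above, that a noetherian integral domain is a UFD if and only if every height-one prime ideal is principal. First I would dispose of the trivialities: we may assume $0 \notin S$, since otherwise $S^{-1}A$ is the zero ring. As $A$ is a noetherian domain, so is $S^{-1}A$: localization preserves noetherianity, and since $0 \notin S$ the canonical map $A \to S^{-1}A$ is injective, so $S^{-1}A$ is again a domain. Thus it remains only to check that every height-one prime of $S^{-1}A$ is principal.

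So let $\mathfrak{Q} \subset S^{-1}A$ be a prime of height one, and put $\mathfrak{q} := \mathfrak{Q} \cap A$; this is a prime ideal of $A$ with $\mathfrak{q} \cap S = \emptyset$, satisfying $\mathfrak{Q} = S^{-1}\mathfrak{q}$, and it is nonzero since $\mathfrak{Q} \neq (0)$. Choose a nonzero $x \in \mathfrak{q}$ and, using that $A$ is a UFD, write $x = u\, p_1^{a_1}\cdots p_r^{a_r}$ with $u$ a unit and each $p_i$ a prime element of $A$. Since $\mathfrak{q}$ is prime and contains $x$, it contains some $p_i$; call it $p$. Then $p \notin S$ (otherwise $\mathfrak{q} \cap S \neq \emptyset$), so $p\,S^{-1}A = S^{-1}(pA)$ is a nonzero prime ideal of $S^{-1}A$ contained in $\mathfrak{Q}$. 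As $\mathfrak{Q}$ has height one, this forces $\mathfrak{Q} = p\,S^{-1}A$, which is principal. By the criterion, $S^{-1}A$ is a UFD.

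For the ``in particular'' clause, one just notes that $A \setminus \mathfrak{p}$ is a multiplicative part of $A$ and $A_{\mathfrak{p}} = (A \setminus \mathfrak{p})^{-1}A$, so the first assertion applies directly.

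The one point that needs care — and the reason the most naive argument does not work — is that the order-preserving bijection between primes of $A$ disjoint from $S$ and primes of $S^{-1}A$ need not preserve height once the height exceeds one, so one cannot simply carry over principality of all height-one primes of $A$. Instead one must exhibit, inside the given height-one prime $\mathfrak{Q}$ of $S^{-1}A$, a principal prime arising from a genuine prime element of $A$, and then use the height hypothesis to upgrade containment to equality; this is the step carried out above, and everything else is routine.
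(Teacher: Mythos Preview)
Your proof is correct and uses the same height-one criterion as the paper, but the paper's argument is the very ``naive'' one you dismiss at the end: given a height-one prime $P\subset S^{-1}A$, write $P=S^{-1}I$ for $I=P\cap A$; since localization \emph{does} preserve height, $I$ has height one in $A$, hence is principal, and so $P=S^{-1}I$ is principal. Your route---picking a nonzero element of the contraction, factoring it, and extracting a prime element---avoids invoking height preservation and is slightly more hands-on, but the end result is the same.

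The one thing to correct is your closing claim that the prime correspondence ``need not preserve height once the height exceeds one.'' It does preserve height, at every height: if $\mathfrak{q}\subset A$ is disjoint from $S$, then every prime contained in $\mathfrak{q}$ is also disjoint from $S$, so the bijection restricts to one between primes below $\mathfrak{q}$ and primes below $S^{-1}\mathfrak{q}$, giving $\mathrm{ht}(\mathfrak{q})=\mathrm{ht}(S^{-1}\mathfrak{q})$. So the concern motivating your detour is unfounded, and the paper's shorter argument goes through without difficulty.
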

\begin{proof}
Take a height $1$ prime ideal $P \subset S^{-1}A$; then there
exists a prime ideal $I$ of $A$ such that $P=S^{-1}I$.
Localization does not change height, so $I$ has height $1$ in $A$
and since $A$ is a UFD we conclude that $I$ is principal, say
$I=\langle a \rangle$. Then $P=S^{-1}\langle a \rangle=\langle
\frac{a}{1} \rangle$, so $P$ is principal and this concludes the
proof.
\end{proof}





Let $(X, \, \mathcal{O}_X)=(\textrm{Spec} \, A, \, \widetilde{A})$
be the affine scheme associated with a commutative ring $A$; then
we write $\textrm{Pic} \, A$ in place of $\textrm{Pic} \, X$.
Assuming that $A$ is a noetherian domain with only a finite number
of maximal ideals $\mathfrak{m}_1, \ldots, \mathfrak{m}_k$ such
that $A_{\mathfrak{m}_i}$ is not a UFD, there is a short exact
sequence
\begin{equation} \label{seq:Jaffe-local}
0 \to \textrm{Pic}\, A \to \textrm{Cl} \, A \to
\textrm{Cl}(S^{-1}A) \to 0,
\end{equation}
where $S=A-\bigcup_{i=1}^k \mathfrak{m}_i$. See \cite[Chapter
V]{Fo73} for more details.

\begin{proposition} \label{prop:punctured-spectrum}
Let $(A, \, \mathfrak{m})$ be a noetherian, normal local ring with
$\dim A \geq 2$ and set $U:=\emph{Spec}\,A - \mathfrak{m}$. Then
\begin{itemize}
\item[$\boldsymbol{(i)}$] there is a monomorphism $\emph{Pic}\,U \to
\emph{Cl}\, A;$ \item[$\boldsymbol{(ii)}$] if $A_{\mathfrak{p}}$ is
a \emph{UFD} for all $\mathfrak{p} \in U$, then $\emph{Pic}\,U \to
\emph{Cl}\, A$ is an isomorphism.
\end{itemize}
\end{proposition}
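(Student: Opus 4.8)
The plan is to use the standard correspondence between Weil divisors (up to linear equivalence) on a normal scheme and Cartier divisors / invertible sheaves on its smooth (or suitably large) open locus, together with the fact that removing the closed point $\mathfrak{m}$ from $\operatorname{Spec} A$ removes a codimension $\geq 2$ subset (here we use $\dim A \geq 2$), hence does not change the class group.

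\medskip

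\noindent\textbf{Part (i).} First I would recall that for a normal noetherian domain, $\operatorname{Cl} A$ coincides with the divisor class group, and that the restriction of Weil divisors to the open set $U = \operatorname{Spec} A \setminus \{\mathfrak{m}\}$ induces an isomorphism $\operatorname{Cl} A \xrightarrow{\ \sim\ } \operatorname{Cl} U$: indeed $\mathfrak{m}$ has codimension $\dim A \geq 2$ in $\operatorname{Spec} A$, so no prime divisor of $\operatorname{Spec} A$ is supported on it, and the excision sequence for class groups $\mathbb{Z}^{(\text{codim }1\text{ points in }\{\mathfrak{m}\})} \to \operatorname{Cl} A \to \operatorname{Cl} U \to 0$ has trivial left term and the map is also injective since any principal divisor restricting to $0$ on $U$ is supported on $\mathfrak{m}$, hence is $0$. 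Then on the normal scheme $U$ there is the usual injection $\operatorname{Pic} U \hookrightarrow \operatorname{Cl} U$ sending an invertible sheaf to its associated Weil divisor class (injective because a Cartier divisor that is linearly equivalent to $0$ as a Weil divisor is principal, $U$ being normal/integral). Composing gives the monomorphism $\operatorname{Pic} U \to \operatorname{Cl} A$.

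\medskip

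\noindent\textbf{Part (ii).} For surjectivity it suffices, after the identification $\operatorname{Cl} A \cong \operatorname{Cl} U$, to show $\operatorname{Pic} U \to \operatorname{Cl} U$ is onto, i.e. that every Weil divisor on $U$ is Cartier. This is exactly the statement that $U$ is locally factorial: for each $\mathfrak{p} \in U$ the local ring $\mathcal{O}_{U,\mathfrak{p}} = A_{\mathfrak{p}}$ is a UFD by hypothesis, and a noetherian normal local ring is a UFD precisely when its class group vanishes (cited already in the excerpt from \cite{Mat89}), equivalently when every height $1$ prime is principal. Hence any prime Weil divisor on $U$ is Cartier in a neighbourhood of each point, so defines an invertible subsheaf of the constant sheaf of rational functions, i.e. lifts to $\operatorname{Pic} U$. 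This proves $\operatorname{Pic} U \to \operatorname{Cl} U \cong \operatorname{Cl} A$ is an isomorphism.

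\medskip

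\noindent I expect the only subtle point to be the bookkeeping in the excision argument that $\operatorname{Cl} A \to \operatorname{Cl} U$ is an isomorphism — one must be careful that $A$ being normal guarantees $\operatorname{Spec} A$ is regular in codimension $1$, so that the theory of Weil divisors and linear equivalence behaves well and the restriction to the big open $U$ is both well-defined and bijective; everything else is a routine unwinding of definitions, and indeed much of the needed input (the characterization of UFDs via the class group and via height $1$ primes) has already been recorded above.
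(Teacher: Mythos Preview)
Your argument is correct and is exactly the standard proof: identify $\operatorname{Cl} A$ with $\operatorname{Cl} U$ via excision (using $\dim A \geq 2$), then use the injection $\operatorname{Pic} U \hookrightarrow \operatorname{Cl} U$ on a normal scheme, which becomes an isomorphism when $U$ is locally factorial. The paper does not give its own proof but simply refers to \cite[Proposition 18.10]{Fo73}, where the same argument is carried out; so you have supplied precisely the details behind that citation.
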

\begin{proof}
See \cite[Proposition 18.10]{Fo73}.
\end{proof}

\begin{definition} \label{definition:factorial}
Let $X \subset \mathbb{P}^n$ be a projective variety. We say that
$X$ is \emph{factorial} if its homogeneous coordinate ring
$S(X)=\mathbb{C}[x_0, \ldots, x_n]/I_X$ is a UFD.
\end{definition}

\begin{proposition} \label{proposition:fact-nonsing-codim-1}
If $X$ is projectively normal and nonsingular in codimension $1$,
then $X$ is factorial if and only if the group $\emph{Cl} \,X$ is
isomorphic to $\mathbb{Z}$, generated by $\mathcal{O}_X(1)$.
Equivalently, $X$ is factorial if and only if the restriction map
\begin{equation*}
\emph{Cl} \,\mathbb{P}^n \to \emph{Cl}\,X
\end{equation*}
is an isomorphism.
\end{proposition}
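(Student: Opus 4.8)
The plan is to pass from the homogeneous coordinate ring $S:=S(X)$ to the affine cone $C:=\mathrm{Spec}\,S$ over $X$, and to translate the unique factorization property of $S$ into the vanishing of a class group. First I would observe that, since $X$ is projectively normal, $S$ is a noetherian, integrally closed domain --- hence a Krull domain --- and in particular $X=\mathrm{Proj}\,S$ is normal; by the criterion recalled above from \cite[p. 165]{Mat89}, $X$ is factorial (that is, $S$ is a UFD) if and only if $\mathrm{Cl}\,S=0$. Thus the statement reduces to the chain of equivalences
\[
\mathrm{Cl}\,S=0 \iff \mathrm{Cl}\,X=\mathbb{Z}\cdot[\mathcal{O}_X(1)] \iff \big(\mathrm{Cl}\,\mathbb{P}^n\to\mathrm{Cl}\,X\ \text{is an isomorphism}\big),
\]
and we may assume throughout that $\dim X\ge 1$, the only case of interest.

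For the first equivalence I would invoke the classical relation between the class group of a projectively normal variety and that of its affine cone (the global analogue of the classical $\mathbb{P}^n$ versus $\mathbb{A}^{n+1}$ computation). Let $v\in C$ be the vertex, i.e.\ the closed point defined by the irrelevant maximal ideal, and set $U:=C\setminus\{v\}$. Since $\dim C=\dim X+1\ge 2$, the vertex has codimension at least two in the normal --- hence regular in codimension one --- variety $C$, so removing it leaves the class group unchanged: $\mathrm{Cl}\,S=\mathrm{Cl}\,C\cong\mathrm{Cl}\,U$. On the other hand $U$ is the complement of the zero section in the total space of the line bundle $\mathcal{O}_X(-1)$, and the usual computation for this $\mathbb{G}_m$-bundle (see e.g.\ \cite{Fo73}) produces an exact sequence
\[
\mathbb{Z}\longrightarrow\mathrm{Cl}\,X\longrightarrow\mathrm{Cl}\,U\longrightarrow 0,
\]
whose first arrow is, up to sign, multiplication by $[\mathcal{O}_X(1)]$; in particular its image is the subgroup $\mathbb{Z}\cdot[\mathcal{O}_X(1)]$, so that $\mathrm{Cl}\,S\cong\mathrm{Cl}\,U\cong\mathrm{Cl}\,X/(\mathbb{Z}\cdot[\mathcal{O}_X(1)])$. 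Since $\mathcal{O}_X(1)$ is ample it is non-torsion in $\mathrm{Cl}\,X$; hence $\mathrm{Cl}\,S=0$ if and only if $[\mathcal{O}_X(1)]$ generates $\mathrm{Cl}\,X$, and, using once more that it is non-torsion, this is equivalent to $\mathrm{Cl}\,X\cong\mathbb{Z}$ with generator the class of $\mathcal{O}_X(1)$.

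For the second equivalence I would note that the restriction map $\mathrm{Cl}\,\mathbb{P}^n\to\mathrm{Cl}\,X$ is the homomorphism $\mathbb{Z}\to\mathrm{Cl}\,X$ carrying the hyperplane class to $[\mathcal{O}_X(1)]$ --- here the assumption that $X$ is nonsingular in codimension one ensures that restriction of Weil divisor classes is well behaved. This homomorphism is injective because $\mathcal{O}_X(1)$ is non-torsion, and it is surjective exactly when $[\mathcal{O}_X(1)]$ generates $\mathrm{Cl}\,X$; hence it is an isomorphism if and only if $\mathrm{Cl}\,X=\mathbb{Z}\cdot[\mathcal{O}_X(1)]$, which closes the circle of equivalences.

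The step I expect to require the most care is the middle one: establishing the exact sequence $\mathbb{Z}\to\mathrm{Cl}\,X\to\mathrm{Cl}\,U\to 0$ and identifying the image of its first map with $\mathbb{Z}\cdot[\mathcal{O}_X(1)]$. The subtlety is that $X$ need not be smooth --- only normal, i.e.\ nonsingular in codimension one --- so one must work throughout with groups of Weil divisor classes rather than Picard groups, and verify that both the excision isomorphism at the vertex of the cone and the comparison between $\mathrm{Cl}\,U$ and $\mathrm{Cl}\,X$ survive in this generality; they do, because $C$, $U$, $X$ and the total space of $\mathcal{O}_X(-1)$ are all normal. Once these two short exact sequences are in place, everything else is formal.
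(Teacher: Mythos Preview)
Your argument is correct and is precisely an unpacking of the reference the paper gives: the paper's proof consists solely of the citation ``See \cite[Exercise 6.3 (c) p.~147]{Ha77}'', and the exact sequence $\mathbb{Z}\to\mathrm{Cl}\,X\to\mathrm{Cl}\,U\to 0$ together with the excision at the cone vertex is exactly what that exercise asks the reader to establish. So your approach coincides with the intended one.
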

\begin{proof}
See \cite[Exercise 6.3 (c) p. 147]{Ha77}.
\end{proof}

\begin{remark} \label{rem:factoriality}
Using Proposition \ref{proposition:fact-nonsing-codim-1} and
Theorem \ref{theorem:gr-lef}, one  sees that if $X$ is a complete
intersection, nonsingular in codimension $1$ and such that $\dim X
\geq 3$, then $X$ is factorial if and only if $\textrm{Pic}\, X =
\textrm{Cl}\, X = \mathbb{Z}$, generated by $\mathcal{O}_X(1)$.
\end{remark}

\begin{proposition} \label{proposition:samuel}
Let $X \subset \mathbb{P}^n$ be a complete intersection such that
$\dim (\emph{Sing}\,X) < \dim X -3$. Then $X$ is factorial.
\end{proposition}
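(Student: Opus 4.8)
The plan is to reduce factoriality of the homogeneous coordinate ring $S(X)$ to a statement about the local ring at the vertex of the affine cone, and then apply a Grothendieck-type local factoriality result (Samuel's theorem / the $\mathrm{SGA}2$ parafactoriality results) together with Lefschetz-type theorems on the complement of the vertex. Concretely, let $C(X) = \mathrm{Spec}\,S(X) \subset \mathbb{A}^{n+1}$ be the affine cone over $X$, with vertex $v$ the origin, and let $U = C(X) \setminus \{v\}$. Since $X$ is a complete intersection, so is $C(X)$; in particular $S(X)$ is Cohen--Macaulay, hence normal once it is regular in codimension $1$. The hypothesis $\dim(\mathrm{Sing}\,X) < \dim X - 3$ translates, via the cone construction, into a bound on the codimension of the singular locus of $C(X)$: one has $\mathrm{Sing}\,C(X) = \{v\} \cup (\text{cone over } \mathrm{Sing}\,X)$, so $\dim \mathrm{Sing}\,C(X) \le \max\{0,\ \dim \mathrm{Sing}\,X + 1\} < \dim X - 2 = \dim C(X) - 3$. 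In particular $S(X)$ is regular in codimension $3$, so it is normal (indeed $R_1 + S_2$ holds), and $C(X)$ is factorial at every point of $U$ by the same codimension bound combined with the fact that a complete intersection local ring which is regular in codimension $3$ is parafactorial in the sense needed — more precisely every local ring $\mathcal{O}_{C(X), x}$ with $x \in U$ has the property that its punctured spectrum is simply connected enough for the class group to vanish.

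Next I would handle the local ring at the vertex. By the exact sequence \eqref{seq:Jaffe-local} (applied with the single maximal ideal corresponding to $v$) or directly by Proposition \ref{prop:punctured-spectrum}, since $S(X)_{\mathfrak{m}_v}$ is normal of dimension $\ge 2$ and is a UFD away from its closed point, there is an isomorphism $\mathrm{Pic}\,U \xrightarrow{\ \sim\ } \mathrm{Cl}\,(S(X)_{\mathfrak{m}_v}) = \mathrm{Cl}\,S(X)$; hence it suffices to show $\mathrm{Pic}\,U = 0$. The key input here is a Grothendieck--Lefschetz statement for the Picard group of $U$: because $C(X)$ is a complete intersection whose singular locus has codimension $\ge 4$ in $C(X)$, the local cohomological depth conditions of the $\mathrm{SGA}2$ Lefschetz theorems are met, and one deduces that $\mathrm{Pic}\,U \cong \mathrm{Pic}\,(\text{a general hyperplane-type section})$, iterating down until one reaches a setting where the Picard group is visibly trivial (e.g. the punctured spectrum of a regular local ring, whose Picard group vanishes). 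Equivalently, one may invoke Samuel's theorem (see \cite{Fo73}): a Noetherian complete intersection local ring which is regular in codimension $3$ is a UFD. This gives $\mathrm{Cl}\,S(X)_{\mathfrak{m}_v} = 0$ directly.

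Finally I would assemble the pieces: $S(X)$ is a Noetherian normal domain (from the codimension bound on singularities), its localization at every prime other than $\mathfrak{m}_v$ is a UFD (complete intersection, regular in codimension $3$, Samuel), and its localization at $\mathfrak{m}_v$ is a UFD (same reason). A Noetherian normal domain all of whose localizations at maximal ideals are UFDs, and with $\mathrm{Pic} = 0$ on the relevant punctured spectrum, has trivial class group — concretely, the sequence \eqref{seq:Jaffe-local} now reads $0 \to \mathrm{Pic}\,S(X) \to \mathrm{Cl}\,S(X) \to \mathrm{Cl}(S^{-1}S(X)) \to 0$ with the right-hand term zero, and $\mathrm{Pic}\,S(X) = 0$ since $S(X)$ is graded with $S(X)_0 = \mathbb{C}$ a field (a graded ring over a field has trivial Picard group), whence $\mathrm{Cl}\,S(X) = 0$ and $S(X)$ is a UFD, i.e. $X$ is factorial. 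The main obstacle is the precise bookkeeping of codimensions in the cone and verifying that the hypothesis $\dim(\mathrm{Sing}\,X) < \dim X - 3$ is exactly the threshold that makes Samuel's criterion (regularity in codimension $3$ for a complete intersection) applicable to every local ring of $C(X)$, including the vertex; once that is in place, the rest is a formal consequence of the cited structure theorems.
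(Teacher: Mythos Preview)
Your proposal is correct and rests on the same key input as the paper's one-line proof, namely Grothendieck's resolution of Samuel's conjecture (\cite[Exp.~XI, Corollaire~3.14]{SGA2}), applied to the local rings of the affine cone $C(X)$ after exactly the dimension count you carry out. The only remark is that your final assembly via the Jaffe sequence and the claim $\mathrm{Pic}\,S(X)=0$ is a detour: for a graded normal domain over a field one has $\mathrm{Cl}\,S(X)\cong\mathrm{Cl}\,S(X)_{\mathfrak{m}_v}$ (see \cite[\S 10]{Fo73}), so a single application of the SGA2 result at the vertex already yields $\mathrm{Cl}\,S(X)=0$.
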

\begin{proof}
This follows from Grothendieck's proof of Samuel's conjecture, see
\cite[Exp. XI, Corollaire 3.14]{SGA2} and \cite[p. 168]{Mat89}.
\end{proof}
Notice that Proposition \ref{proposition:samuel} implies that any
complete intersection of dimension at least $4$ and with only
isolated singularities is necessarily factorial. This explains why
in the sequel we will restrict ourselves to the case where $X$ is
a threefold.

Let us provide now a couple of examples showing that factoriality
is a subtle property, which cannot be detected by merely looking
at the type of singularities of $X$.

\begin{example} \label{example:fact-1}
Take a hypersurface $X \subset \mathbb{P}^4$ of degree $d$ with a
unique ordinary double point and no other singularities. If $d
\geq 3$ then $X$ is factorial, see \cite{Ch10}. By contrast, if
$d=2$ then $X$ is a cone over a smooth quadric surface in
$\mathbb{P}^3$, which is not factorial because any plane contained
in $X$ is a Weil divisor which is not Cartier. Notice that, since
all ordinary double points are analytically isomorphic, it is
impossible to tell locally analytically the difference between the
two cases $d \geq 3$ and $d=2$, see also \cite[p. 160-161]{De01}.
\end{example}
\begin{example} \label{example:fact-2}
Take a hypersurface $X \subset \mathbb{P}^4$ of degree $d$ with
exactly $(d-1)^2$ ordinary double points and no other singularities.
Then $X$ is factorial if and only if the nodes are not contained in
a plane, see \cite{ch-sb}. Up to change of coordinates, the fact
that the nodes are contained in a plane is equivalent to the fact
that the equation of $X$ can be written as $x_0F+x_1G=0$, hence the
whole plane $\{x_0=x_1=0 \}$ is contained in $X$. Such a plane is a
Weil divisor which is not Cartier and this explains the lack of
factoriality in this case.
\end{example}

\begin{definition} \label{definition:loc-factorial}
We say that $X \subset \mathbb{P}^n$ is \emph{locally factorial}
if the local ring $\mathcal{O}_{X,p}$ is a UFD for any $p \in X$.
We say that $X \subset \mathbb{P}^n$ is \emph{locally analytically
factorial} if the complete local ring
$\widehat{\mathcal{O}}_{X,p}$ is a UFD for any $p \in X$.
\end{definition}
Since every regular local ring is a UFD (\cite[Theorem 20.3]{Mat89})
and the completion of a regular local ring is again regular
(\cite[Exercise 19.1 p. 488]{Eis94}), it suffices to check the UFD
property only at the points $p \in \textrm{Sing}\,X$. An immediate
consequence of Proposition
\ref{proposition:factoriality-localization} is that if an
irreducible projective variety $X \subset \mathbb{P}^n$ is
factorial, then it is locally factorial. By using Remark
\ref{rem:factoriality} and \cite[Chapter II, Proposition 6.11]{Ha77}
one can prove the following more precise result.

\begin{proposition} \label{proposition:fac=locally-fac} Let $X
\subset \mathbb{P}^n$ be a complete intersection, non singular in
codimension $1$ and such that $\dim X \geq 3$. Then $X$ is
factorial if and only if $X$ is locally factorial.
\end{proposition}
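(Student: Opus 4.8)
The plan is to deduce the equivalence from the short exact sequence \eqref{seq:Jaffe-local} applied to the homogeneous coordinate ring $A = S(X)$. One direction is already recorded in the discussion preceding the statement: factoriality of $X$ implies local factoriality of every $\mathcal{O}_{X,p}$ by Proposition \ref{proposition:factoriality-localization}, and this has nothing to do with $X$ being a complete intersection of dimension $\geq 3$. So the real content is the converse: assuming $X$ is locally factorial, I want to show $S(X)$ is a UFD, equivalently (by Proposition \ref{proposition:fact-nonsing-codim-1}, which applies since a complete intersection is projectively normal and the hypothesis ``nonsingular in codimension $1$'' is exactly the codimension-$1$ smoothness needed) that $\textrm{Cl}\,X = \mathbb{Z}\cdot\mathcal{O}_X(1)$.

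First I would set up the birational/affine-cone picture. Let $C(X) \subset \mathbb{A}^{n+1}$ be the affine cone over $X$, with coordinate ring $A = S(X)$, and let $\mathfrak{m}$ be the irrelevant maximal ideal (the vertex). Since $X$ is locally factorial and $\dim X \geq 3$, the punctured affine cone $C(X) \setminus \{\text{vertex}\}$ — which is a $\mathbb{G}_m$-bundle over $X$ away from the vertex, more precisely the complement of the zero section in the total space of $\mathcal{O}_X(-1)$ — has all its local rings UFDs: away from the vertex the local rings of $C(X)$ are, up to the regular parameter coming from the $\mathbb{G}_m$-direction, the local rings $\mathcal{O}_{X,p}$, which are UFDs by hypothesis. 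This is where I invoke \cite[Chapter II, Proposition 6.11]{Ha77}: the class group of the total space of a line bundle over $X$ maps onto $\textrm{Cl}\,X$, and more usefully, $\textrm{Cl}$ of the punctured cone is computed by $\textrm{Cl}\,X$ modulo the class of the hyperplane section. Then Proposition \ref{prop:punctured-spectrum}(ii), applied to the local ring $A_{\mathfrak{m}}$ (which is normal — complete intersections are normal in codimension $1$ and Cohen--Macaulay, hence $S_2$, hence normal once $R_1$ holds — and of dimension $\dim X + 1 \geq 4 \geq 2$), gives $\textrm{Pic}\,U \xrightarrow{\ \sim\ } \textrm{Cl}\,A_{\mathfrak{m}}$ where $U = \textrm{Spec}\,A_{\mathfrak{m}} \setminus \mathfrak{m}$.

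Next I would run the exact sequence \eqref{seq:Jaffe-local} with the single maximal ideal $\mathfrak{m}$ (assuming $A_{\mathfrak{m}}$ is the only non-UFD localization, which I may reduce to): $0 \to \textrm{Pic}\,A \to \textrm{Cl}\,A \to \textrm{Cl}(S^{-1}A) \to 0$, where $S^{-1}A = A_{\mathfrak{m}}$. By the localization $\textrm{Cl}\,A \cong \textrm{Cl}\,A_{\mathfrak{m}}$ argument combined with Proposition \ref{prop:punctured-spectrum}, and since away from the vertex $C(X)$ is locally a UFD so $\textrm{Cl}(C(X)\setminus\text{vertex})$ vanishes... actually the cleanest route: because $X$ is locally factorial, $C(X)$ is locally factorial at every point except possibly the vertex, so $\textrm{Pic}$ of the punctured cone equals $\textrm{Cl}$ of the punctured cone and the latter, being $C(X)$ minus a point of codimension $\geq 2$, has the same class group as $C(X)$ itself, namely $\textrm{Cl}\,A$. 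Meanwhile $\textrm{Pic}(C(X)\setminus\text{vertex}) = \textrm{Pic}\,X / \langle \mathcal{O}_X(1)\rangle$ (pulling back line bundles along the $\mathbb{G}_m$-bundle, with the class of $\mathcal{O}_X(1)$ becoming trivial as it is the ``Euler class'' of the bundle), and $\textrm{Pic}\,X = \mathbb{Z}\cdot\mathcal{O}_X(1)$ by Theorem \ref{theorem:gr-lef}. Hence $\textrm{Cl}\,A = \textrm{Pic}(C(X)\setminus\text{vertex}) = \mathbb{Z}/\mathbb{Z} = 0$, so $A = S(X)$ is a UFD and $X$ is factorial.

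The main obstacle, I expect, is bookkeeping the precise relationship between the class group of the affine cone $C(X)$, the class group of its punctured version, and the Picard/class groups of $X$ — in particular making sure the ``subtract the hyperplane class'' correction is stated correctly and that the normality and dimension hypotheses needed for Proposition \ref{prop:punctured-spectrum} and for the exact sequence \eqref{seq:Jaffe-local} are genuinely met (normality of $A_{\mathfrak{m}}$ for a complete intersection nonsingular in codimension $1$, and finiteness of the set of non-UFD localizations, which follows from local factoriality of $X$ since only the vertex can fail). None of these steps is deep individually; the care is entirely in assembling Remark \ref{rem:factoriality}, \cite[Chapter II, Proposition 6.11]{Ha77}, and the two cited results of Fossum into a clean chain of isomorphisms.
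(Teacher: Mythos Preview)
Your argument is correct in outline but takes a longer route than the paper. The paper's proof is essentially a one-liner: if $X$ is locally factorial then every Weil divisor on $X$ is Cartier---this is exactly the content of \cite[Chapter II, Proposition 6.11]{Ha77}---so $\textrm{Cl}\,X=\textrm{Pic}\,X$; by Theorem \ref{theorem:gr-lef} the latter is $\mathbb{Z}\cdot\mathcal{O}_X(1)$, and Remark \ref{rem:factoriality} then gives factoriality of $S(X)$.

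You instead work directly on the affine cone $C(X)=\textrm{Spec}\,S(X)$: you observe that the punctured cone is locally factorial (hence its $\textrm{Pic}$ equals its $\textrm{Cl}$), identify $\textrm{Cl}$ of the punctured cone with $\textrm{Cl}\,S(X)$ by excision of the vertex, and compute its $\textrm{Pic}$ as $\textrm{Pic}\,X/\langle\mathcal{O}_X(1)\rangle=0$ via the $\mathbb{G}_m$-bundle structure and Grothendieck--Lefschetz. This is valid, and amounts to re-deriving the link between $\textrm{Cl}\,X$ and $\textrm{Cl}\,S(X)$ that already sits behind Proposition \ref{proposition:fact-nonsing-codim-1}. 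One correction: what you attribute to \cite[Chapter II, Proposition 6.11]{Ha77} (``the class group of the total space of a line bundle over $X$ maps onto $\textrm{Cl}\,X$'') is not what that proposition says; II.6.11 is precisely ``locally factorial $\Rightarrow$ $\textrm{Cl}=\textrm{Pic}$''. You use this fact correctly later, on the punctured cone---but the paper applies it to $X$ itself, which short-circuits the entire affine-cone apparatus, the $\mathbb{G}_m$-bundle computation, and the invocations of \eqref{seq:Jaffe-local} and Proposition \ref{prop:punctured-spectrum} (which your ``cleanest route'' ends up not needing anyway).
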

The condition $\dim X \geq 3$ in the statement of Proposition
\ref{proposition:fac=locally-fac} is an essential one. In fact,
take any smooth surface $V \subset \mathbb{P}^3$ of degree at
least $2$ and containing a line. Then $V$ is not factorial (since
the line is a divisor which is not an integer multiple of the
hyperplane section), but it is locally factorial because it is
nonsingular.

By Mori's theorem (\cite[Corollary 6.12]{Fo73}) there is a
monomorphism $\textrm{Cl}\, \mathcal{O}_{X,p} \to \textrm{Cl}\,
\widehat{\mathcal{O}}_{X,p}$; this implies that if $X$ is locally
analytically factorial then $X$ is locally factorial. The converse
is in general not true, as shown by the following examples.

\begin{example} \label{ex:anlocfac-vs-locfac}
Take a factorial hypersurface $X \subset \mathbb{P}^4$ with a node
$p$. Then $X$ is locally factorial (Proposition
\ref{proposition:fac=locally-fac}), so the ring
$\mathcal{O}_{X,p}$ is a UFD. However its completion
$\widehat{\mathcal{O}}_{X,p}$ is not a UFD, since it is isomorphic
to $\mathbb{C}[[x, \, y, \, z, \, w]]/(xy-zw)$ and the equality
$xy=zw$ is a product of irreducibles in two different ways.
\end{example}

\begin{example}[\cite{Lip75}] \label{example:cone-anlocfac}
Let $X$ be a cone over a smooth, projectively normal variety $V
\subset \mathbb{P}^{n-1}$. Then $X$ is factorial if and only if
$\textrm{Pic}\,V=\mathbb{Z}$, generated by $\mathcal{O}_V(1)$.
Moreover, $X$ is locally analytically factorial if and only if it is
factorial and, in addition,
\begin{equation}\label{eq:cond-locanfac}
H^1(V, \, \mathcal{O}_V(k))=0 \quad \textrm{for all } k >0.
\end{equation}
Condition \eqref{eq:cond-locanfac} is satisfied e.g. if $V$ is a
complete intersection and $\dim V \geq 2$.
\end{example}

\begin{definition}
A projective variety $X$ is called $\mathbb{Q}$-\emph{factorial}
if every Weil divisor on $X$ has an integer multiple which is a
Cartier divisor.
\end{definition}
Setting $G(X):=\textrm{Cl}\, X / \textrm{Pic}\, X$, we see that $X$
is factorial if and only if $G(X)=0$ and $X$ is
$\mathbb{Q}$-factorial if and only if $G(X)$ is a torsion group. In
particular, if $X$ is factorial then it is $\mathbb{Q}$-factorial.
For threefolds which are a complete intersection in a smooth ambient
space the converse also holds.

\begin{proposition} \label{prop:Q-fact}
Let $Y \subset \mathbb{P}^n$ be a smooth fourfold and let $X
\subset Y$ be a reduced and irreducible threefold with isolated
singularities, which is the intersection of $Y$ with a
hypersurface of $\mathbb{P}^n$. Then $X$ is factorial if and only
if $X$ is $\mathbb{Q}$-factorial.
\end{proposition}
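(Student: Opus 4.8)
The plan is to show that $\mathbb{Q}$-factoriality implies factoriality by proving that the finitely generated abelian group $G(X) = \mathrm{Cl}\,X/\mathrm{Pic}\,X$ is not only torsion but actually trivial. Since $X$ is a complete intersection of dimension $3$ which is nonsingular in codimension $1$, Remark \ref{rem:factoriality} tells us that factoriality is equivalent to $\mathrm{Pic}\,X = \mathrm{Cl}\,X = \mathbb{Z}$, so it suffices to prove $G(X) = 0$. First I would localize the problem: since $X$ has only isolated singularities, say at $p_1,\ldots,p_k$, and since $\mathcal{O}_{X,p}$ is a UFD at every smooth point (a regular local ring is a UFD), the group $G(X)$ injects into $\bigoplus_{i=1}^k \mathrm{Cl}\,\mathcal{O}_{X,p_i}/\mathrm{Pic}\,\mathcal{O}_{X,p_i}$; more precisely, using the exact sequence \eqref{seq:Jaffe-local} with $S = A - \bigcup \mathfrak{m}_i$ (here $A$ is the homogeneous coordinate ring, or one works with the punctured-spectrum description of Proposition \ref{prop:punctured-spectrum}), the obstruction to factoriality is detected by the local class groups at the singular points, and these are themselves finitely generated.

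The key point is then a local statement: for a normal local ring $(\mathcal{O}_{X,p},\mathfrak{m})$ arising as a hypersurface section of a smooth fourfold at an isolated singularity, the local class group $\mathrm{Cl}\,\mathcal{O}_{X,p}$ is torsion-free. I would prove this by passing to the completion. By Mori's theorem (\cite[Corollary 6.12]{Fo73}) there is a monomorphism $\mathrm{Cl}\,\mathcal{O}_{X,p} \hookrightarrow \mathrm{Cl}\,\widehat{\mathcal{O}}_{X,p}$, so it is enough to show the completed local class group is torsion-free. Now $\widehat{\mathcal{O}}_{X,p}$ is a complete local hypersurface ring with isolated singularity and embedding dimension $4$, i.e. of the form $\mathbb{C}[[x,y,z,w]]/(f)$ with $f$ having an isolated critical point. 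For such a three-dimensional isolated hypersurface singularity the divisor class group is torsion-free — this is a consequence of the theory relating $\mathrm{Cl}$ to the local cohomology / Lefschetz-type results of Grothendieck (SGA2), combined with the fact that for an isolated hypersurface singularity of dimension $\geq 3$ the relevant local Picard-type group embeds into a free $\mathbb{Z}$-module coming from the cohomology of the link, which is $2$-connected and has torsion-free $H^2$ by the Lefschetz hyperplane theorem for the Milnor fiber. Hence $\mathrm{Cl}\,\widehat{\mathcal{O}}_{X,p}$ is torsion-free, and therefore so is $\mathrm{Cl}\,\mathcal{O}_{X,p}$.

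Combining these: $G(X)$ injects into a direct sum of subquotients of torsion-free groups, hence $G(X)$ is torsion-free; but if $X$ is $\mathbb{Q}$-factorial then $G(X)$ is torsion by definition, so $G(X) = 0$ and $X$ is factorial. The reverse implication, that factorial implies $\mathbb{Q}$-factorial, is immediate and was already noted in the text. I expect the main obstacle to be the local torsion-freeness statement: one must be careful that the relevant result genuinely applies to complete intersection (here hypersurface) singularities in dimension three with merely isolated — not ordinary — singularities, and one should pin down the cleanest reference, likely Grothendieck's results in \cite{SGA2} on $\mathbb{Q}$-factoriality versus factoriality for local complete intersections, or alternatively a direct topological argument via the link of the singularity. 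An alternative route that avoids the completion entirely is to use Proposition \ref{prop:punctured-spectrum}: $\mathrm{Cl}\,\mathcal{O}_{X,p} \cong \mathrm{Pic}\,U_p$ where $U_p$ is the punctured spectrum, and then invoke the fact that for a three-dimensional isolated complete intersection singularity $\mathrm{Pic}$ of the punctured spectrum is torsion-free, which again reduces to cohomology of the link; I would present whichever of these gives the shortest self-contained argument given the results already cited in Section \ref{section:preliminaries}.
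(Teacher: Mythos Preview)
Your proposal follows essentially the same strategy as the paper: inject $G(X)$ into the direct sum of the local class groups $\mathrm{Cl}\,\mathcal{O}_{X,p_i}$ (the paper does this via the global Jaffe exact sequence \eqref{seq:Jaffe-global}, which is cleaner than what you sketch) and then show each local class group is torsion-free. For the torsion-freeness, the paper takes exactly your ``alternative route'': it identifies $\mathrm{Cl}\,\mathcal{O}_{X,p} \cong \mathrm{Pic}\,U_p$ via Proposition~\ref{prop:punctured-spectrum} and then cites Robbiano~\cite{Rob76} and Dao~\cite{Dao12} for the fact that $\mathrm{Pic}$ of the punctured spectrum of a three-dimensional isolated hypersurface singularity is torsion-free. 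Your primary route through the completion and Mori's theorem is an unnecessary detour---proving torsion-freeness of $\mathrm{Cl}\,\widehat{\mathcal{O}}_{X,p}$ directly is no easier, and the topological argument you sketch via the link really lives on the algebraic (not formal) punctured spectrum anyway, so it is precisely the Robbiano/Dao result in disguise.
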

\begin{proof}
Let us assume that $G(X)$ is a torsion group; we want to prove that
$G(X)=0$. According to \cite[\S 1]{BiSt81} and \cite[Section
2]{HaPo13}, from \eqref{seq:Jaffe-local} one obtains the so-called
\emph{Jaffe's exact sequence}, namely
\begin{equation} \label{seq:Jaffe-global}
0 \to \textrm{Pic}\, X \to \textrm{Cl}\, X \to \bigoplus_{p \in
\textrm{Sing}\, X} \textrm{Cl}\, \mathcal{O}_{X, p},
\end{equation}
so we have a monomorphism
\begin{equation} \label{equation:G(X)}
G(X) \to \bigoplus_{p \in \textrm{Sing}\, X} \textrm{Cl}\,
\mathcal{O}_{X, p}.
\end{equation}
Let $\mathfrak{m}_p$ be the maximal ideal of $\mathcal{O}_{X, p}$
and set $U_p := \textrm{Spec}\,\mathcal{O}_{X, p} - \mathfrak{m}_p$;
then there is an isomorphism $\textrm{Pic}\, U_p \to \textrm{Cl}\,
\mathcal{O}_{X,p}$, see Proposition \ref{prop:punctured-spectrum}.
On the other hand, $\textrm{Pic}\, U_p$ is torsion-free by
\cite{Rob76}, see also \cite{Dao12}. It follows that $\textrm{Cl}\,
\mathcal{O}_{X,p}$ is torsion-free, so \eqref{equation:G(X)} yields
$G(X)=0$.
\end{proof}

\begin{remark}
It is possible to give a different proof of Proposition
\ref{prop:Q-fact} using Theorem \ref{theorem:gr-lef-gen}, see
\cite{Sab09}. It is also true that  any $\mathbb{Q}$-factorial
Gorenstein threefold with terminal singularities is factorial
(\cite{Cut88}); however notice that an ordinary threefold
singularity of multiplicity $m$ is terminal if and only if $m \leq
3$ (\cite[p. 351]{Re87}). For the relevance of the concept of
$\mathbb{Q}$-factoriality in the setting of birational geometry, we
refer the reader to \cite{Mel04}.
\end{remark}

\begin{remark}
If $X$ is not a complete intersection, then Proposition
\ref{prop:Q-fact} is in general not true. For instance, the cone
over the Veronese surface $V \subset \mathbb{P}^5$ is
$\mathbb{Q}$-factorial but not factorial, see \cite[p. 20]{bs:av}.
\end{remark}

\section{A Lefschetz-type result} \label{section:lefschetz-type}
In this section, which is devoted to the proof of Theorem A, we
use the following notation.

Let $Y \subseteq \mathbb{P}^n$ be a smooth, complete intersection
fourfold and let $X \subseteq Y$ be a reduced and irreducible
threefold which is the intersection of $Y$ with a hypersurface of
  $\mathbb{P}^n$.
  We suppose that the only singularities of $X$ are isolated
  multiple points and we
  denote by $\Sigma=\{p_1, \ldots, p_k\}$ the singular locus of $X$.

We also assume that the tangent cone of $X$ at each point $p_i$ is a
cone over a \emph{smooth} surface of degree $m_i$ in $\mathbb{P}^3$,
and we express this condition by saying that $p_i$ is an
\emph{ordinary multiple point} of multiplicity $m_i$, or \emph{an
ordinary} $m_i$-\emph{ple point}.

Let $\widetilde{Y}:=\mathrm{Bl}_{ \Sigma}(Y)$
  be the blow-up of $Y$ at $\Sigma$, let $\eta \colon \widetilde{Y} \to Y$
  be the blowing-up map, with exceptional divisors $E_1, \ldots, E_k$  and
  write $\widetilde{X}\subset \widetilde{Y}$
   for the strict transform of
  $X$; notice that $\widetilde{X}$ is a smooth threefold.
  Moreover, let $H$  be the pullback on $\widetilde{Y}$ of the
  hyperplane section of $Y$, namely $H=\eta^* \mathcal{O}_Y(1)$.

Finally, we denote by $\pi \colon \widetilde X \to X$ the
restriction
  of $\eta$ to $X$
  and by $\mathcal{E}_i \subset \widetilde X$  the exceptional divisor of
  $\pi$ over
  the point $p_i$, that is $\mathcal{E}_i=\widetilde{X} \cap E_i$. Since
   each $p_i \in X$ is an ordinary
  $m_i$-ple point, $\mathcal{E}_i$  is a smooth surface of
  degree $m_i$ in $\mathbb{P}^3$ and we obtain
\begin{equation} \label{equation:Ei}
  H^1(\mathcal{E}_i,  \, \mathbb{C})=0, \quad H^3(\mathcal{E}_i, \,
  \mathbb{C})=0.
\end{equation}
We can summarize the situation by means of the following
commutative
 diagram.
\begin{equation*} \label{diag:XY}
\xymatrix{\mathcal{E}_i \ar@{^{(}->}[r] \ar@{^{(}->}[d] &
\widetilde{X} \ar[r]^{\pi} \ar@{^{(}->}[d]
& X \ar@{^{(}->}[d]  \\
E_i \ar@{^{(}->}[r] & \widetilde{Y} \ar[r]^{\eta} & Y. }
\end{equation*}

\begin{proposition} \label{proposition:defect}
With the notation above, we have
\begin{equation*} \label{equation:hdefect}
    b_4(\widetilde X)=b_4(X)+k.
\end{equation*}
\end{proposition}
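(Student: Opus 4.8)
The plan is to compare the cohomology of the (singular) threefold $X$ with that of its resolution $\widetilde X$ by means of the Mayer--Vietoris ("descent") long exact sequence attached to the abstract blow-up square with vertices $\mathcal E:=\coprod_{i=1}^k\mathcal E_i$, $\widetilde X$, $\Sigma$ and $X$. Since $\pi$ collapses each $\mathcal E_i$ to the point $p_i$ and is a homeomorphism over $X\setminus\Sigma$, the space $X$ is the topological colimit of $\widetilde X\hookleftarrow\mathcal E\to\Sigma$ (the map $\mathcal E\to\Sigma$ being the restriction of $\pi$); as $\mathcal E\hookrightarrow\widetilde X$ is a closed cofibration (a smooth subvariety admitting a tubular neighbourhood) this colimit is a homotopy pushout, and one obtains the exact sequence
\[
\cdots \to H^j(X,\mathbb C) \to H^j(\widetilde X,\mathbb C)\oplus H^j(\Sigma,\mathbb C) \to H^j(\mathcal E,\mathbb C) \to H^{j+1}(X,\mathbb C)\to\cdots,
\]
where the middle map is $(a,b)\mapsto a|_{\mathcal E}-(\pi|_{\mathcal E})^*b$. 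Equivalently, one may use the long exact sequences of the pairs $(\widetilde X,\mathcal E)$ and $(X,\Sigma)$ together with the excision/collapsing isomorphism $H^\bullet(\widetilde X,\mathcal E)\cong H^\bullet(X,\Sigma)$ coming from the homeomorphism $\widetilde X/\mathcal E\cong X/\Sigma$.

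Next I feed in the available vanishings. As $\Sigma$ is finite, $H^j(\Sigma,\mathbb C)=0$ for $j\ge1$; as each $\mathcal E_i$ is a smooth surface in $\mathbb P^3$, \eqref{equation:Ei} gives $H^3(\mathcal E,\mathbb C)=0$, while $H^5(\mathcal E,\mathbb C)=0$ for dimension reasons and $H^4(\mathcal E_i,\mathbb C)\cong\mathbb C$, so $\dim H^4(\mathcal E,\mathbb C)=k$. Extracting the segment of the sequence around degree $4$ yields
\[
0 \to H^4(X,\mathbb C) \to H^4(\widetilde X,\mathbb C) \xrightarrow{\ \rho\ } H^4(\mathcal E,\mathbb C) \to H^5(X,\mathbb C) \to H^5(\widetilde X,\mathbb C)\to 0,
\]
with $\rho$ the restriction map (here $H^3(\mathcal E,\mathbb C)=0$ is what gives injectivity of $H^4(X,\mathbb C)\to H^4(\widetilde X,\mathbb C)$). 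Hence $b_4(\widetilde X)=b_4(X)+\dim(\operatorname{im}\rho)$, and the whole statement reduces to showing that $\rho$ is surjective, i.e. $\dim(\operatorname{im}\rho)=k$.

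To prove surjectivity I exhibit explicit classes on the smooth threefold $\widetilde X$. Let $e_i\in H^2(\widetilde X,\mathbb C)$ be the class of the divisor $\mathcal E_i=\widetilde X\cap E_i$. Since $Y$ is smooth and we blow up a point, $E_i\cong\mathbb P^3$ with normal bundle $\mathcal O_{\mathbb P^3}(-1)$; restricting $\mathcal O_{\widetilde Y}(E_i)$ in turn to $\widetilde X$ and to $\mathcal E_i\subset E_i=\mathbb P^3$ gives $\mathcal O_{\widetilde X}(\mathcal E_i)|_{\mathcal E_i}\cong\mathcal O_{\mathcal E_i}(-1)$, so $e_i|_{\mathcal E_i}=-h_i$, where $h_i$ is the hyperplane class of $\mathcal E_i$; and $e_i|_{\mathcal E_j}=0$ for $j\ne i$ because the exceptional divisors are disjoint. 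Therefore $e_i^2\in H^4(\widetilde X,\mathbb C)$ restricts to $h_i^2=m_i[\mathrm{pt}]$ on $\mathcal E_i$ (as $\deg\mathcal E_i=m_i$) and to $0$ on every $\mathcal E_j$ with $j\ne i$. Since $m_i\ne0$, the classes $e_1^2,\dots,e_k^2$ already map onto a basis of $H^4(\mathcal E,\mathbb C)=\bigoplus_i H^4(\mathcal E_i,\mathbb C)$, so $\rho$ is surjective and $b_4(\widetilde X)=b_4(X)+k$.

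The only genuinely delicate point is this last surjectivity: one must produce global classes on $\widetilde X$ with prescribed restrictions to the exceptional surfaces, and the computation rests on the $\mathcal O(-1)$ normal bundle of a point blow-up together with the disjointness of the $E_i$; everything else is formal bookkeeping with the Mayer--Vietoris sequence and the vanishings in \eqref{equation:Ei}. A minor technical check to be recorded along the way is the validity of the descent exact sequence, equivalently the excision isomorphism $H^\bullet(\widetilde X,\mathcal E)\cong H^\bullet(X,\Sigma)$, which follows from $\widetilde X/\mathcal E\cong X/\Sigma$ and the fact that $(\widetilde X,\mathcal E)$ and $(X,\Sigma)$ are good pairs.
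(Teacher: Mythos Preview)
Your argument is correct and takes a genuinely different route from the paper's proof. The paper computes $b_4(\widetilde X)$ via the Leray spectral sequence for $\pi\colon\widetilde X\to X$: after identifying the $E_2$ page using the skyscraper description of $R^q\pi_*\underline{\mathbb C}$ and the vanishings \eqref{equation:Ei}, the only nonobvious differential landing in total degree $4$ is $d_5\colon\bigoplus_i H^4(\mathcal E_i,\mathbb C)\to H^5(X,\mathbb C)$, and the paper kills it by invoking Theorem~\ref{theorem:dimca} (Dimca) to get $H^5(X,\mathbb C)=0$. You instead run the Mayer--Vietoris/blow-up sequence and, rather than appealing to any vanishing of $H^5(X)$, you prove directly that the restriction $\rho\colon H^4(\widetilde X,\mathbb C)\to\bigoplus_i H^4(\mathcal E_i,\mathbb C)$ is surjective by exhibiting the explicit classes $e_i^2$ and computing their restrictions via the $\mathcal O(-1)$ normal bundle of the point blow-up. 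This is more elementary (no spectral sequence) and strictly more robust: your surjectivity argument uses only that the $p_i$ are ordinary multiple points on a threefold sitting as a Cartier divisor in a smooth fourfold, and does not need the complete-intersection hypothesis on $X$ that the paper uses to invoke Theorem~\ref{theorem:dimca}. Conversely, the paper's Leray computation gives a little more for free along the way (e.g.\ the full decomposition $H^4(\widetilde X,\mathbb C)\cong H^4(X,\mathbb C)\oplus\bigoplus_i H^4(\mathcal E_i,\mathbb C)$ rather than just equality of dimensions), and the spectral-sequence framework would adapt more readily if one wanted other Betti numbers as well.
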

\begin{proof}
Let $\underline{\mathbb{C}}$ be the constant sheaf relative to
$\mathbb{C}$ on $\widetilde X$ and consider the corresponding Leray
spectral sequence for $\pi \colon \widetilde X \to X$, namely
\begin{equation*}
    \mathrm{E}_2^{p,q}= H^p(X, \, R^q \pi_* \underline{\mathbb C}), \quad
    \mathrm{E}_{\infty} \Rightarrow H^{\ast}(\widetilde{X}, \, \mathbb{C}).
  \end{equation*}
  Observe that $R^0\pi_* \underline{ \mathbb C} = \underline{\mathbb{C}}$,
  where by abuse of notation we continue to write
$\underline{\mathbb{C}}$ for the
  constant sheaf relative to $\mathbb{C}$ on $X$. Moreover, since any
  semi-algebraic set has locally a conic structure
  (\cite[Theorem 9.3.6 p.225]{bcr:realag}), it follows  that $X$ is locally
  contractible and
  \begin{equation*}
    R^q \pi_* \underline{\mathbb{C}}= \bigoplus_{i=1}^{k}  H^q(\mathcal{E}_i,
    \,
    \mathbb{C})_{p_i}
  \end{equation*}
  for every $q>0$, where the subscript denotes the skyscraper
  sheaf supported at the point $p_i$. Summing up, we obtain
  \begin{equation} \label{equation:Rp}
    \mathrm{E}_2^{p,q}= \begin{cases}
      H^p(X, \, \mathbb{C}) & p \geq 0, \quad q=0 \\
      \bigoplus_{i=1}^{k} H^q(\mathcal{E}_i, \, \mathbb C) & p=0, \quad q>0 \\
      0 & p>0, \quad q>0.
    \end{cases}
  \end{equation}
  The relevant part of $\mathrm{E}^{p,q}_2$ is shown in Table
  \ref{table:spectralsequence}.

\begin{table}[H] \label{table:spectralsequence}
\begin{tikzpicture}[cells={anchor=base}]
  \matrix (m) [matrix of math nodes, column sep=2mm,row sep=2mm]{
       \quad  q  &           \quad  \strut              &[5mm]\quad \strut  &
       \quad \strut& & & &   &
      \strut \\
       4 & \bigoplus_{i=1}^{k} H^4(\mathcal{E}_i)   & 0 &  \dots & \dots &\dots & \dots & 0 & \strut\\
       3 & \bigoplus_{i=1}^k H^3(\mathcal{E}_i)=0 &  0 & \dots & \dots &\dots & \dots & 0 & \strut\\
       2 & \bigoplus_{i=1}^k H^2(\mathcal{E}_i)   & 0 & \dots & \dots &\dots & \dots & 0 & \strut \\
       1 & \node {\bigoplus_{i=1}^k H^1(\mathcal{E}_i)=0} ; & 0 & \dots & \dots &\dots & \dots & 0 &
     \strut \\
     0 & \node {H^0(X)} ; & H^1(X) & H^2(X) & H^3(X) & H^4(X) & H^5(X) & H^6(X)
     &\quad \strut \\
      \quad \strut& 0 & 1 & 2 & 3 & 4 & 5 & 6 & p \strut\\};
        \draw[->,thick] (m-7-1.north) -- (m-7-9.north) ;
        \draw[thick] (m-6-1.north) -- (m-6-9.north) ;
    \draw[thick] (m-5-1.north) -- (m-5-9.north) ;
    \draw[thick] (m-4-1.north) -- (m-4-9.north) ;
    \draw[thick] (m-3-1.north) -- (m-3-9.north) ;
    \draw[thick] (m-2-1.north) -- (m-2-9.north) ;

    \draw[thick] (-3,-2.5) -- (-3,2.5);
    \draw[thick] (-1.3,-2.5) -- (-1.3,2.5);
    \draw[thick] (0.2,-2.5) -- (0.2,2.5);
    \draw[thick] (1.8,-2.5) -- (1.8,2.5);
    \draw[thick] (3.5,-2.5) -- (3.5,2.5);
    \draw[thick] (5.0,-2.5) -- (5.0,2.5);
    \draw[thick] (6.6,-2.5) -- (6.6,2.5);
    \draw[<-,thick] (m-1-1.east) -- (m-7-1.east) ;
\end{tikzpicture}
\caption{The table $\mathrm{E}^{p,q}_2$.}
\end{table}

Computing the differentials, it is not difficult to check that
$\mathrm{E}_6^{p, q}=\mathrm{E}_{\infty}^{p, q}$, so there is a
direct sum decomposition
\begin{equation} \label{equation:direct-sum}
H^4(\widetilde X, \, \mathbb{C})=\mathrm{E}_6^{4,0} \oplus
\mathrm{E}_6^{3,1} \oplus \mathrm{E}_{6}^{2,2}\oplus
\mathrm{E}_6^{1,3} \oplus \mathrm{E}_6^{0,4}.
\end{equation}
Using \eqref{equation:Ei} and \eqref{equation:Rp} we obtain
\begin{equation*} \label{equation:summands Leray}
\begin{split}
\mathrm{E}_6^{4,0}&=H^4(X, \, \mathbb{C}), \\
\mathrm{E}_6^{3,1}&=\mathrm{E}_6^{2,2}=\mathrm{E}_6^{1,3}=0, \\
\mathrm{E}_6^{0,4}&= \ker \{d_5 \colon \bigoplus_{i=1}^k
H^4(\mathcal{E}_i, \, \mathbb{C}) \to H^5(X, \, \mathbb{C})\}.
\end{split}
\end{equation*}
Moreover $X$ is a complete intersection threefold with only isolated
singularities, hence Theorem \ref{theorem:dimca} yields $H^5(X, \,
\mathbb{C}) \cong H^5(\mathbb{P}^n, \, \mathbb{C})=0$. Then
 \eqref{equation:direct-sum} becomes
\begin{equation*}
H^4(\widetilde X, \, \mathbb{C})= H^4(X, \mathbb{C}) \oplus
\bigoplus_{i=1}^k H^4(\mathcal{E}_i, \, \mathbb C)  .
\end{equation*}
Since each $\mathcal{E}_i$ is a smooth surface, by
Poincar$\acute{\textrm{e}}$ duality we deduce $H^4(\mathcal{E}_i, \,
\mathbb{C}) \cong \mathbb{C}$ and this completes the proof.
\end{proof}

\begin{proposition} \label{proposition:factorial}
With the notation above, if $b_4(X)=1$ then $X$ is factorial.
\end{proposition}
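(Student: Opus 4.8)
The plan is to show that the divisor class group $\textrm{Cl}\,X$ has rank $1$; since $\textrm{Pic}\,X=\mathbb{Z}$ by Theorem \ref{theorem:gr-lef}, this forces $G(X)=\textrm{Cl}\,X/\textrm{Pic}\,X$ to be torsion, i.e. $X$ to be $\mathbb{Q}$-factorial, and Proposition \ref{prop:Q-fact} then yields factoriality. As $X$ is a complete intersection with isolated singularities it is normal, and $\Sigma$ has codimension $3$ in $X$, so restriction gives an isomorphism $\textrm{Cl}\,X\cong\textrm{Cl}(X\setminus\Sigma)$. Now $X\setminus\Sigma$ is smooth and $\pi$ restricts to an isomorphism $\widetilde X\setminus\bigcup_i\mathcal{E}_i\cong X\setminus\Sigma$, so $\textrm{Cl}\,X\cong\textrm{Pic}(\widetilde X\setminus\bigcup_i\mathcal{E}_i)$, and the exact sequence for the complement of a divisor on the smooth variety $\widetilde X$ reads
\begin{equation*}
\bigoplus_{i=1}^{k}\mathbb{Z}\,[\mathcal{E}_i]\to\textrm{Pic}\,\widetilde X\to\textrm{Cl}\,X\to 0 .
\end{equation*}
Thus it is enough to bound $\textrm{rank}\,\textrm{Pic}\,\widetilde X$ above by $k+1$ and to show that $[\mathcal{E}_1],\dots,[\mathcal{E}_k]$ span a subgroup of rank $k$.

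To bound $\textrm{rank}\,\textrm{Pic}\,\widetilde X$ I would run the Leray spectral sequence of $\pi$, this time for the sheaf $\mathcal{O}_{\widetilde X}$, exactly as in the proof of Proposition \ref{proposition:defect}. One has $\pi_*\mathcal{O}_{\widetilde X}=\mathcal{O}_X$ (because $X$ is normal and $\pi$ is proper and birational) and $R^1\pi_*\mathcal{O}_{\widetilde X}=0$ (because each $\mathcal{E}_i$ is a hypersurface in $\mathbb{P}^3$, so $H^1(\mathcal{E}_i,\mathcal{O}_{\mathcal{E}_i}(j))=0$ for all $j$); together with $H^1(X,\mathcal{O}_X)=0$, valid since $X$ is a complete intersection threefold, the spectral sequence gives $H^1(\widetilde X,\mathcal{O}_{\widetilde X})=0$. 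Hence $\textrm{Pic}^0\widetilde X=0$, so $\textrm{Pic}\,\widetilde X$ is finitely generated and, via the exponential sequence, injects into $H^2(\widetilde X,\mathbb{Z})$; in particular $\textrm{rank}\,\textrm{Pic}\,\widetilde X\le b_2(\widetilde X)$. Since $\widetilde X$ is a smooth projective threefold, Poincaré duality gives $b_2(\widetilde X)=b_4(\widetilde X)$, and Proposition \ref{proposition:defect} together with the hypothesis $b_4(X)=1$ gives $b_4(\widetilde X)=b_4(X)+k=k+1$. Therefore $\textrm{rank}\,\textrm{Pic}\,\widetilde X\le k+1$.

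For the independence of the $[\mathcal{E}_i]$ I would use intersection theory on $\widetilde X$. Since $p_i$ is an ordinary $m_i$-ple point, the normal bundle of $\mathcal{E}_i$ in $\widetilde X$ is $\mathcal{O}_{\mathcal{E}_i}(-1)$, whence $[\mathcal{E}_i]^3=\int_{\mathcal{E}_i}c_1(\mathcal{O}_{\mathcal{E}_i}(-1))^2=\deg\mathcal{E}_i=m_i$, while $[\mathcal{E}_i]\cdot[\mathcal{E}_j]^2=0$ for $i\ne j$ because $\mathcal{E}_i\cap\mathcal{E}_j=\emptyset$. Hence a relation $\sum_i a_i[\mathcal{E}_i]=0$ in $\textrm{Pic}\,\widetilde X\otimes\mathbb{Q}$, intersected with $[\mathcal{E}_j]^2$, forces $a_jm_j=0$, i.e. $a_j=0$ for every $j$. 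Combining this with the previous bound and the exact sequence above, $\textrm{rank}\,\textrm{Cl}\,X=\textrm{rank}\,\textrm{Pic}\,\widetilde X-k\le 1$; as $\mathcal{O}_X(1)$ is an ample, hence non-torsion, class in $\textrm{Cl}\,X$, we conclude $\textrm{rank}\,\textrm{Cl}\,X=1$. Then $G(X)$ is finitely generated of rank $0$, hence torsion, so $X$ is $\mathbb{Q}$-factorial and therefore factorial by Proposition \ref{prop:Q-fact}.

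The step I expect to be the crux is the vanishing $H^1(\widetilde X,\mathcal{O}_{\widetilde X})=0$, equivalently the finite generation of $\textrm{Pic}\,\widetilde X$: without it one only controls $\textrm{Pic}\,\widetilde X\otimes\mathbb{Q}$ modulo the contribution of the (a priori nontrivial) abelian variety $\textrm{Pic}^0\widetilde X$, and the rank count collapses. This is precisely the place where the ordinariness of the singular points enters, since it makes each $\mathcal{E}_i$ a smooth hypersurface of $\mathbb{P}^3$ and so forces $R^1\pi_*\mathcal{O}_{\widetilde X}=0$. The remaining ingredients — excision for class groups, the self-intersection computation, and Poincaré duality on $\widetilde X$ — are routine.
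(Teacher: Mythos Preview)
Your argument is correct, but it follows a genuinely different route from the paper's. The paper never passes through the resolution $\widetilde X$ in the proof of this proposition: it takes an arbitrary irreducible surface $S\subset X$, cuts by a general hyperplane to land on the smooth complete intersection surface $X'$, uses the Lefschetz theorem (Theorem~\ref{teo:lefschetz}) on $X'$ to upgrade the rational homological relation $p[S]=q[H_X]$ coming from $b_4(X)=1$ to an integral one $[S']=r[H_{X'}]$, then uses $H^1(X',\mathcal O_{X'})=0$ and projective normality to realize $S'$ as $X'\cap F'$, and finally lifts $F'$ to a hypersurface $F\subset\mathbb P^n$ cutting out $S$ on $X$ via a standard Koszul/ideal-sheaf argument. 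In particular the paper's proof uses neither Proposition~\ref{proposition:defect} nor Proposition~\ref{prop:Q-fact}.

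Your approach is more structural: you compute $\mathrm{rank}\,\mathrm{Cl}\,X$ by relating it to $\mathrm{Pic}\,\widetilde X$ via excision, bound the latter through $b_2(\widetilde X)=b_4(\widetilde X)=k+1$, and then invoke $\mathbb Q$-factoriality $\Rightarrow$ factoriality. This has the virtue of making transparent why the count works and of reusing Proposition~\ref{proposition:defect}; on the other hand it leans on the ordinariness of the singular points in several places (smoothness of $\widetilde X$, the vanishing $R^1\pi_*\mathcal O_{\widetilde X}=0$, the normal bundle $\mathcal O_{\mathcal E_i}(-1)$, and Proposition~\ref{proposition:defect} itself), whereas the paper's hyperplane-section argument goes through for any isolated singularities once $b_4(X)=1$ is assumed. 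The paper's version is therefore slightly more portable as an independent lemma, while yours ties the two propositions of Section~\ref{section:lefschetz-type} together more tightly and makes the dependence on Proposition~\ref{prop:Q-fact} explicit.
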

\begin{proof}
 Assume $b_4(X)=1$ and let $S\subset X$ be any reduced, irreducible surface; we must show that $S$
  is a complete intersection in $X$.
  Let $S^\prime$, $X^\prime$
  be general hyperplane sections of $S$ and $X$ respectively, and
  let  $H_X \in \vert \mathcal{O}_X(1) \vert$,  $H_{X^\prime} \in \vert
  \mathcal{O}_{X^\prime}(1) \vert$. By assumption it
  follows that there exist integers $p, \, q$ such that
  \begin{equation*}
    pS\equiv_{\textrm{hom}} qH_X
  \end{equation*}
  on $X$, hence
  \begin{equation*}
    pS^\prime \equiv_{\textrm{hom}} qH_{X^\prime}
  \end{equation*}
  on $X^\prime$. Since $X^\prime$ is a smooth complete intersection surface,
  by Theorem
\ref{teo:lefschetz} the map
  \begin{equation*}
    H^2(\mathbb{P}^{n-1}, \, \mathbb{Z})\to H^2(
    X^\prime, \,
    \mathbb{Z})
  \end{equation*}
  is injective with torsion free cokernel, hence there exists an integer
  $r$ such that
  \begin{equation} \label{eq:homSH}
    S^\prime \equiv_{\textrm{hom}} rH_{X^\prime}
  \end{equation}
on $X^\prime$. Again by Theorem \ref{teo:lefschetz}, we deduce
$H^1(X^ \prime, \, \mathbb{Z})=0$, hence $H^1(X^ \prime, \,
\mathcal{O}_{X^ \prime})=0$. Therefore by looking at the exponential
sequence
\begin{equation*}
0 \to \mathbb{Z} \to \mathcal{O}_{X^\prime} \to \mathcal{O}_{X^
\prime}^* \to 0
\end{equation*}
we see that there is an injective map
\begin{equation*}
\textrm{Pic}\,X^ \prime=H^1(X^ \prime, \, \mathcal{O}_{X^ \prime}^*)
\hookrightarrow H^2(X^ \prime, \, \mathbb{Z}),
\end{equation*}
so \eqref{eq:homSH} implies
  \begin{equation} \label{lin:homSH}
    S^\prime\equiv_{\textrm{lin}} rH_{X^\prime}
  \end{equation}
  on $X^\prime$. Since
  any smooth complete intersection is projectively
  normal (\cite[ex. 8.4 (b) p. 188]{Ha77}), it follows that $S^\prime$
  is the complete intersection of $X^\prime$ with a hypersurface of
  $ \mathbb{P}^{n-1}$ of degree $r$, say $F^\prime$. Then the Koszul
   resolution of $\mathcal{I}_{S^\prime/
    \mathbb{P}^{n-1}}$ shows that
\begin{equation} \label{equation:h1(i)=0}
    H^1( \mathbb{P}^{n-1}, \, \mathcal{I}_{S^\prime/
    \mathbb{P}^{n-1}}(i))=0 \quad \textrm{for all } i \in \mathbb{Z}.
  \end{equation}
Applying the Snake Lemma (\cite[Chapter 2]{GM99}) to the diagram
\begin{equation*}
\xymatrix{ 0 \ar[r] & \mathcal{O}_{\mathbb{P}^n}(i-1) \ar[r] \ar[d]
& \mathcal{O}_{\mathbb{P}^n}(i) \ar[r] \ar[d] &
\mathcal{O}_{\mathbb{P}^{n-1}}(i) \ar[r] \ar[d] & 0 \\
0 \ar[r] & \mathcal{O}_S(i-1) \ar[r] & \mathcal{O}_S(i) \ar[r] &
\mathcal{O}_{S^\prime}(i) \ar[r] & 0}
\end{equation*}
we obtain the short exact sequence
\begin{equation*}
    \xymatrix{ 0 \ar[r]& \mathcal{I}_{S/ \mathbb{P}^n}(i-1) \ar[r]&
    \mathcal{I}_{S/ \mathbb{P}^n}(i) \ar[r] & \mathcal{I}_{S^\prime/
      \mathbb{P}^{n-1}}(i) \ar[r] & 0 }
\end{equation*}
which in turn gives, passing to cohomology,
  \begin{equation}   \label{equation:exsequence2}
    \xymatrix{  H^1( \mathbb{P}^n, \, \mathcal{I}_{S/
      \mathbb{P}^n}(i-1)) \ar[r] & H^1( \mathbb{P}^n, \, \mathcal{I}_{S/
    \mathbb{P}^n}(i) ) \ar[r]& H^1( \mathbb{P}^{n-1}, \, \mathcal{I}_{S^\prime/
      \mathbb{P}^{n-1}}(i) ).
    }
\end{equation}
  Since  $ H^1( \mathbb{P}^n, \, \mathcal{I}_{ S/ \mathbb{P}^n
  })=0$, by using \eqref{equation:h1(i)=0} and \eqref{equation:exsequence2}
  we find by induction that $H^1( \mathbb{P}^n, \, \mathcal{I}_{ S/
  \mathbb{P}^n}(i))=0$ for any $i\ge 0$. In particular the map
  \begin{equation*}
H^0(\mathbb{P}^n, \, \mathcal{I}_{S/
    \mathbb{P}^n}(r)) \to H^0(\mathbb{P}^{n-1}, \, \mathcal{I}_{S^\prime/
    \mathbb{P}^{n-1}}(r))
\end{equation*}
is surjective, so we can lift the hypersurface $F^\prime \in
H^0(\mathbb{P}^{n-1}, \, \mathcal{I}_{S^\prime/
\mathbb{P}^{n-1}}(r))$ to a hypersurface $F \in H^0(\mathbb{P}^{n},
\, \mathcal{I}_{S/ \mathbb{P}^{n}}(r))$. Moreover, such a  $F$ does
not contain $X$ (since $F^\prime$ does not contain $X^\prime$).
Hence it follows by degree reasons that $S$ is the complete
intersection of $X$ with $F$.
\end{proof}

\begin{remark} If $X \subset \mathbb{P}^4$ is a hypersurface of degree $d$ whose
only singularities are ordinary double points, then the converse of
Proposition \ref{proposition:factorial} also holds, namely $X$ is
factorial if and only if $b_4(X)=1$. In fact in the nodal case one
has $b_4(X)=1+ \delta$, where $\delta$ is the \emph{defect} of $X$,
namely the number of dependent conditions imposed by the reduced
subscheme $\Sigma$ to the homogeneous forms of degree $2d-5$. In
other words,
\begin{equation*}
\delta=k+ h^0(\mathbb{P}^4, \, \mathcal{I}_{\Sigma}(2d-5))
-h^0(\mathbb{P}^4, \, \mathcal{O}_{\mathbb{P}^4}(2d-5))=
h^1(\mathbb{P}^4, \, \mathcal{I}_{\Sigma}(2d-5))
\end{equation*}
and one has $\delta=0$ precisely when $X$ is factorial, see
\cite[Chapter 6]{dimca:sing}, \cite{Cy01}, \cite{Ch10}.

If the singular points of $X$ have higher multiplicity, then the
converse of Proposition \ref{proposition:factorial} is in general
no longer true. For instance, let $X \subset \mathbb{P}^4$ be a
cone over a surface $V \subset \mathbb{P}^3$ of degree $d \geq 4$
with $\textrm{Pic}\,V=\mathbb{Z}$. Then $X$ is factorial (Example
\ref{example:cone-anlocfac}) but \cite[formula (4.18)
p.169]{dimca:sing} shows that
\begin{equation*}
b_4(X)=b_2(V) = d^3-4d^2+6d-2 > 1.
\end{equation*}
\end{remark}

We are now ready to prove our Lefschetz-type result, namely Theorem
A of the Introduction.
\begin{theorem} \label{theorem:principal}
Let $Y \subset \mathbb{P}^n$ be a smooth, complete intersection
fourfold and let $X \subset Y$ be a reduced and irreducible
threefold which is the intersection of $Y$ with a hypersurface of
  $\mathbb{P}^n$.
  Suppose that the singular locus $\Sigma=\{p_1, \ldots, p_k \}$ of  $X$
  consists only of
   ordinary multiple points and denote by $\widetilde{X}\subset
\widetilde{Y}$ the
  strict transform of
  $X$ in the blowing-up $\widetilde{Y}:=\mathrm{Bl}_{\Sigma}Y$ of
  $Y$ at $\Sigma$.
If $\widetilde{X}$ is ample in $\widetilde{Y}$, then $X$ is
factorial.
\end{theorem}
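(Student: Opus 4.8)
The strategy is to reduce Theorem \ref{theorem:principal} to Proposition \ref{proposition:factorial}, i.e.\ to show that the ampleness of $\widetilde{X}$ in $\widetilde{Y}$ forces $b_4(X)=1$. The bridge is Proposition \ref{proposition:defect}, which already gives $b_4(\widetilde{X})=b_4(X)+k$; so it suffices to prove $b_4(\widetilde{X})=k+1$. The plan is to compute $b_4(\widetilde{X})$ by a Lefschetz-type comparison with $\widetilde{Y}$, using the hypothesis that $\widetilde{X}$ is an ample (effective) divisor in the smooth fourfold $\widetilde{Y}$.

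First I would apply the Lefschetz hyperplane theorem for the ample divisor $\widetilde{X}\subset\widetilde{Y}$. Since $\widetilde{Y}$ is smooth and $\widetilde{X}$ is a smooth ample divisor, the restriction $H^i(\widetilde{Y},\mathbb{Z})\to H^i(\widetilde{X},\mathbb{Z})$ is an isomorphism for $i\le 2$ (here $\dim\widetilde{X}=3$); in particular $H^4(\widetilde{Y},\mathbb{Z})\cong H^2(\widetilde{Y},\mathbb{Z})\cong H^2(\widetilde{X},\mathbb{Z})$ after accounting for Poincar\'e duality on the $3$-fold is not immediate, so rather than chase $H^4$ directly I would compute $b_2(\widetilde{X})$. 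By Lefschetz, $b_2(\widetilde{X})=b_2(\widetilde{Y})$. Now $\widetilde{Y}=\mathrm{Bl}_\Sigma Y$ with $Y$ a smooth complete intersection fourfold in $\mathbb{P}^n$, so $b_2(Y)=1$ by Theorem \ref{teo:lefschetz}, and blowing up $k$ points adds one class per exceptional divisor $E_i$, giving $b_2(\widetilde{Y})=k+1$. Hence $b_2(\widetilde{X})=k+1$. It then remains to relate $b_2(\widetilde{X})$ to $b_4(\widetilde{X})$: since $\widetilde{X}$ is a smooth projective $3$-fold, Poincar\'e duality gives $b_4(\widetilde{X})=b_2(\widetilde{X})=k+1$, and combining with Proposition \ref{proposition:defect} yields $b_4(X)=1$, so $X$ is factorial by Proposition \ref{proposition:factorial}.

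The step I expect to require the most care is the computation $b_2(\widetilde{Y})=k+1$, more precisely checking that all the hypotheses needed to apply the Lefschetz theorem (Theorem \ref{teo:lefschetz}) to $\widetilde{X}\subset\widetilde{Y}$ are in place — namely that $\widetilde{Y}$ is smooth (clear, as a blow-up of a smooth variety at smooth points), that $\widetilde{X}$ is smooth (noted already in the setup, since the $p_i$ are ordinary points), and that $\widetilde{X}$ is an \emph{ample} divisor (this is precisely the hypothesis of the theorem). A secondary subtlety is that Theorem \ref{teo:lefschetz} as stated applies to complete intersections of ample divisors in a smooth $Y$; here $\widetilde{X}$ is a single ample divisor in $\widetilde{Y}$, which is the case $e=1$, so it applies directly. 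One should also confirm that the identification $b_2(\widetilde{Y}) = b_2(Y) + k$ is valid over $\mathbb{C}$ via the standard blow-up formula for cohomology of a blow-up along a smooth center (here $k$ reduced points), which contributes one two-dimensional class for each exceptional $\mathbb{P}^3$. No other serious obstacle arises: once $b_2(\widetilde X)=k+1$ is established, Poincar\'e duality and the two cited propositions finish the argument immediately.
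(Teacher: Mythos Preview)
Your proposal is correct and follows essentially the same argument as the paper: compute $b_2(\widetilde{Y})=1+k$ via Lefschetz on $Y\subset\mathbb{P}^n$ and the blow-up formula, transfer this to $b_2(\widetilde{X})$ by Lefschetz for the ample divisor $\widetilde{X}\subset\widetilde{Y}$, use Poincar\'e duality on the smooth threefold $\widetilde{X}$ to get $b_4(\widetilde{X})=1+k$, and then combine Proposition~\ref{proposition:defect} with Proposition~\ref{proposition:factorial}. The paper's proof is identical in structure and in the ingredients invoked.
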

\begin{proof}
By Theorem \ref{teo:lefschetz} we have $h^2(Y, \,
\mathbb{C})=h^2(\mathbb{P}^n, \, \mathbb{C})=1$, so after blowing
up the $k$ points in $\Sigma$ we obtain
\begin{equation*}
h^2(\widetilde{Y},\, \mathbb{C})=h^2(Y, \, \mathbb{C})+k=1+k.
\end{equation*}
By assumption $\widetilde{X}$ is ample in $\widetilde{Y}$, so again
by Theorem \ref{teo:lefschetz} we can write
\begin{equation} \label{eq.Xtilde}
h^2(\widetilde{X}, \, \mathbb{C})=h^2(\widetilde{Y},\,
\mathbb{C})=1+k.
\end{equation}
Using Proposition \ref{proposition:defect},
Poincar$\acute{\textrm{e}}$ duality and \eqref{eq.Xtilde} we get
\begin{equation*}
b_4(X)+k=b_4(\widetilde{X})=h^2(\widetilde{X}, \, \mathbb{C})=1+k,
\end{equation*}
hence $b_4(X)=1$ and $X$ is factorial by Proposition
\ref{proposition:factorial}.
\end{proof}

\begin{corollary} \label{cor:princ-Lefschetz}
Same notation as before. If $\widetilde{X}$ is ample in
$\widetilde{Y}$, then the restriction maps $\emph{Cl}
\,\mathbb{P}^n \to \emph{Cl}\,X$ and $\emph{Cl}\,Y \to
\emph{Cl}\,X$ are both isomorphisms.
\end{corollary}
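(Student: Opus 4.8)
The plan is to derive Corollary \ref{cor:princ-Lefschetz} directly from Theorem \ref{theorem:principal} together with the structural facts about $X$ already assembled in the Preliminaries. By hypothesis $\widetilde{X}$ is ample in $\widetilde{Y}$, so Theorem \ref{theorem:principal} applies and tells us that $X$ is factorial. Since $X$ is a reduced, irreducible complete intersection threefold in $\mathbb{P}^n$ with only isolated singularities, it is in particular projectively normal and nonsingular in codimension $1$. Hence Proposition \ref{proposition:fact-nonsing-codim-1} applies: factoriality of $X$ is \emph{equivalent} to the restriction map $\textrm{Cl}\,\mathbb{P}^n \to \textrm{Cl}\,X$ being an isomorphism. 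This immediately yields the first claimed isomorphism.

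For the second map, $\textrm{Cl}\,Y \to \textrm{Cl}\,X$, I would factor the composite $\textrm{Cl}\,\mathbb{P}^n \to \textrm{Cl}\,Y \to \textrm{Cl}\,X$. The point is that $Y$ is smooth, so $\textrm{Cl}\,Y = \textrm{Pic}\,Y$, and $Y \subset \mathbb{P}^n$ is a smooth complete intersection fourfold; by Theorem \ref{theorem:gr-lef} (applied to $Y$, which has dimension $\geq 3$) the map $\textrm{Pic}\,\mathbb{P}^n \to \textrm{Pic}\,Y$ is an isomorphism, so $\textrm{Cl}\,\mathbb{P}^n \to \textrm{Cl}\,Y$ is an isomorphism. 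Now in the commutative triangle
\begin{equation*}
\xymatrix{
\textrm{Cl}\,\mathbb{P}^n \ar[r] \ar[dr] & \textrm{Cl}\,Y \ar[d] \\
 & \textrm{Cl}\,X
}
\end{equation*}
two of the three arrows (the horizontal one and the diagonal one) are isomorphisms, so the remaining arrow $\textrm{Cl}\,Y \to \textrm{Cl}\,X$ is an isomorphism as well.

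There is essentially no hard step here: the corollary is a formal consequence of Theorem \ref{theorem:principal} and the two Grothendieck--Lefschetz-type statements for $\mathbb{P}^n$ and for the smooth ambient $Y$. The only point requiring a word of care is checking that the triangle above genuinely commutes, i.e.\ that the restriction $\textrm{Cl}\,\mathbb{P}^n \to \textrm{Cl}\,X$ coincides with first restricting to $Y$ and then to $X$; this is clear because all three maps are induced by pulling back Weil (indeed Cartier) divisors along the inclusions $X \hookrightarrow Y \hookrightarrow \mathbb{P}^n$, and pullback of Cartier divisors is functorial. One should also note that on $\mathbb{P}^n$ and on the smooth $Y$ the class group and Picard group agree, so the various statements phrased for $\textrm{Pic}$ transfer verbatim to $\textrm{Cl}$, and the class $\mathcal{O}_X(1)$ generates $\textrm{Cl}\,X$ under either identification.
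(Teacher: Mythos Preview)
Your proof is correct and follows essentially the same route as the paper's: invoke Theorem \ref{theorem:principal} to get factoriality of $X$, translate this via Proposition \ref{proposition:fact-nonsing-codim-1} into the isomorphism $\textrm{Cl}\,\mathbb{P}^n \to \textrm{Cl}\,X$, then use Theorem \ref{theorem:gr-lef} (together with $\textrm{Cl}\,Y=\textrm{Pic}\,Y$ for smooth $Y$) to conclude via the commutative triangle. The paper's proof is more terse but logically identical.
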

\begin{proof}
The map $\textrm{Cl}\,\mathbb{P}^n \to \textrm{Cl}\,X$ is an
isomorphism by Theorem \ref{theorem:principal}, whereas the map
$\textrm{Cl} \,\mathbb{P}^n \to \textrm{Cl} \,Y$ is an isomorphism
by Theorem \ref{theorem:gr-lef}. Hence $\textrm{Cl}\, Y \to
\textrm{Cl}\,X $ is an isomorphism as well.
\end{proof}
Theorem \ref{theorem:gr-lef-gen} and Corollary
\ref{cor:princ-Lefschetz} imply that, if $\widetilde X$ is ample in
$\widetilde Y$, there is a commutative diagram
\begin{equation}
\xymatrix{
\textrm{Cl}\,Y \ar[r] \ar[d]_{\eta^*} & \textrm{Cl}\,X \ar[d]^{\pi^*} \\
 \textrm{Cl}\,\widetilde{Y} \ar[r] &
 \textrm{Cl}\,\widetilde{X}}
\end{equation}
whose horizontal arrows are both isomorphisms. Moreover the
pull-back map $\pi^* \colon \textrm{Cl}\,X \to
\textrm{Cl}\,\widetilde{X}$ (which can be defined because
$\textrm{Cl}\,X = \textrm{Pic}\, X$) is injective.

\begin{remark} \label{rem:theA}
The converse of Theorem \ref{theorem:principal} is in general not
true. In fact, let $V \subset \mathbb{P}^3$ be a smooth surface of
degree $d \geq 4$ such that $\textrm{Pic}\,V = \mathbb{Z}$ and let
$X \subset \mathbb{P}^4$ be the cone over $V$. Then the divisor
$\widetilde{X} \subset \widetilde{\mathbb{P}}^4$ belongs to the
linear system $|d(H-E)|$, hence $\widetilde{X}^4=0$ and by
Nakai-Moishezon criterion $\widetilde X$ is not ample. However, $X$
is factorial (see Example \ref{example:cone-anlocfac}).
\end{remark}

\section{Applications} \label{section:applications}

Let us give now some applications of the previous results. We
start by showing that if a threefold hypersurface in a good
ambient space has ``few" singularities, which are all ordinary
points, then $X$ is factorial.

\begin{theorem} \label{theorem:few-points}
Let $Y \subset \mathbb{P}^n$ be a smooth, complete intersection
fourfold and $X \subset Y$  be a reduced, irreducible threefold,
which is complete intersection of $Y$ with a hypersurface of
degree $d$. Assume that the singular locus of $X$ consists
precisely of $k$ ordinary multiple points $p_1, \ldots, p_k$ of
multiplicity $m_1, \ldots, m_k$. If
\begin{equation} \label{eq:linear-bound-1-gen}
\sum_{i=1}^k m_i < d
\end{equation}
then $X$ is factorial.
\end{theorem}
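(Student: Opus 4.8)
The strategy is to reduce Theorem \ref{theorem:few-points} to the Lefschetz-type criterion of Theorem \ref{theorem:principal}: it suffices to prove that, under the hypothesis $\sum_{i=1}^k m_i < d$, the strict transform $\widetilde{X} \subset \widetilde{Y} = \mathrm{Bl}_{\Sigma}Y$ is an ample divisor. First I would identify the class of $\widetilde{X}$ in $\mathrm{Pic}\,\widetilde{Y}$. Since $X$ is cut out on $Y$ by a hypersurface of degree $d$, and $X$ has an ordinary $m_i$-ple point at $p_i$, the total transform $\eta^*X$ equals $\widetilde{X} + \sum_{i=1}^k m_i E_i$, so
\begin{equation*}
\widetilde{X} \equiv_{\mathrm{lin}} dH - \sum_{i=1}^k m_i E_i,
\end{equation*}
where $H = \eta^*\mathcal{O}_Y(1)$ and the $E_i$ are the exceptional divisors of $\eta$.

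Next I would establish ampleness of $dH - \sum m_i E_i$ on $\widetilde{Y}$. The cleanest route is to write, using $\sum m_i < d$ so that $d - \sum m_j \geq 1$,
\begin{equation*}
dH - \sum_{i=1}^k m_i E_i = \Big(d - \sum_{j=1}^k m_j\Big) H + \sum_{i=1}^k m_i (H - E_i),
\end{equation*}
a nonnegative combination of $H$ (which is nef, being the pullback of an ample class) and the divisors $H - E_i$. So the key point becomes: each $H - E_i$ is nef, and the sum is actually ample. For this I would argue on the blow-up of a single point: $\widetilde{Y} \to \mathrm{Bl}_{p_i}Y$ is a composition of blow-ups, and on $\mathrm{Bl}_{p_i}Y$ the class $H - E_i$ is base-point-free — indeed it is the pullback of $\mathcal{O}_{\mathbb{P}^n}(1)$ under the morphism $\mathrm{Bl}_{p_i}\mathbb{P}^n \to \mathbb{P}^n$ composed with the inclusion, i.e. it is globally generated by the hyperplanes through $p_i$ — hence nef, and its further pullback to $\widetilde{Y}$ stays nef. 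To upgrade to ampleness I would invoke the Nakai–Moishezon/Kleiman criterion, or more simply cite \cite[Corollary 1.4.10]{lazarsfeld:p1} in the form: a sum of a nef divisor and an ample divisor is ample, after first peeling off a genuinely ample piece. Concretely, since $d - \sum m_j \geq 1$ I can split off $\varepsilon H$ for small positive rational $\varepsilon$ together with a small ample perturbation supplied by the fact that $H - \sum E_i$ type combinations with small coefficients are ample on a blow-up at finitely many points (this is exactly the phenomenon quantified in Theorem \ref{theorem:ballico} and Corollary \ref{corollary:amplenessbound}, which I can pull back from the ambient $\mathbb{P}^n$). Thus $\widetilde{X}$ is ample in $\widetilde{Y}$.

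With ampleness of $\widetilde{X}$ in hand, Theorem \ref{theorem:principal} applies verbatim — the hypotheses on $X$ (reduced, irreducible, complete intersection of $Y$ with a hypersurface, singular locus consisting of ordinary multiple points) are precisely those required — and we conclude that $X$ is factorial.

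**Main obstacle.** The genuine content is the ampleness verification, and the subtle point is passing from \emph{nef} to \emph{ample}: the naive decomposition above only exhibits $\widetilde{X}$ as a sum of nef classes, which gives nef but not ample. The fix is to observe that, because the $p_i$ are points (codimension $4 \geq 2$) and the $H - E_i$ are pulled back from the blow-up of $\mathbb{P}^n$ — where by Theorem \ref{theorem:ballico} a small positive combination $dH - \sum E_i$ with $d^n > k$, in particular any $aH - \sum E_i$ with $a$ large, is ample — one can, after fixing a positive rational $\varepsilon < d - \sum m_j$, write $\widetilde{X} = \varepsilon\,(NH - \sum E_i) + (\text{nef})$ for suitable $N$, where $NH - \sum E_i$ is ample by Corollary \ref{corollary:amplenessbound} (here we do not need the points general, since $\varepsilon N$ can be taken small relative to the multiplicities, so the nef remainder absorbs the negative exceptional contributions). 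This makes $\widetilde{X}$ a sum of an ample and a nef divisor, hence ample by \cite[Corollary 1.4.10]{lazarsfeld:p1}. Everything else is a direct appeal to Theorem \ref{theorem:principal}.
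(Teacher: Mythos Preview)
Your overall strategy is correct and matches the paper's: identify $\widetilde{X} \equiv dH - \sum m_i E_i$, use the decomposition
\begin{equation*}
\widetilde{X} = \Big(d - \sum_j m_j\Big) H + \sum_i m_i (H - E_i),
\end{equation*}
and invoke Theorem \ref{theorem:principal} once ampleness is established. The paper uses exactly this decomposition.

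However, your passage from nef to ample has a gap. You propose to peel off an ample piece $\varepsilon(NH - \sum E_i)$ and cite Corollary \ref{corollary:amplenessbound} for its ampleness. But that corollary, like Theorem \ref{theorem:ballico} from which it is derived, requires the blown-up points to be \emph{general} in $\mathbb{P}^n$; here the $p_i$ are the singular points of $X$, which are in no sense general. Your parenthetical remark does not address this: it explains why the \emph{remainder} stays nef regardless of the position of the points, but says nothing about why $NH - \sum E_i$ should be ample without the generality hypothesis. (It \emph{is} ample for $N \gg 0$ and arbitrary points, by the elementary fact that the ideal sheaf of a finite set becomes globally generated and very ample after a high enough twist --- so your argument is easily repaired --- but that is not what you invoked, and Ballico's bound $d^n>k$ is in general false for special configurations.)

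The paper avoids this detour entirely. From the same decomposition it observes that $|(d - \sum m_j)H|$ and each $|m_i(H - E_i)|$ are base-point-free, so $\mathcal{O}_{\widetilde{Y}}(\widetilde{X})$ is globally generated; then, by \cite[Corollary 1.2.15]{lazarsfeld:p1}, ampleness reduces to checking $\widetilde{X} \cdot \widetilde{C} > 0$ for every irreducible curve $\widetilde{C}$. If $\widetilde{C} \subset E_i$ this is immediate from $-E_i|_{E_i} \cong \mathcal{O}_{\mathbb{P}^3}(1)$, and otherwise the summand $(d - \sum m_j)\, H \cdot \widetilde{C}$ is already strictly positive by the projection formula, while each $(H - E_i) \cdot \widetilde{C} \ge 0$ because $|H-E_i|$ is base-point-free. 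This is shorter, works for arbitrary point configurations, and needs no auxiliary ampleness input.
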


\begin{proof}
We use the same notation as in Section \ref{section:lefschetz-type}.
By Theorem \ref{theorem:principal}, it is sufficient to show that
$\widetilde{X}$ is an ample divisor in $\widetilde{Y}$. Since each
$p_i \in X$ is an ordinary singular point, $\widetilde{X}$ is smooth
and we have
\begin{equation} \label{eq:ThmB}
\widetilde{X}=dH-\sum_{i=1}^k m_iE_i=\bigg(d-\sum_{i=1}^km_i
\bigg)H+ \sum_{i=1}^k m_i(H-E_i).
\end{equation}
By \eqref{eq:linear-bound-1-gen}, the linear system
$|\big(d-\sum_{i=1}^km_i \big)H|$ is base point free; since the same
is clearly true for  $|m_i(H-E_i)|$, equation \eqref{eq:ThmB} shows
that $\mathcal{O}_{\widetilde{Y}}(\widetilde{X})$ is a globally
generated line bundle on $\widetilde{Y}$. By \cite[Corollary 1.2.15
p. 28]{lazarsfeld:p1} it remains only to  prove that $\widetilde{X}$
has positive intersection with every effective, irreducible curve
$\widetilde{C} \subset \widetilde{Y}$. If $\widetilde{C} \subset
E_i$ for some $i$ this is clear. Otherwise $C:= \eta_*
\widetilde{C}$ is an effective and irreducible curve on $Y$ and by
the projection formula one has
\begin{equation*}
H \cdot \widetilde{C} = \eta^* \mathcal{O}_Y (1) \cdot \widetilde{C}
= \mathcal{O}_Y (1) \cdot \eta_* \widetilde{C}= \mathcal{O}_Y (1)
\cdot C >0.
\end{equation*}
On the other hand, $(H-E_i) \cdot \widetilde{C} \geq 0$ because
$|H-E_i|$ is base-point free. Then \eqref{eq:ThmB}
 implies  $\widetilde{X} \cdot \widetilde{C}
>0$ and we are done.
\end{proof}

As far as we know, Theorem \ref{theorem:few-points} provides the
first factoriality criterion for complete intersection threefolds
in $\mathbb{P}^n$ with ordinary singularities. For \emph{nodal}
threefolds in $\mathbb{P}^4$ and $\mathbb{P}^5$ some sharper
results were previously obtained in \cite{Ch10} and
\cite{kosta:ci}, respectively. See also \cite{Sab04} and
\cite{cd:ci}. When $X \subset \mathbb{P}^4$ and $k=1$, namely when
we have exactly one (ordinary) singular point, Theorem
\ref{theorem:few-points} can be deduced from \cite[Theorem 4.17 p.
214]{dimca:sing}. Since we find this other proof of independent
interest, for the sake of completeness we include it in the
Appendix.

\begin{example} \label{example:unique-sing}
Let $m, \, d \in \mathbb{N}$ with $m < d$ and take a homogeneous
polynomial $f_m(x_0, \, x_1, \, x_2, \, x_3)$
  of degree $m$
  such that $V:=V(f_m)\subset \mathbb{P}^3$ is a smooth surface.
  Given general forms $f_{m+1}, \, f_{m+2}, \ldots, f_d \in
  \mathbb{C}[x_0, \, x_1, \,x_2, \,x_3]$, of respective degrees $m+1, \,
  m+2,  \ldots, d$, the polynomial
  \begin{equation*}
    f:= x_4^{d-m}f_m+x_4^{d-m-1}f_{m+1}+\cdots + f_d
  \end{equation*}
  defines a hypersurface $X \subset \mathbb{P}^4$ of degree $d$ with
  a unique singular point, namely
  $p=[0:0:0:0:1]$, which is ordinary of multiplicity $m$.
  Then $X$ is factorial by Theorem \ref{theorem:few-points}.
\end{example}

\begin{remark} \label{rem:kollar}
In the statement of Theorem \ref{theorem:few-points}, the
condition that all the singularities are ordinary is an essential
one, as shown by the following example due to Koll\'{a}r, see
\cite[p. 108]{Mel04}. Consider a general quartic hypersurface $X
\subset \mathbb{P}^4$ whose
 defining polynomial is in the span of the monomials $\{x_0^4, \, x_1^4, \,
(x_4^2x_3+x_2^3)x_0, \, x_3^3x_1, \, x_4^2x_1^2 \}$. Then $X$ has
the unique singular point $p=[0: \,0: \,0: \,0: \,1]$, but it is
not factorial because it contains the plane $\{x_0=x_1=0\}$.
Notice that $p \in X$ is a non-ordinary double point, because the
corresponding tangent cone is a cone over a singular quadric
surface in $\mathbb{P}^3$ (in fact, $p$ is a so-called
$cA_1$-singularity, see \cite{Re87}).
\end{remark}

If the bound \eqref{eq:linear-bound-1-gen} is not satisfied, we
can still give a factoriality criterion for a hypersurface $X
\subset \mathbb{P}^4$ provided that its singularities are in
general position and they all have the same multiplicity.

\begin{theorem} \label{theorem:worstthannodes}
  Let $\Sigma:=\{p_1,\ldots,p_k \}$ be a set of $k$ distinct,
  general points in $\mathbb{P}^4$ and let $d, \, m$ be
  positive integers with $d \geq m$.
\begin{itemize}
\item[$\boldsymbol{(i)}$] If
  \begin{equation} \label{equation:nonempty}
    \bigg \lfloor \frac{d+5}{m+4} \bigg \rfloor^4 > k
\end{equation}
  then there exists a hypersuface $X \subset \mathbb{P}^4$ of degree $d$,
  with
  $k$ ordinary $m$-ple points at $p_1,\ldots,p_k$ and no other
  singularities. \\
\item[$\boldsymbol{(ii)}$]  If the stronger condition
  \begin{equation}  \label{equation:numample}
   \min \bigg\{ \bigg \lfloor \frac{d+5}{m+4}  \bigg \rfloor^4, \, \bigg
   \lfloor \frac{d}{m}
   \bigg \rfloor^4 \bigg\} > k
\end{equation}
  holds, then any hypersurface $X$ as in part $\boldsymbol{(i)}$
  is factorial.
\end{itemize}
\end{theorem}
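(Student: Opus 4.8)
The plan is to produce the hypersurface $X$ by choosing its defining polynomial from a suitable linear system with prescribed multiplicities at the points $p_i$, and then to apply Theorem~\ref{theorem:principal} (Theorem A) in the form of the ampleness criterion supplied by Corollary~\ref{corollary:amplenessbound}. Concretely, I would work on the blow-up $\pi\colon \widetilde{\mathbb{P}}^4 \to \mathbb{P}^4$ at $\Sigma$, with exceptional divisors $E_1,\ldots,E_k$ and $H=\pi^*\mathcal{O}_{\mathbb{P}^4}(1)$, and consider the linear system $|dH - m\sum_{i=1}^k E_i|$, which corresponds to degree-$d$ hypersurfaces of $\mathbb{P}^4$ having multiplicity at least $m$ at each $p_i$.

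For part $\boldsymbol{(i)}$ the first step is to show that, under \eqref{equation:nonempty}, a \emph{general} member $X$ of this system has an \emph{ordinary} $m$-ple point at each $p_i$ and no other singular point. The idea is the usual Bertini-with-assigned-base-points argument: the tangent cone of a general such $X$ at $p_i$ is cut out by a general form of degree $m$ in the $\mathbb{P}^3$ of directions at $p_i$, so it is a cone over a smooth surface provided the restriction map $H^0(\mathbb{P}^4,\mathcal{I}_\Sigma^m(d)) \to H^0(\mathbb{P}^3,\mathcal{O}(m))$ at each $p_i$ is surjective and the analogous ``$(m+1)$-jet'' map controls that no worse singularity appears; away from $\Sigma$ one wants the system to be base-point free so that ordinary Bertini applies. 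The key numerical input making all these maps surjective simultaneously is exactly an ampleness statement: Corollary~\ref{corollary:amplenessbound} with $a = d+? $ chosen so that $(m+4)\mid$ fits, i.e. the divisor $(d+5)H-(m+4)\sum E_i$ becomes ample under \eqref{equation:nonempty}, which gives the needed (very) ampleness-type surjectivities after twisting down; here the $+5 = n+1$ and $+4 = m + (m_i-\text{correction})$ bookkeeping accounts for adjunction and for the jet one needs to separate. This is the step I expect to be the main obstacle: one has to be careful that $\lfloor (d+5)/(m+4)\rfloor^4 > k$ is precisely the threshold guaranteeing that $(d+5)H - (m+4)\sum E_i$ is ample (via Theorem~\ref{theorem:ballico}/Corollary~\ref{corollary:amplenessbound}), and then translate ``ample'' into the concrete surjectivity/base-point-freeness statements that feed Bertini and the ordinariness of the tangent cones.

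For part $\boldsymbol{(ii)}$, given such an $X$, I would verify that the strict transform $\widetilde{X} \subset \widetilde{\mathbb{P}}^4$ is ample, and then invoke Theorem~\ref{theorem:principal} to conclude factoriality. Since each $p_i$ is an ordinary $m$-ple point, $\widetilde{X}$ is smooth and $\widetilde{X} \equiv_{\textrm{lin}} dH - m\sum_{i=1}^k E_i$. Writing $d = \lfloor d/m\rfloor m + r$ with $0 \le r < m$, we get $dH - m\sum E_i = m(\lfloor d/m\rfloor H - \sum E_i) + rH$; by Theorem~\ref{theorem:ballico} the first summand is ample precisely when $\lfloor d/m\rfloor^4 > k$, and $rH$ is nef, so $\widetilde{X}$ is ample by \cite[Corollary~1.4.10]{lazarsfeld:p1}. (This is exactly the proof of Corollary~\ref{corollary:amplenessbound} with $a=d$, $b=m$.) The hypothesis \eqref{equation:numample} includes $\lfloor d/m\rfloor^4 > k$ as one of the two minimands, so $\widetilde{X}$ is ample in $\widetilde{\mathbb{P}}^4 = \widetilde{Y}$ (taking $Y = \mathbb{P}^4$), and Theorem~\ref{theorem:principal} yields that $X$ is factorial. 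The only subtlety in this part is purely notational: matching the setup of Theorem~\ref{theorem:principal} (which is stated for $X \subset Y$ a complete intersection fourfold) to the hypersurface case $Y = \mathbb{P}^4$, which is immediate.
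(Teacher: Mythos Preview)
Your approach is essentially the same as the paper's, and part $\boldsymbol{(ii)}$ is exactly right: $\widetilde{X}\equiv_{\textrm{lin}} dH-m\sum E_i$ is ample by Corollary~\ref{corollary:amplenessbound} once $\lfloor d/m\rfloor^4>k$, and Theorem~\ref{theorem:principal} finishes. For part $\boldsymbol{(i)}$ you have the right outline but you should make the mechanism explicit rather than gesturing at ``(very) ampleness-type surjectivities.'' The paper proceeds as follows: since $d\ge m$ one has $\lfloor (d+4)/(m+3)\rfloor^4\ge\lfloor (d+5)/(m+4)\rfloor^4>k$, so \emph{both} $(d+4)H-(m+3)\sum E_i$ and $(d+5)H-(m+4)\sum E_i$ are ample by Corollary~\ref{corollary:amplenessbound}; Kodaira vanishing (with $K_{\widetilde{\mathbb{P}}^4}=-5H+3\sum E_i$) then gives $H^1\big((d-1)H-m\sum E_i\big)=0$ and $H^1\big(dH-(m+1)\sum E_i\big)=0$. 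These two vanishings, fed into the obvious short exact sequences obtained by restricting $\mathcal{O}(dH-m\sum E_i)$ to a general $H$ and to $\sum E_i$ respectively, yield surjectivity of the restriction maps to $H$ and to each $E_j$, hence base-point freeness of $|dH-m\sum E_i|$ and completeness of its restriction to each $E_j\cong\mathbb{P}^3$; Bertini then produces a smooth $\widetilde{X}$ whose pushforward is the desired $X$. Your sketch only names the second ampleness; you need the first one too (it comes for free from $d\ge m$), and you should replace the vague ``translate ample into surjectivity'' with the concrete Kodaira-plus-restriction-sequence argument.
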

\begin{proof}
We set $Y=\mathbb{P}^4$ and we use the same notation as in Section
\ref{section:lefschetz-type}. Since $d \geq m$ we have
\begin{equation*}
\bigg \lfloor \frac{d+4}{m+3} \bigg \rfloor^4
 > \bigg \lfloor    \frac{d+5}{m+4} \bigg \rfloor^4>k,
   \end{equation*}
so by Corollary \ref{corollary:amplenessbound} the two divisors
  \begin{equation*}
  \begin{split}
  (d+4)H-(m+3)\sum_{i=1}^k E_i \quad \textrm{and} \quad
  (d+5)H-(m+4)\sum_{i=1}^k E_i
\end{split}
  \end{equation*}
are ample on $\widetilde{Y}$ and by Kodaira vanishing theorem we
deduce
  \begin{equation} \label{eq:kodaira}
    H^1 \bigg(\widetilde{Y}, \, (d-1)H-m \sum_{i=1}^k E_i \bigg)=0 \quad
    \textrm{and} \quad
    H^1 \bigg(\widetilde{Y}, \, dH-(m+1) \sum_{i=1}^k
    E_i \bigg)=0.
  \end{equation}
By using the two exact sequences
\begin{equation*}
0 \to \mathcal{O}_{\widetilde{Y}} \bigg((d-1)H-m \sum_{i=1}^k E_i
\bigg) \to \mathcal{O}_{\widetilde{Y}} \bigg(dH-m \sum_{i=1}^k E_i
\bigg) \to \mathcal{O}_H \bigg(dH-m \sum_{i=1}^k E_i \bigg) \to 0
\end{equation*}
\begin{equation*}
0 \to \mathcal{O}_{\widetilde{Y}} \bigg(dH-(m+1) \sum_{i=1}^k E_i
\bigg) \to \mathcal{O}_{\widetilde{Y}} \bigg(dH-m \sum_{i=1}^k E_i
\bigg) \to \mathcal{O}_{\sum_{i=1}^k E_i} \bigg(dH-m \sum_{i=1}^k
E_i \bigg) \to 0
\end{equation*}
and \eqref{eq:kodaira}, we now see that the restriction maps
\begin{equation} \label{equation:complete.H}
H^0 \bigg(\widetilde{Y}, \,dH-m \sum_{i=1}^k E_i \bigg) \to H^0
\bigg(H, \, dH-m \sum_{i=1}^k E_i \bigg)
\end{equation}
\begin{equation} \label{equation:complete.Ei}
H^0 \bigg(\widetilde{Y}, \,dH-m \sum_{i=1}^k E_i \bigg) \to H^0
\bigg(E_j, \, dH-m \sum_{i=1}^k E_i \bigg), \quad j=1, \ldots, k
\end{equation}
are all surjective. This means that the linear system $\vert dH- m
\sum_{i=1}^k  E_i  \vert $ restricts to a complete linear
  system on the general element of $\vert H
  \vert$ and on each $E_j$; therefore $\vert dH- m \sum_{i=1}^k  E_i  \vert
  $ is base-point free and by Bertini's theorem its
  general element $\widetilde{X}$ is smooth and irreducible.
Then $X:=\eta_*(\widetilde{X})$ is the desired hypersurface, and
this proves $\boldsymbol{(i)}$.

Finally, if \eqref{equation:numample} holds then $\widetilde{X}$
is ample in $\widetilde{Y}$
  by Corollary \ref{corollary:amplenessbound}, hence $X$ is factorial
by Theorem \ref{theorem:principal}. This proves
$\boldsymbol{(ii)}$.
\end{proof}

\begin{corollary} \label{corollary:worstthannodes}
If $d \geq \frac{5}{4}m$ and \emph{\eqref{equation:nonempty}}
holds, than \emph{any} hypersurface as in Theorem
\emph{\ref{theorem:worstthannodes}}, part $\boldsymbol{(i)}$ is
factorial.
\end{corollary}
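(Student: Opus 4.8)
The plan is to deduce the statement directly from Theorem~\ref{theorem:worstthannodes}. By part~(i) of that theorem, the hypothesis \eqref{equation:nonempty} already guarantees the \emph{existence} of a hypersurface $X \subset \mathbb{P}^4$ of degree $d$ with $k$ ordinary $m$-ple points at $p_1,\ldots,p_k$ and no other singularities; by part~(ii), \emph{every} such $X$ is factorial as soon as the stronger numerical condition \eqref{equation:numample} holds. Thus it suffices to check that the two hypotheses $d \geq \frac{5}{4}m$ and \eqref{equation:nonempty} together imply \eqref{equation:numample}.

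Since \eqref{equation:nonempty} is exactly $\lfloor \frac{d+5}{m+4} \rfloor^4 > k$, while \eqref{equation:numample} asks for $\min\{ \lfloor \frac{d+5}{m+4}\rfloor^4, \lfloor \frac{d}{m}\rfloor^4 \} > k$, the only thing left to verify is $\lfloor \frac{d}{m} \rfloor \geq \lfloor \frac{d+5}{m+4} \rfloor$. I would obtain this from the elementary observation that $d \geq \frac{5}{4}m$ is equivalent to $4d \geq 5m$, hence to $d(m+4) \geq m(d+5)$, i.e. to $\frac{d}{m} \geq \frac{d+5}{m+4}$ as rational numbers; monotonicity of the floor function then gives $\lfloor \frac{d}{m}\rfloor \geq \lfloor \frac{d+5}{m+4}\rfloor$, and raising to the fourth power and combining with \eqref{equation:nonempty} yields $\lfloor \frac{d}{m}\rfloor^4 \geq \lfloor \frac{d+5}{m+4}\rfloor^4 > k$. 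Therefore \eqref{equation:numample} holds.

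Applying Theorem~\ref{theorem:worstthannodes}(ii) then finishes the proof. There is no real obstacle here: the entire content of the corollary is the remark that the threshold $d \geq \frac{5}{4}m$ is precisely the condition making $\frac{d}{m}$ dominate $\frac{d+5}{m+4}$, so that under it the weaker hypothesis \eqref{equation:nonempty} automatically upgrades to the stronger one \eqref{equation:numample} used in Theorem~\ref{theorem:worstthannodes}(ii).
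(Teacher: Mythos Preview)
Your argument is correct and follows exactly the same route as the paper: both show that $d \geq \tfrac{5}{4}m$ is equivalent to $\tfrac{d}{m} \geq \tfrac{d+5}{m+4}$, whence $\lfloor \tfrac{d}{m}\rfloor^4 \geq \lfloor \tfrac{d+5}{m+4}\rfloor^4 > k$, so that \eqref{equation:numample} holds and Theorem~\ref{theorem:worstthannodes}(ii) applies. The paper's proof is simply a compressed version of yours.
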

\begin{proof}
The assumptions imply
\begin{equation*}
\bigg \lfloor \frac{d}{m} \bigg \rfloor^4  \geq \bigg \lfloor
\frac{d+5}{m+4} \bigg \rfloor^4 > k,
\end{equation*}
so the claim follows by Theorem \ref{theorem:worstthannodes}, part
$\boldsymbol{(ii)}$.
\end{proof}

The following examples show that the numerical inequalities in
Theorem \ref{theorem:worstthannodes} are not sharp.
\begin{itemize}
\item Let $X \subset \mathbb{P}^4$ be a hypersurface of degree $3$
with two ordinary double points and no other singularities. Then
\eqref{equation:numample} is not satisfied, but $X$ is factorial
(\cite{Ch10}).
\item Let $V \subset \mathbb{P}^3$ be a smooth
surface of degree $d \geq 4$ such that $\textrm{Pic}\,V =
\mathbb{Z}$ and let $X \subset \mathbb{P}^4$ be the cone over $V$.
Then \eqref{equation:numample} is not satisfied, but $X$ is
factorial (Example \ref{example:cone-anlocfac}).
\item Let $X
\subset \mathbb{P}^4$ be a hypersurface of degree $d$ with a
unique singularity, which is ordinary of multiplicity $m$. If $m<
d < 2m+3$ then \eqref{equation:numample} is not satisfied, but $X$
is factorial (Theorem \ref{theorem:few-points}).
\end{itemize}

\section{Non-factorial examples} \label{section:non-factorial}

This section is devoted to the construction of some examples of
non-factorial
 hypersurfaces in $\mathbb{P}^4$ with only ordinary
multiple points as singularities. They generalize the examples of
non-factorial, nodal hypersurfaces  described in Example
\ref{example:fact-2}. More precisely, we prove the following result.

\begin{proposition} \label{prop:non-factorial}
For any pair $(t, \, \delta)$ of positive integers there exists a
non-factorial hypersurface $X \subset \mathbb{P}^4$ of degree $d$
with $k$ ordinary $m$-ple points as only singularities, where
\begin{equation*}
d=\delta t+1, \quad k=\delta^2, \quad m=t+1.
\end{equation*}
\end{proposition}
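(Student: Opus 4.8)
The goal is to exhibit, for each pair $(t,\delta)$ of positive integers, a hypersurface $X \subset \mathbb{P}^4$ of degree $d = \delta t + 1$ with exactly $k = \delta^2$ ordinary multiple points of multiplicity $m = t+1$, which fails to be factorial. The guiding principle comes from Example \ref{example:fact-2}: a hypersurface is non-factorial precisely when it contains a Weil divisor that is not an integer multiple of $\mathcal{O}_X(1)$, and the easiest way to arrange this is to make $X$ contain a plane $\Pi$. So the plan is to write the defining equation of $X$ in the form
\begin{equation*}
F = x_0 A + x_1 B = 0,
\end{equation*}
with $\Pi = \{x_0 = x_1 = 0\} \subset X$, where $A, B \in \mathbb{C}[x_0, \ldots, x_4]$ are homogeneous of degree $d-1 = \delta t$. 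Such an $X$ always contains $\Pi$ as a Weil divisor; by Proposition \ref{proposition:fact-nonsing-codim-1} together with Remark \ref{rem:factoriality}, $X$ is non-factorial as soon as it is normal (equivalently, has only isolated singularities) — because $\Pi$ cannot be rationally equivalent to a multiple of a hyperplane section, the latter being cut out by a linear form while $\Pi$ has degree $1$ but is not a complete intersection of $X$ with a hyperplane. So the real content is purely in engineering the singularities.

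First I would choose $\delta^2$ general points $p_1, \ldots, p_k$ lying on the plane $\Pi$ — note $\Pi \cong \mathbb{P}^2$ and $\delta^2 = k$ is exactly the number suggesting a $\delta \times \delta$ grid, i.e. the complete intersection of two curves of degree $\delta$ in $\Pi$. Then I would take $A$ and $B$ to be generic polynomials of degree $\delta t$ chosen so that, on the plane $\Pi$, the restrictions $A|_\Pi$ and $B|_\Pi$ are the $t$-th powers (or generic degree-$\delta t$ forms vanishing to order $t$) of two curves of degree $\delta$ whose common zero locus in $\Pi$ is exactly $\{p_1, \ldots, p_k\}$. The intended local picture at each $p_i$: since $p_i \in \Pi = \{x_0 = x_1 = 0\}$, in suitable local analytic coordinates $x_0, x_1$ are two of the coordinates and $A, B$ each vanish to order $t$ at $p_i$ when restricted to $\Pi$; hence $F = x_0 A + x_1 B$ vanishes to order $t+1 = m$ at $p_i$, with lowest-order term governed by the order-$t$ parts of $A, B$ along $\Pi$ together with the linear forms $x_0, x_1$. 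The crucial computation is to check that for a generic such choice the tangent cone $x_0 \bar A + x_1 \bar B = 0$ (with $\bar A, \bar B$ the degree-$t$ leading terms) defines a \emph{smooth} surface in $\mathbb{P}^3$, so that $p_i$ is an \emph{ordinary} $m$-ple point, and that $X$ has no singularities away from the $p_i$. This is where a dimension count / Bertini-type genericity argument enters: one must verify the linear system of hypersurfaces of degree $d$ of the shape $x_0 A + x_1 B$ with the prescribed vanishing along $\Pi$ at the $p_i$ is large enough that its generic member is smooth outside $\Sigma$ and has the desired ordinary singularities at $\Sigma$ — presumably by restricting the system to a general hyperplane section and to each exceptional $\mathbb{P}^3$, analogously to the proof of Theorem \ref{theorem:worstthannodes}.

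The main obstacle I expect is precisely this last verification: showing that the tangent cone at each $p_i$ is a cone over a smooth surface. The forms $\bar A, \bar B$ are not arbitrary degree-$t$ forms — they are the leading terms at $p_i$ of polynomials that vanish along a fixed configuration on $\Pi$ — so one cannot invoke genericity of $(\bar A, \bar B)$ blindly; one has to check that the constraints (lying on $\Pi$, multiplicity conditions forced by the grid structure) still leave enough freedom in the $2$-jet data transverse to $\Pi$. Concretely, I would parametrize: take $A = x_0 A_0 + x_1 A_1 + C^t$ and $B = x_0 B_0 + x_1 B_1 + D^t$ where $C, D$ are the two degree-$\delta$ curves in $\Pi$ cutting out the $p_i$, and $A_j, B_j$ are general of degree $\delta t - 1$; then the tangent cone at $p_i$ involves $x_0 \cdot(\text{deg-}t\text{ part of }C^t\text{ at }p_i) + x_1 \cdot(\text{deg-}t\text{ part of }D^t\text{ at }p_i)$ plus contributions from $A_j, B_j$, and because $C$ and $D$ meet transversally at $p_i$ their leading linear forms $\ell_C, \ell_D$ are independent, giving a term $x_0 \ell_C^t + x_1 \ell_D^t + (\text{general stuff})$; I would then argue that after adding the generic $A_j, B_j$ contributions this defines a smooth surface in $\mathbb{P}^3 = \mathbb{P}(\text{local coords})$ by a Jacobian-rank computation, and simultaneously that the generic global $X$ is smooth along $\Pi \setminus \Sigma$ and off $\Pi$. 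Once the singularities are confirmed to be ordinary $m$-ple points and $X$ is normal, non-factoriality follows immediately from the presence of the non-Cartier Weil divisor $\Pi$, as explained above; and one checks the numerology $k(m-1)^2 = \delta^2 t^2 = (\delta t)^2 = (d-1)^2$, matching the equality highlighted in the Introduction.
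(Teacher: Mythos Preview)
Your overall strategy---force the plane $\Pi=\{x_0=x_1=0\}$ into $X$ by writing $f=x_0A+x_1B$, and place the singularities at the $\delta^2$ intersection points of two degree-$\delta$ curves in $\Pi$---is exactly what the paper does. The gap is in your concrete parametrisation. With $A=x_0A_0+x_1A_1+C^t$ and $A_0,A_1$ \emph{general} of degree $\delta t-1$, the term $x_0^2A_0$ in $f=x_0A+x_1B$ has order exactly $2$ at each $p_i$ (since $A_0(p_i)\neq 0$ generically). So for $t\ge 2$ your hypersurface has multiplicity $2$ at the $p_i$, not $t+1$; the expression you wrote down for the ``tangent cone'' is actually the degree-$(t+1)$ part of the Taylor expansion, but it is not the leading part. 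The correction terms $x_0A_0,x_1A_1,\ldots$ must themselves vanish to order at least $t$ along $\Pi$, not just to order $1$.

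The paper enforces this by taking $F=\prod_{i=1}^t F_i(x_2,x_3,x_4)+\sum_{j\ge t}\Phi^j(x_0,x_1)\cdot(\text{monomials in }x_2,x_3,x_4)$, where each $\Phi^j$ is homogeneous of degree $j\ge t$ in $x_0,x_1$; thus every correction term already vanishes to order $\ge t$ along $\Pi$, and the multiplicity at each $p_i$ is genuinely $t+1$. A second difference worth noting: the paper does not use the $t$-th power $C^t$ but rather a product $\prod F_i$ of $t$ \emph{distinct} general members of a degree-$\delta$ pencil (and similarly $\prod G_i$). This makes the leading term at $p_i$ a product of $t$ distinct linear forms rather than $\ell_C^t$, so the tangent cone $x_0\bigl(\prod_j\ell_j+\Phi^t\bigr)+x_1\bigl(\prod_j\ell'_j+\Psi^t\bigr)=0$ is visibly a general member of a family containing smooth surfaces of degree $t+1$, and the ordinariness check is immediate. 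Your $t$-th-power variant can also be made to work once the correction terms are fixed, but the argument for smoothness of the tangent cone is less transparent.
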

\begin{proof}
Consider a general pencil of curves of degree $\delta$ in the
projective plane $\mathbb{P}^2$ with homogeneous coordinates
$[x_2: \, x_3 \, : x_4]$. Let  $F_1, \, F_2, \ldots F_t$ and $G_1,
\, G_2, \ldots G_t$ be general elements in the pencil; then the
products of the $F_i$ defines a plane curve of degree $\delta t$
with $\delta^2$ points of multiplicity $t$ at the base points
$[a_k: \, b_k: \, c_k]$ of the pencil, and similarly for the
product of the $G_i$. Next, let us define
\begin{equation} \label{eq:F-G}
\begin{split}
F&=\prod_{i=1}^t F_i(x_2, \, x_3, \, x_4) + \sum_{j=t}^{\delta t}
\bigg(\sum_{\alpha+\beta+\gamma=\delta t-j} \Phi^j_{\alpha \beta
\gamma}(x_0, \, x_1)\, x_2^{\alpha} x_3^{\beta} x_4^{\gamma}
\bigg), \\
G&=\prod_{i=1}^t G_i(x_2, \, x_3, \, x_4) + \sum_{j=t}^{\delta t}
\bigg(\sum_{\alpha+\beta+\gamma=\delta t-j} \Psi^j_{\alpha \beta
\gamma}(x_0, \, x_1)\, x_2^{\alpha} x_3^{\beta} x_4^{\gamma}\bigg),
\end{split}
\end{equation}
where $\Phi^j_{\alpha \beta \gamma}(x_0, \,x_1)$, $\Psi^j_{\alpha
\beta \gamma}(x_0, \,x_1)$ are general homogeneous forms of degree
$j$ in the variables $x_0$, $x_1$. Now, set
\begin{equation} \label{eq:x0x1FG}
f=x_0F+x_1G.
\end{equation}
We claim that the hypersurface $X=V(f) \subset \mathbb{P}^4$
(whose degree is $d=\delta t+1$) has the desired properties.
Indeed, varying $F_i$, $G_i$, $\Phi^j_{\alpha \beta \gamma}$,
$\Psi^j_{\alpha \beta \gamma}$, the polynomials $f$ define a
linear system contained in $|\mathcal{O}_{\mathbb{P}^4}(d)|$,
whose base locus is the plane $\pi$ of equations $x_0=x_1=0$.
Thus, using Bertini's theorem one easily deduces that the only
singular points of the general hypersurface $X$ constructed in
this way are the $\delta^2$ points $[0:\, 0: \, a_k: \, b_k: \,
c_k] \in \pi$. Moreover $X$ is obviously not factorial, since $\pi
\subset X$.

It remains only to show that each $p_i \in X$ is an ordinary
$(t+1)$-ple point. Up to a linear change of coordinates involving
only $x_2$, $x_3$ and $x_4$ we may assume  $p_i=[0:\, 0: \, 1: \, 0:
\, 0]$. Write
\begin{equation*}
\begin{split}
\prod_{i=1}^t F_i(1, \, x_3, \, x_4) &= \prod_{i=1}^t (u_{3, i} \,
x_3
+ u_{4,i}\, x_4)+ \textrm{higher order terms}, \\
 \prod_{i=1}^t G_i(1, \, x_3, \, x_4) &= \prod_{i=1}^t (v_{3,
i}\, x_3 + v_{4,i}\, x_4)+ \textrm{higher order terms},
\end{split}
\end{equation*}
where $u_{3, i}, \, u_{4, i}, v_{3, i}, \, v_{4, i} \in \mathbb{C}$.
Since the variable $x_2$ appears in the expression of the second summands
of $F$ and $G$
with exponent at most $t(\delta-1)$, the equation of the tangent
cone of $X$ at $p_i$ is given by
\begin{equation*}
 x_0 \bigg(\prod_{i=1}^t (u_{3, i}\, x_3 + u_{4,i}\, x_4) +
 \Phi^t_{t(\delta-1) \, 0 \,0} (x_0, \, x_1)\bigg) + x_1
 \bigg(\prod_{i=1}^t (v_{3, i} \, x_3 + v_{4,i}\, x_4) +
 \Psi^t_{t(\delta-1) \, 0 \,0}(x_0, \, x_1)\bigg)=0
\end{equation*}
and for a general choice of the parameters this defines a cone
over a smooth surface of degree $t+1$ in $\mathbb{P}^3$. Then the
proof is complete.
\end{proof}

Observe that all the examples in Proposition
\ref{prop:non-factorial} satisfy
\begin{equation*}
k(m-1)^2=(d-1)^2.
\end{equation*}
On the other hand, in \cite{Sab04} it is proven that if the
singular locus of $X $ consists of $k_2$ ordinary double points
and $k_3$ ordinary triple points and if $k_2 + 4 k_3 < (d -1)^2$,
then any smooth surface contained in $X$ is a complete
intersection in $X$. Motivated by this result, we make the
following conjecture, which generalizes the theorem of Ciliberto,
Di Gennaro and Cheltsov stated in the Introduction.

\begin{conjecture} \label{conjecture:factoriality}
Let $X \subset \mathbb{P}^4$ be a hypersurface of degree $d$, whose
singular locus consists of $k$ ordinary multiple points $p_1,
\ldots, p_k$ of multiplicity $m_1, \ldots, m_k$. If
\begin{equation} \label{eq:ineq-fact}
\sum_{i=1}^k (m_i-1)^2 < (d-1)^2
\end{equation}
then $X$ is factorial.
\end{conjecture}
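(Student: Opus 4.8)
The plan is to reduce the conjecture, exactly as in the proof of Theorem~A, to the vanishing of the \emph{defect} $\delta(X):=b_4(X)-1$ of $X$: by Proposition~\ref{proposition:factorial} it suffices to show that the numerical hypothesis $\sum_{i=1}^k(m_i-1)^2<(d-1)^2$ forces $b_4(X)=1$. What one \emph{cannot} do is simply invoke the Lefschetz theorem for hyperplane sections, because the divisor $\widetilde X=dH-\sum_{i=1}^k m_iE_i$ on $\widetilde{\mathbb{P}}^4$ need not be ample in the range of the conjecture — the converse of Theorem~\ref{theorem:principal} genuinely fails, as Remark~\ref{rem:theA} shows — so the defect has to be computed directly in terms of the singular points.

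\textbf{Step 1: a defect formula.} The first step would be to establish a formula of the shape
\begin{equation*}
\delta(X)=h^1\bigl(\mathbb{P}^4,\,\mathcal{I}_Z(2d-5)\bigr),
\end{equation*}
where $Z\subset\mathbb{P}^4$ is a zero-dimensional subscheme supported on $\Sigma=\{p_1,\ldots,p_k\}$ whose local structure at each ordinary $m_i$-ple point is intrinsic to the singularity. For a node ($m_i=2$) one recovers the classical formula with $Z$ reduced, as recalled in the Remark following Proposition~\ref{proposition:factorial}; for an ordinary triple point the expected local length is $4=(3-1)^2$, consistent with the result of \cite{Sab04} quoted above, and in general one expects $\deg Z=\sum_{i=1}^k(m_i-1)^2$ when the points are in general position. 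Proving such a formula would mean analysing the mixed Hodge structure on $H^4(X)$ — equivalently, pushing further the exact sequences relating $H^\ast(X)$, $H^\ast(\widetilde X)$ and $H^\ast(\mathcal E_i)$ already used in the proof of Proposition~\ref{proposition:defect} — and matching the outcome against the graded pieces of the Jacobian (Milnor) algebra $\mathbb{C}[x_0,\ldots,x_4]/J_f$, along the lines of \cite{dimca:sing}. This identification is, I expect, the main obstacle: the Jacobian scheme of $X$ at $p_i$ has length $(m_i-1)^4$, strictly larger than the expected $(m_i-1)^2$ (the two coincide only in the nodal case), so one must isolate, by a truncation or a Macaulay-type duality argument, the precise subquotient of the Milnor algebra that actually contributes to $H^4$.

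\textbf{Step 2: independent conditions.} The second step would be to show that, under the numerical hypothesis, the scheme $Z$ imposes independent conditions on the forms of degree $2d-5$, i.e. $h^1(\mathcal{I}_Z(2d-5))=0$. Here one would follow the liaison- and Cayley--Bacharach-theoretic strategy used for nodes in \cite{CdG04}, \cite{Ch10} and \cite{ch-sb}: the defect scheme of a hypersurface of degree $d$ in $\mathbb{P}^4$ satisfies a Cayley--Bacharach condition relative to forms of degree $2d-5$, and a Cayley--Bacharach-type lemma then forces any subscheme of degree $<(d-1)^2$ with this property either to impose independent conditions in degree $2d-5$ or to lie on a subvariety of $\mathbb{P}^4$ of very small degree. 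In the strict-inequality range the degenerate alternative — which is precisely the mechanism behind the non-factorial examples of Proposition~\ref{prop:non-factorial}, where $k(m-1)^2=(d-1)^2$ and the singular points are forced onto a plane contained in $X$ — cannot occur, so $h^1(\mathcal{I}_Z(2d-5))=0$ and hence $b_4(X)=1$.

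\textbf{Remaining difficulties.} Beyond the identification of the defect scheme, the delicate points are: proving the Cayley--Bacharach property for the fatter schemes $Z$ attached to higher ordinary multiple points (where self-linkage becomes a statement about Artinian quotients rather than about a finite set of reduced points), and carrying out the classification of the extremal configurations in the equality case $\sum(m_i-1)^2=(d-1)^2$ finely enough to exclude them under the strict inequality. An alternative route, closer in spirit to Theorem~A, would be to replace ``ample'' by a weaker positivity of $\widetilde X=dH-\sum m_iE_i$ and to deduce the surjectivity of $H^2(\widetilde{\mathbb{P}}^4,\mathbb{C})\to H^2(\widetilde X,\mathbb{C})$ from a vanishing theorem calibrated to the bound $\sum(m_i-1)^2<(d-1)^2$; but since ampleness can genuinely fail while factoriality holds, this would require a tool considerably finer than the hyperplane-section Lefschetz theorem, and it is far from clear that it would reproduce the precise threshold $(d-1)^2$.
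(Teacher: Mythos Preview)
The statement you are attempting to prove is labelled in the paper as a \emph{conjecture}, and the paper does not prove it. The only thing the paper establishes about it is the single sentence following its statement: ``Theorem~\ref{theorem:few-points} shows that Conjecture~\ref{conjecture:factoriality} is true for $k=1$.'' The authors explicitly write ``We hope to come back on this problem in a sequel to this paper.'' So there is no proof in the paper against which to compare your proposal, and your proposal is itself not a proof but an outline of a strategy with, as you candidly note, several unresolved steps.

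That said, your outline is a reasonable description of how one might try to attack the problem, and it is consistent with the evidence the paper assembles: the nodal defect formula recalled in the Remark after Proposition~\ref{proposition:factorial}, the result of \cite{Sab04} for mixtures of ordinary double and triple points, and the non-factorial examples of Proposition~\ref{prop:non-factorial} satisfying $k(m-1)^2=(d-1)^2$. But the two main steps you identify --- producing a defect scheme $Z$ of degree $\sum(m_i-1)^2$ such that $\delta(X)=h^1(\mathcal{I}_Z(2d-5))$, and then proving the requisite Cayley--Bacharach/independent-conditions statement for such fat schemes --- are genuine open problems, not gaps you could close with more care. In particular, your own observation that the Jacobian scheme at an ordinary $m$-ple point has length $(m-1)^4$ rather than $(m-1)^2$ pinpoints why Step~1 is not a routine extension of the nodal case. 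As written, your text should be read as a research programme towards the conjecture, not as a proof; the paper offers no more than that either.
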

Theorem \ref{theorem:few-points} shows that Conjecture
\ref{conjecture:factoriality} is true for $k=1$.

\section{Appendix}

In the present Appendix we give a different proof of Theorem
\ref{theorem:few-points} in the case $k=1$ (i.e. for hypersurfaces
with only one singularity) by applying some results from
\cite{dimca:sing}. More precisely, we make use of the following

\begin{lemma} \label{lemma:mudet}
    Let $(X, \,0)$ be a germ of an affine hypersurface in $ \mathbb{C}^4$,
    defined by the polynomial $f$,
    such that $X$ has an isolated singularity of multiplicity $m$ at
    $0$. If the
    singularity is ordinary, then
    \begin{equation*}
      \mu \textrm{-}\det(X,0)=m,
    \end{equation*}
where $\mu \textrm{-} \det(X,\,0)$ denotes the smallest positive
integer $s$ such that the family $f_t=f+th$, $t \in [0,
\,\varepsilon)$ is $\mu$-constant for any germ $h$ vanishing of
order $s$ at $0$ and $\varepsilon >0$ small enough  $($here $\mu$
is the Milnor number and $\varepsilon$ may depend on h$)$.
\end{lemma}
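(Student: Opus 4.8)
The plan is to compute the $\mu$-determinacy of an ordinary $m$-ple point by exploiting the homogeneity of the tangent cone. Write $f = f_m + f_{m+1} + \cdots$ for the decomposition of $f$ into homogeneous pieces, where $f_m$ defines the tangent cone. By hypothesis the projective hypersurface $V(f_m) \subset \mathbb{P}^3$ is smooth, which is equivalent to saying that the affine cone $V(f_m) \subset \mathbb{C}^4$ has an isolated singularity at the origin, i.e.\ the only common zero of $\partial f_m / \partial x_0, \ldots, \partial f_m / \partial x_3$ is the origin. Since the partials $\partial f_m/\partial x_j$ are homogeneous of degree $m-1$, this means the ideal they generate contains a power of the maximal ideal $\mathfrak{m} = (x_0, x_1, x_2, x_3)$; more precisely, by the graded Nullstellensatz together with a Koszul/regular-sequence argument one gets that $\mathfrak{m}^{4(m-2)+1}$, and in fact a much smaller power suffices, lies in the Jacobian ideal $J(f_m)$. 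The key quantitative fact is the classical one (see e.g.\ the discussion of quasihomogeneous and weighted-homogeneous singularities in \cite{dimca:sing}): for a homogeneous polynomial $f_m$ of degree $m$ with isolated critical point, one has $\mathfrak{m}^{m} \subset \mathfrak{m}\cdot J(f_m)$, because the Euler relation $m f_m = \sum x_j \partial f_m/\partial x_j$ already shows $f_m \in \mathfrak{m}\cdot J(f_m)$ and hence $\mathfrak{m}^{m} = \mathfrak{m}^{m-1}\cdot\mathfrak{m}$ maps into the socle-controlled ideal.

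From here I would invoke the standard finite-determinacy criterion: a germ $f$ is $k$-determined (up to right equivalence, and in particular $\mu$-constant under addition of anything in $\mathfrak{m}^{k+1}$) as soon as $\mathfrak{m}^{k+1} \subset \mathfrak{m}^2 \cdot J(f)$. The Euler relation computation above, applied to the \emph{lowest} order term $f_m$ and then propagated to $f$ itself by the fact that $J(f) \equiv J(f_m) \pmod{\mathfrak{m}^m}$, yields $\mathfrak{m}^{m} \subset \mathfrak{m}\cdot J(f)$, hence $\mathfrak{m}^{m+1} \subset \mathfrak{m}^2 \cdot J(f)$. This shows $f$ is $m$-determined, so that adding any germ $h \in \mathfrak{m}^{m+1}$ keeps $\mu$ constant; equivalently $\mu\textrm{-}\det(X,0) \le m$. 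This gives the inequality $\mu\textrm{-}\det(X,0) \le m$.

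For the reverse inequality $\mu\textrm{-}\det(X,0) \ge m$, I would show that adding a \emph{generic} homogeneous perturbation $h$ of degree exactly $m-1$ (not $m$) can change the Milnor number, so $s = m-1$ does not work and $\mu\textrm{-}\det(X,0) > m-1$. Concretely: for generic $h$ of degree $m-1$, the lowest-degree part of $f + th$ is $th_{m-1} + f_m$ only if we are careless — rather the point is that $f+th$ acquires a term of degree $m-1$, lowering the multiplicity from $m$ to $m-1$, and a drop in multiplicity forces a strict drop in the Milnor number (for instance by the Teissier/Lê–Greuel inequalities relating $\mu$ to the multiplicity, or simply because $\mu \ge (m-1)^4$ for an isolated singularity of multiplicity $m$ in $\mathbb{C}^4$ while a multiplicity $m-1$ point has smaller such lower bound, contradicting $\mu$-constancy if one also checks the family is not $\mu$-constant by an explicit example such as $f_m = x_0^m + x_1^m + x_2^m + x_3^m$ and $h = x_0^{m-1}$). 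Combining the two inequalities gives $\mu\textrm{-}\det(X,0) = m$.

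The main obstacle I anticipate is making the reverse inequality fully rigorous: "adding a lower-degree term changes $\mu$" is intuitively clear but needs a clean argument that a \emph{specific} admissible $h$ of order $m-1$ destroys $\mu$-constancy, since $\mu\textrm{-}\det$ is defined by a universal quantifier over all such $h$ and we only need \emph{one} bad $h$ — so in fact we want to exhibit an explicit $h$ vanishing to order $m-1$ (but not $m$) for which $f + th$ is not $\mu$-constant. The cleanest route is to take the Fermat-type model $f_m = \sum x_j^m$ for the tangent cone (allowable since the statement only depends on $f_m$ up to the relevant order and all smooth $V(f_m)$ behave the same way here) and $h = x_0^{m-1}$, compute $\mu(f_m) = (m-1)^4$ directly, and observe $\mu(f_m + t x_0^{m-1})$ is smaller for $t \ne 0$ by an elementary Milnor-algebra computation; this isolates the only genuinely hands-on part of the argument.
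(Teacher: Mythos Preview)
Your approach to the upper bound $\mu\textrm{-}\det(X,0)\le m$ via finite determinacy does not work as written. The key claim $\mathfrak{m}^m\subset\mathfrak{m}\cdot J(f_m)$ is false: for the Fermat cubic $f_3=x_0^3+x_1^3+x_2^3+x_3^3$ one has $J(f_3)=(x_0^2,\ldots,x_3^2)$, and the monomial $x_0x_1x_2\in\mathfrak{m}^3$ is not in $\mathfrak{m}\cdot J(f_3)$. The Euler relation only gives $f_m\in\mathfrak{m}\cdot J(f_m)$, a single element, not all of $\mathfrak{m}^m$; the ``hence $\mathfrak{m}^m=\mathfrak{m}^{m-1}\cdot\mathfrak{m}$ maps into the socle-controlled ideal'' step is a non sequitur. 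In fact the socle of $\mathbb{C}[x]/J(f_m)$ sits in degree $4(m-2)$, so the best general inclusion is $\mathfrak{m}^{4m-7}\subset J(f_m)$, far weaker than what you need. There is also an off-by-one slip: $m$-determinacy means adding $h\in\mathfrak{m}^{m+1}$ preserves the right-equivalence class, which only yields $\mu\textrm{-}\det\le m+1$, not $\le m$.

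The paper's proof bypasses determinacy entirely and instead uses the semiquasihomogeneous (SQH) structure. Since $f_m$ has an isolated critical point, $f$ is SQH with principal part $f_m$, and the standard SQH formula gives $\mu(f)=\mu(f_m)=(m-1)^4$. For any $h\in\mathfrak{m}^m$ and $t$ small, the new leading term $f_m+th_m$ still has an isolated critical point (open condition), so $f+th$ is again SQH with $\mu(f+th)=\mu(f_m+th_m)=(m-1)^4$; this gives $\mu\textrm{-}\det\le m$ directly. For the lower bound the paper does what you suggest first (before the Fermat digression): take a \emph{general} homogeneous $h$ of degree $s<m$, so $f+th$ is SQH with principal part $th$ for $t\neq 0$, whence $\mu(f+th)=(s-1)^4<(m-1)^4$. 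Your Fermat reduction is unnecessary and, as you half-acknowledge, not obviously legitimate for the given $f$; the generic-$h$ argument already handles arbitrary $f$.
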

\begin{proof}
If $\deg (f)=d$ we can write
    \begin{equation*}
      f = f_m+f_{m+1}+\ldots + f_d,
    \end{equation*}
    where $f_i$ is the homogeneous piece of degree $i$. Since $0\in X$ is an ordinary singularity, the tangent cone
    $V(f_m) \subset \mathbb{C}^4$  has an isolated singularity at the
    origin; then $f$ defines a so called \emph{semiquasihomogeneous} (SQH) \emph{hypersurface
    singularity} with principal part $f_m$, see \cite[p. 123]{GLS07}. So by \cite[pag.~74]{dimca:sing}
    or \cite[Corollary 2.18]{GLS07} we obtain
\begin{equation*}
    \mu(f) = \mu (f_m)=(m-1)^4.
\end{equation*}
    Let $h$ be a general homogeneous polynomial of degree $s$ with $s <
    m$. For $t \neq 0$ the polynomial $f_t=f + th$ defines a SQH
    hypersurface singularity with principal part $th$, hence
    \begin{equation*}
\mu(f+th)=\mu(th) = (h-1)^4 < (m-1)^4 = \mu(f)
\end{equation*}
and the family $f_t$ is \emph{not} $\mu$-constant. Therefore we have
$ \mu \textrm{-}\det(X,0)\geq m$.

It remains to show that $\mu \textrm{-} \det(X,0)\leq m$. Let $h$
be a germ vanishing of order $m$ at the
    origin; then we want to prove that $\mu(f+th)=\mu(f)$ if $t$ is small enough.
    We have
    \begin{equation*}
      f+th=(f_m+th_m)+g,
    \end{equation*}
    where all the monomials appearing in $g$ have degree at least $m+1$. Moreover,
    if $t$ is small enough, $V(f_m+th_m)$ has an isolated singularity
at the origin.
    Hence $f+th$ defines a SQH hypersurface
    singularity with principal part $f_m+th_m$, so
    \begin{equation*}
      \mu (f+th)= \mu (f_m+t h_m)=(m-1)^4=\mu(f).
   \end{equation*}
This completes the proof.
   \end{proof}

Now let $Y \subset \mathbb{P}^n$ be a smooth, complete intersection
fourfold and $X \subset Y$  be a reduced, irreducible threefold,
which is complete intersection of $Y$ with a hypersurface of degree
$d$. Assume that the singular locus of $X$ consists precisely of one
ordinary multiple points $p$ of multiplicity $m < d$. We want to
show that $X$ is factorial. By Lemma \ref{lemma:mudet}, the
assumption
  $m < d$ becomes
  \begin{equation*}
    m= \mu\textrm{-}\det (X,p)<d,
  \end{equation*}
  then by \cite[Theorem 4.17 p. 214]{dimca:sing} we obtain $\Delta_X=1$, where
  $ \Delta_X$ is the Alexander polynomial of $X$. By \cite[p. 146 and p.~206]{dimca:sing}
  this in turn implies
   $H^4_0(X)=0$,
  where $H_0$ stands for the primitive cohomology, namely
  \begin{equation*}
    H_0^4(X)= \textrm{coker} \{ H^4( \mathbb{P}^4, \, \mathbb{C}) \to H^4(X,
    \, \mathbb{C}) \}.\
  \end{equation*}
  In other words, the restriction map $H^4( \mathbb{P}^4, \, \mathbb{C}) \to H^4(X,
    \, \mathbb{C})$ is surjective, hence $b_4(X) \leq 1$. Since the class of the
  hyperplane section of $X$ is certainly
  nonzero in $H^4(X, \, \mathbb{C})$, it follows
   $b_4(X)=1$, thus $X$ is factorial by Proposition
   \ref{proposition:factorial}.

\def\cprime{$'$}

\bigskip
\bigskip

Francesco Polizzi, \\ Dipartimento di Matematica, Universit\`{a}
della Calabria, Cubo 30B, 87036 \\ Arcavacata di Rende, Cosenza (Italy)\\
\emph{E-mail address:} \verb|polizzi@mat.unical.it| \\ \\

Antonio Rapagnetta, \\ Dipartimento di Matematica, Universit\`{a}
di Roma 2 Tor Vergata \\
Via Della Ricerca Scientifica 1, 00133 Roma (Italy) \\
\emph{E-mail address:} \verb|rapagnet@axp.mat.uniroma2.it| \\ \\

Pietro Sabatino, \\ Dipartimento di Matematica, Universit\`{a}
della Calabria, Cubo 30B, 87036 \\ Arcavacata di Rende, Cosenza (Italy)\\
 \emph{E-mail address:} \verb|pietrsabat@gmail.com|


\begin{thebibliography}{99999}

\bibitem[Bal99]{ballico:ample}
E.~Ballico.
\newblock Ample divisors on the blow up of {$\bold P^n$} at points.
\newblock {\em Proc. Amer. Math. Soc.}, 127(9):2527--2528, 1999.

\bibitem[BCR98]{bcr:realag}
Jacek Bochnak, Michel Coste, and Marie-Fran{\c{c}}oise Roy.
\newblock {\em Real algebraic geometry}, volume~36 of {\em Ergebnisse der
  Mathematik und ihrer Grenzgebiete (3) [Results in Mathematics and Related
  Areas (3)]}.
\newblock Springer-Verlag, Berlin, 1998.
\newblock Translated from the 1987 French original, Revised by the authors.

\bibitem[BS81]{BiSt81}
J{\"u}rgen Bingener and Uwe Storch.
\newblock Zur {B}erechnung der {D}ivisorenklassengruppen kompletter lokaler
  {R}inge.
\newblock {\em Nova Acta Leopoldina (N.F.)}, 52(240):7--63, 1981.
\newblock Leopoldina Symposium: Singularities (Th{\"u}ringen, 1978).

\bibitem[BS95]{bs:av}
Mauro~C. Beltrametti and Andrew~J. Sommese.
\newblock {\em The adjunction theory of complex projective varieties},
  volume~16 of {\em de Gruyter Expositions in Mathematics}.
\newblock Walter de Gruyter \& Co., Berlin, 1995.

\bibitem[CDG04a]{CdG04}
Ciro Ciliberto and Vincenzo Di~Gennaro.
\newblock Factoriality of certain hypersurfaces of {$\bold P^4$} with ordinary
  double points.
\newblock In {\em Algebraic transformation groups and algebraic varieties},
  volume 132 of {\em Encyclopaedia Math. Sci.}, pages 1--7. Springer, Berlin,
  2004.

\bibitem[CDG04b]{cd:ci}
Ciro Ciliberto and Vincenzo Di~Gennaro.
\newblock Factoriality of certain threefolds complete intersection in {$\bold
  P^5$} with ordinary double points.
\newblock {\em Comm. Algebra}, 32(7):2705--2710, 2004.

\bibitem[Che10a]{ch-sb}
I.~A. Cheltsov.
\newblock On a conjecture of {C}iliberto.
\newblock {\em Sb. Math.}, 201(7-8):1069--1090, 2010.

\bibitem[Che10b]{Ch10}
Ivan Cheltsov.
\newblock Factorial threefold hypersurfaces.
\newblock {\em J. Algebraic Geom.}, 19(4):781--791, 2010.

\bibitem[Cut88]{Cut88}
Steven Cutkosky.
\newblock Elementary contractions of {G}orenstein threefolds.
\newblock {\em Math. Ann.}, 280(3):521--525, 1988.

\bibitem[Cyn01]{Cy01}
S{\l}awomir Cynk.
\newblock Defect of a nodal hypersurface.
\newblock {\em Manuscripta Math.}, 104(3):325--331, 2001.

\bibitem[Dao12]{Dao12}
Hailong Dao.
\newblock Picard groups of punctured spectra of dimension three local
  hypersurfaces are torsion-free.
\newblock {\em Compos. Math.}, 148(1):145--152, 2012.

\bibitem[Deb01]{De01}
Olivier Debarre.
\newblock {\em Higher-dimensional algebraic geometry}.
\newblock Universitext. Springer-Verlag, New York, 2001.

\bibitem[Dim92]{dimca:sing}
Alexandru Dimca.
\newblock {\em Singularities and topology of hypersurfaces}.
\newblock Universitext. Springer-Verlag, New York, 1992.

\bibitem[Eis94]{Eis94}
David Eisenbud.
\newblock {\em Commutative algebra with a view toward algebraic geometry}.
\newblock Springer-Verlag, New York, 1994.
\newblock Graduate Texts in Mathematics, No. 150.

\bibitem[Fos73]{Fo73}
Robert~M. Fossum.
\newblock {\em The divisor class group of a {K}rull domain}.
\newblock Springer-Verlag, New York, 1973.
\newblock Ergebnisse der Mathematik und ihrer Grenzgebiete, Band 74.

\bibitem[GLS07]{GLS07}
G.-M. Greuel, C.~Lossen, and E.~Shustin.
\newblock {\em Introduction to singularities and deformations}.
\newblock Springer Monographs in Mathematics. Springer, Berlin, 2007.

\bibitem[GM99]{GM99}
S.~I. Gelfand and Yu.~I. Manin.
\newblock {\em Homological algebra}.
\newblock Springer-Verlag, Berlin, 1999.
\newblock Translated from the 1989 Russian original by the authors, Reprint of
  the original English edition from the series Encyclopaedia of Mathematical
  Sciences [{{\i}t Algebra, V}, Encyclopaedia Math. Sci., 38, Springer, Berlin,
  1994; MR1309679 (95g:18007)].

\bibitem[Gro68]{SGA2}
Alexander Grothendieck.
\newblock {\em Cohomologie locale des faisceaux coh\'erents et th\'eor\`emes de
  {L}efschetz locaux et globaux {$(SGA$} {$2)$}}.
\newblock North-Holland Publishing Co., Amsterdam, 1968.
\newblock Augment{\'e} d'un expos{\'e} par Mich{\`e}le Raynaud, S{\'e}minaire
  de G{\'e}om{\'e}trie Alg{\'e}brique du Bois-Marie, 1962, Advanced Studies in
  Pure Mathematics, Vol. 2.

\bibitem[Har70]{Ha70}
Robin Hartshorne.
\newblock {\em Ample subvarieties of algebraic varieties}.
\newblock Notes written in collaboration with C. Musili. Lecture Notes in
  Mathematics, Vol. 156. Springer-Verlag, Berlin, 1970.

\bibitem[Har77]{Ha77}
Robin Hartshorne.
\newblock {\em Algebraic geometry}.
\newblock Springer-Verlag, New York, 1977.
\newblock Graduate Texts in Mathematics, No. 52.

\bibitem[HP13]{HaPo13}
Robin Hartshorne and Claudia Polini.
\newblock Divisors class groups of singular surfaces.
\newblock {\em e-print arXiv:1301.3222}, 2013.

\bibitem[Kos09]{kosta:ci}
D.~Kosta.
\newblock Factoriality of complete intersections in {$\Bbb P^5$}.
\newblock {\em Tr. Mat. Inst. Steklova}, 264(Mnogomernaya Algebraicheskaya
  Geometriya):109--115, 2009.

\bibitem[Laz04]{lazarsfeld:p1}
Robert Lazarsfeld.
\newblock {\em Positivity in algebraic geometry. {I}}, volume~48 of {\em
  Ergebnisse der Mathematik und ihrer Grenzgebiete. 3. Folge. A Series of
  Modern Surveys in Mathematics [Results in Mathematics and Related Areas. 3rd
  Series. A Series of Modern Surveys in Mathematics]}.
\newblock Springer-Verlag, Berlin, 2004.
\newblock Classical setting: line bundles and linear series.

\bibitem[Lip75]{Lip75}
Joseph Lipman.
\newblock Unique factorization in complete local rings.
\newblock In {\em Algebraic geometry ({P}roc. {S}ympos. {P}ure {M}ath., {V}ol.
  29, {H}umboldt {S}tate {U}niv., {A}rcata, {C}alif., 1974)}, pages 531--546.
  Amer. Math. Soc., Providence, R.I., 1975.

\bibitem[Mat89]{Mat89}
Hideyuki Matsumura.
\newblock {\em Commutative ring theory}, volume~8 of {\em Cambridge Studies in
  Advanced Mathematics}.
\newblock Cambridge University Press, Cambridge, second edition, 1989.
\newblock Translated from the Japanese by M. Reid.

\bibitem[Mel04]{Mel04}
Massimiliano Mella.
\newblock Birational geometry of quartic 3-folds. {II}. {T}he importance of
  being {$\Bbb Q$}-factorial.
\newblock {\em Math. Ann.}, 330(1):107--126, 2004.

\bibitem[Rei87]{Re87}
Miles Reid.
\newblock Young person's guide to canonical singularities.
\newblock In {\em Algebraic geometry, {B}owdoin, 1985 ({B}runswick, {M}aine,
  1985)}, volume~46 of {\em Proc. Sympos. Pure Math.}, pages 345--414. Amer.
  Math. Soc., Providence, RI, 1987.

\bibitem[Rob76]{Rob76}
Lorenzo Robbiano.
\newblock Some properties of complete intersections in ``good'' projective
  varieties.
\newblock {\em Nagoya Math. J.}, 61:103--111, 1976.

\bibitem[Sab05]{Sab04}
Pietro Sabatino.
\newblock Some remarks on factoriality of certain hypersurfaces in {$\Bbb
  P^4$}.
\newblock {\em Arch. Math. (Basel)}, 84(3):233--238, 2005.

\bibitem[Sab09]{Sab09}
Pietro Sabatino.
\newblock Examples of factorial threefolds complete intersection in {$\Bbb
  P^5$} with many singularities.
\newblock {\em Ric. Mat.}, 58(2):285--297, 2009.

\end{thebibliography}
\end{document}